\newcolumntype{C}[1]{>{\centering\arraybackslash}m{#1}}
\theoremstyle{definition}
\newtheorem{theorem}{Theorem}[section]
\newtheorem{corollary}[theorem]{Corollary}
\newtheorem{lemma}[theorem]{Lemma}
\newtheorem{definition}[theorem]{Definition}
\newtheorem{proposition}[theorem]{Proposition}
\newtheorem{remark}[theorem]{Remark}
\newtheorem{example}[theorem]{Example}
\newcommand{\Z}				{\mathbb{Z}}
\newcommand{\R}				{\mathbb{R}}
\newcommand{\F}				{\mathbb{F}}
\newcommand{\Id}            {\mathrm{Id}}
\newcommand{\CurA}			{\mathcal{A}}
\newcommand{\CurB}			{\mathcal{B}}
\newcommand{\CurL}			{\mathcal{L}}
\newcommand{\CurO}			{\mathcal{O}}
\newcommand{\CurP}			{\mathcal{P}}
\newcommand{\bP}             {\textbf{P}}
\DeclareMathOperator{\Aut}		{Aut}
\date{}
\begin{document}


\title{Successive Minima, Determinant and Automorphism Groups of Hyperelliptic Function Field Lattices}
\author{Lilian Menn\thanks{Master Study at the Institute of Mathematics, University of Zürich, email: lilian.menn@uzh.ch} \and Elif Sa\c{c}\i kara\thanks{IMSV, University of Bern, email: elif.sacikarakariksiz@unibe.ch}}

\maketitle

\begin{abstract}
In this paper, we contribute to previously known results on lattices constructed by algebraic function fields, or function field lattices in short. First, motivated by the non-well-roundedness property of certain hyperelliptic function field lattices \cite{atecs2016note}, we explore the successive minima of these lattices in detail. We also study the determinant of hyperelliptic function field lattices. Finally, we show a connection between the automorphism groups of algebraic function fields and function field lattices, based on ideas from \cite{bottcher2016lattices}.
\end{abstract}

\noindent\small{\textbf{Keywords:} Algebraic Function Field Lattices, Automorphism Group, Determinant, Elliptic and Hyperelliptic Function Fields, Successive Minima}

\section{Introduction}

The idea of constructing lattices from function fields was introduced by Rosenbloom and Tsfasman \cite{rosenbloom1990multiplicative} around 1990, and independently by Quebbemann \cite{quebbemann1989lattices}, to create families of asymptotically good lattices. These lattices are structured from subgroups of the group of principal divisors of a given function field, with the lattice vectors corresponding to elements of the function field having specific zeros and poles.

Over the course of the two decades following their construction 
almost no research was done on these lattices. Starting in $2014$ function field lattices were investigated in  
\cite{fukshansky2014lattices}. In this paper, the lattices from elliptic function fields were studied, including properties such as their determinant, kissing number, covering 
radius and the fact that they are well-rounded. Adding on to this, 
in \cite{sha2015lattices} these lattices were shown to have 
a basis made up of minimal vectors.  
Lattices from Hermitian function fields and many of their properties, including their automorphisms, were studied in \cite{bottcher2016lattices}.
A notable distinction was made in, \cite{atecs2016note}, where it was shown that certain hyperelliptic function field lattices are not well-rounded.

Firstly, this paper builds on these previous works by investigating the successive minima of hyperelliptic function field lattices that are not well-rounded. Specifically, we analyze these lattices in terms of their genus, defining polynomial, and the sets of places used in their construction. A significant challenge lies in understanding principal divisors with support on specific sets of places.

We also compute the determinant of certain hyperelliptic function field lattices, a topic previously unexplored. Our findings suggest that, under certain assumptions, some hyperelliptic function field lattices have higher densities than others. For lattices constructed from rational places, even determining a basis is difficult, partly due to the complex structure of the zero class group. This complexity makes the precise computation of the determinant more difficult to study.

Finally, much of this paper is dedicated to studying the automorphism groups of function field lattices. We aim to understand whether automorphisms of function fields induce automorphisms of the lattices constructed from them. While the automorphism groups of function fields and lattices are generally not equivalent, they are related. As an example, we show that automorphisms of the rational function field induce a subgroup of the automorphism group of the corresponding rational function field lattice. We conduct a similar analysis for elliptic and hyperelliptic function field lattices.

To study these automorphisms, we use two approaches: (1) analyzing automorphisms arising from the function field itself and (2) identifying additional automorphisms not directly related to the function field. The automorphisms we discover typically act as permutations of the coordinates of the function field lattice. 

The structure of this paper is as follows. In Preliminaries, we briefly review lattice constructions from algebraic function fields and previously known results. Next, we explore the successive minima of certain non-well-rounded function field lattices. In Section \ref{sec:Determinant}, we compute the determinant of some hyperelliptic function field lattices, providing the necessary background from algebraic function fields. Finally, in Section \ref{sec:Automorphisms}, we explore the connection between the automorphism groups of algebraic function fields and function field lattices, providing known results about the automorphism groups of both algebraic function fields and lattices. For the sake of completeness, we present the previously known results on lattices from finite abelian groups and Hermitian function field lattices in separate subsections.

\section{Preliminaries}\label{sec:preliminaries}

In this section, we introduce the construction and structure of function field lattices. For consistency, we use the notation and introduction given in
\cite{atecs2017lattices, lilymaster}.\\
Let $F/\mathbb{F}_q$ be an algebraic function field and let a set $\mathcal{P}$ denote $n+1$ distinct places;
\begin{align*}
    \mathcal{P} = 
\{ P_0, \dots, P_n\} \subset \mathbb{P_F}.
\end{align*}

Then correspondingly, we define the set $\mathcal{O}_{\mathcal{P}}^* \coloneqq 
\{ z \in F \: | \: \mathrm{supp}(z) \subseteq \mathcal{P} \}$.
It is easy to see that this set forms an abelian group with respect to field multiplication. In the literature, $\mathcal{P}$ generally consists of only rational places, but this is not necessary for the construction.

\begin{definition}\label{defn:FuncFieldLattice}
    Given an $(n+1)$-tuple $\mathcal{P}$ of places of a 
    function field $F/\mathbb{F}_q$, the associated \textbf{function 
    field lattice} $\mathcal{L}_\mathcal{P}$ is defined as the image of the map 
    \begin{align}\label{ConstructionFFLatticeMap}
       \Phi_\mathcal{P} : \mathcal{O}_{\mathcal{P}}^* 
    &\rightarrow \mathbb{R}^n \nonumber\\
 z &\mapsto (v_{P_0}(z)\cdot \mathrm{deg}(P_0), 
    \dots , v_{P_n}(z) \cdot \mathrm{deg}(P_n) )\nonumber. 
    \end{align}
\end{definition}

We note that the map $\Phi_\mathcal{P}$ is a group homomorphism. Indeed, for any two elements $z_1, z_2$ in $\mathcal{O}_{\mathcal{P}}^*$ we have 
$\Phi_\mathcal{P}(z_1 \cdot z_2 ) = \Phi_\mathcal{P}(z_1) + \Phi_\mathcal{P}(z_2)$. This becomes more obvious when 
comparing the lattice to the additive group of principal divisors with support in $\mathcal{P}$. 
It can be easily shown that a function field lattice $\mathcal{L}_\mathcal{P}$ has rank $n$ and is a sublattice of the root lattice $\CurA_n= \{ v \in \Z^{n+1} 
    \: \mid \: \sum^{n}_{i=0} v_i = 0 \}$ \cite{atecs2016note}.

The following propositions list what we know about the length of the vectors and the determinant of function field lattices. These are important to show properties like well-roundness or to study the determinant, for example.

\begin{proposition}\cite[Proposition 2.2.1]
{atecs2017lattices}\label{ff lat norm}
Let $\CurP$ and $\CurO_{\CurP}^*$ be as in Definition \ref{defn:FuncFieldLattice}, then the following holds.
    \begin{enumerate}
        \item[(i)] Let $z \in \CurO_\CurP^*$, then $||\Phi_\CurP(z)||^2 \equiv 
        0 \:\mathrm{mod}\: 2$, which means $\CurL_\CurP$ is an even lattice.
        \item[(ii)] Let $z \in \CurO_\CurP^*$, then 
        $||\Phi_\CurP(z)|| \geq \sqrt{2\mathrm{deg}(z)}$. Moreover, equality 
        holds if and only if the pole and zero of $z$ completely split 
        in the extension $F/\F_q(z)$.
        \item[(iii)] The minimum distance of $\CurL_\CurP$ satisfies 
        $d(\CurL_\CurP) \geq \sqrt{2\gamma}$, where $\gamma$ is the 
        gonality of $F$.
    \end{enumerate}
\end{proposition}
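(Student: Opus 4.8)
The plan is to handle the three parts in order, deriving (i) and (iii) from the arithmetic of the coordinate vector and reserving the genuine work for the equality characterization in (ii). Throughout I would abbreviate $m_i = v_{P_i}(z)\deg(P_i)$ for the $i$-th coordinate of $\Phi_\CurP(z)$, so that $\|\Phi_\CurP(z)\|^2 = \sum_{i=0}^n m_i^2$. The three structural facts I would lean on are: each $m_i \in \Z$; the coordinates sum to zero, $\sum_i m_i = \deg\big((z)\big) = 0$, since a principal divisor has degree zero; and, splitting by sign, the positive $m_i$ sum to $\deg\big((z)_0\big) = \deg(z)$ while the negative $m_i$ sum to $-\deg\big((z)_\infty\big) = -\deg(z)$.

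For (i) I would simply use that $m^2 \equiv m \pmod 2$ for every integer $m$, whence $\sum_i m_i^2 \equiv \sum_i m_i = 0 \pmod 2$; equivalently, $\CurL_\CurP \subseteq \CurA_n$ and $\CurA_n$ is even. For the inequality in (ii) the key elementary bound is $m_i^2 \geq |m_i|$ for every integer $m_i$ (with equality precisely when $m_i \in \{-1,0,1\}$). Separating zeros from poles gives
\[
\sum_i m_i^2 = \sum_{m_i>0} m_i^2 + \sum_{m_i<0} m_i^2 \geq \sum_{m_i>0} m_i + \sum_{m_i<0}|m_i| = \deg(z)+\deg(z) = 2\deg(z),
\]
which is the asserted bound.

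The delicate point, and the step I expect to require the most care, is the equality characterization. Equality above forces $|m_i| = 1$ for every $i$ in the support of $z$. To interpret this condition I would pass to the finite extension $F/\F_q(z)$ and invoke the fundamental identity. The zero $P_0'$ and pole $P_\infty'$ of $z$ inside $\F_q(z)$ are rational places, and for a place $P$ of $F$ lying over, say, $P_0'$ one has $v_P(z) = e(P\mid P_0')$ (since $z$ is a uniformizer at $P_0'$) and $\deg(P) = f(P\mid P_0')\deg(P_0') = f(P\mid P_0')$, so that $m_P = e(P\mid P_0')\,f(P\mid P_0')$. As $e,f\geq 1$, the condition $m_P = 1$ is equivalent to $e = f = 1$; and since $\sum_{P\mid P_0'} e(P\mid P_0') f(P\mid P_0') = [F:\F_q(z)] = \deg(z)$, requiring this for all $P\mid P_0'$ says exactly that $P_0'$ has $\deg(z)$ distinct unramified extensions, i.e.\ that $P_0'$ splits completely. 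Repeating the argument at $P_\infty'$ for the poles shows that $|m_i| = 1$ throughout is equivalent to complete splitting of both the zero and the pole of $z$ in $F/\F_q(z)$.

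Finally, (iii) follows from (ii). A nonzero vector of $\CurL_\CurP$ is $\Phi_\CurP(z)$ for a nonconstant $z \in \CurO_\CurP^*$, the kernel of $\Phi_\CurP$ being $\F_q^*$; for such $z$ the definition of gonality gives $\deg(z) = [F:\F_q(z)] \geq \gamma$, and combining with (ii) yields $\|\Phi_\CurP(z)\| \geq \sqrt{2\deg(z)} \geq \sqrt{2\gamma}$, so that $d(\CurL_\CurP) \geq \sqrt{2\gamma}$.
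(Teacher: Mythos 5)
Your proof is correct. The paper itself offers no proof of this proposition --- it is quoted from the cited thesis of Ateş --- but your argument (the coordinate-wise bound $m^2 \geq |m|$ for $m \in \Z$, the vanishing degree of principal divisors, and the fundamental identity $\sum_{P \mid P'} e(P\mid P')f(P\mid P') = [F:\F_q(z)]$ applied at the rational zero and pole of $z$ in $\F_q(z)$, plus gonality for (iii)) is the standard one and is what the cited source does.
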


\begin{proposition}\cite[Proposition 2.2.2]
{atecs2017lattices}\label{prop:ff_lat_det}
Let $\mathcal{L}_\mathcal{P}$ be the function field lattice with  $\mathcal{P} = (P_0, P_1, \ldots, P_n)$. Let $d_i$ be the degrees of the places $P_i$, and let $d = \mathrm{gcd}(d_0, \ldots, d_n)$.
    \begin{enumerate}
        \item[(i)] The index $[\CurA_n:\mathcal{L}_\mathcal{P}]$ is given by 
        $$[\CurA_n:\mathcal{L}_\mathcal{P}] = \frac{d_0d_1\cdots d_n}{d} \cdot h_0$$
        where $h_0$ is an integer which 
        divides the class number $h$.

        \item[(ii)] The determinant of the function field lattice 
        $\mathcal{L}_\mathcal{P}$ is given 
        by $$\mathrm{det}(\mathcal{L}_\mathcal{P}) = 
        \sqrt{n+1}\cdot \frac{d_0d_1\cdots d_n}{d} \cdot h_0.$$
    \end{enumerate}
\end{proposition}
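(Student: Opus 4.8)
The plan is to realize $\CurL_\CurP$ as a scaled copy of the lattice of principal divisors supported on $\CurP$, and then to split the index $[\CurA_n:\CurL_\CurP]$ into a purely combinatorial factor and an arithmetic factor coming from the class group. Coordinatizing $\Z^{n+1}$ by $(P_0,\dots,P_n)$, I would write $\Lambda = \{v \in \Z^{n+1} : \sum_{i=0}^n d_i v_i = 0\}$ for the degree-zero divisors supported on $\CurP$ and $\Princ_\CurP = \{\dv(z) : z \in \CurO_\CurP^*\} \subseteq \Lambda$ for the principal ones. With $D = \mathrm{diag}(d_0,\dots,d_n)$ the diagonal scaling, one has $\Phi_\CurP(z) = D\,(v_{P_0}(z),\dots,v_{P_n}(z))$, so $\CurL_\CurP = D(\Princ_\CurP)$; since every principal divisor has degree zero this gives the chain $\CurL_\CurP = D(\Princ_\CurP) \subseteq D(\Lambda) \subseteq \CurA_n$.

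Because $D$ is injective, the index factors as
\begin{equation*}
[\CurA_n:\CurL_\CurP] = [\CurA_n:D(\Lambda)] \cdot [D(\Lambda):D(\Princ_\CurP)] = [\CurA_n:D(\Lambda)] \cdot [\Lambda:\Princ_\CurP].
\end{equation*}
I would set $h_0 := [\Lambda:\Princ_\CurP]$, which is finite since $\Princ_\CurP$ has rank $n$ (the cited fact that $\CurL_\CurP$ has rank $n$, together with injectivity of $D$). The arithmetic input is that the class map $\Lambda \to \Pic^0(F)$ has kernel exactly $\Princ_\CurP$ (a degree-zero divisor supported on $\CurP$ is principal iff it lies in $\Princ_\CurP$), so $\Lambda/\Princ_\CurP$ embeds as a subgroup of $\Pic^0(F)$; hence $h_0 \mid h$, which is the divisibility claim and explains why $h_0$ may be strictly smaller than $h$ when not every class is represented on $\CurP$.

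The main computational step is the lattice index $[\CurA_n:D(\Lambda)]$, where care is needed because $D(\Lambda)$ and $\CurA_n$ both have rank $n$ inside the rank-$(n+1)$ lattice $\Z^{n+1}$. A direct coordinate check gives $D(\Lambda) = M \cap \CurA_n$, where $M = d_0\Z \times \cdots \times d_n\Z$ is the full-rank lattice with $[\Z^{n+1}:M] = d_0\cdots d_n$. By the second isomorphism theorem $[\CurA_n : M\cap\CurA_n] = [M+\CurA_n:M]$, and the coordinate-sum map $\sigma(w)=\sum_i w_i$ (with $\ker\sigma=\CurA_n$ and $\sigma(M)=\gcd(d_0,\dots,d_n)\Z=d\Z$) induces an isomorphism $\Z^{n+1}/\CurA_n \cong \Z$ carrying $M+\CurA_n$ onto $d\Z$, whence $[\Z^{n+1}:M+\CurA_n]=d$. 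Therefore
\begin{equation*}
[\CurA_n:D(\Lambda)] = [M+\CurA_n:M] = \frac{[\Z^{n+1}:M]}{[\Z^{n+1}:M+\CurA_n]} = \frac{d_0 d_1 \cdots d_n}{d},
\end{equation*}
which together with the previous paragraph proves (i).

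For (ii) I would use that $\CurL_\CurP$ and $\CurA_n$ are full-rank lattices in the same hyperplane $\{\sum_i w_i = 0\} \subset \R^{n+1}$, so covolumes scale by the index: $\det(\CurL_\CurP) = [\CurA_n:\CurL_\CurP]\cdot\det(\CurA_n)$. Since the root lattice $\CurA_n$ has Gram determinant $n+1$, i.e. $\det(\CurA_n)=\sqrt{n+1}$, part (i) immediately yields $\det(\CurL_\CurP) = \sqrt{n+1}\cdot \frac{d_0 d_1 \cdots d_n}{d}\cdot h_0$. I expect the only genuine obstacle to be the bookkeeping in the index $[\CurA_n:D(\Lambda)]$ and the verification of the identity $D(\Lambda) = M\cap\CurA_n$; the class-group embedding and the determinant-index relation are standard.
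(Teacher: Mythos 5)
Your proof is correct and complete: the paper itself states this proposition without proof (citing Ateş's thesis), and your decomposition $[\CurA_n:\CurL_\CurP]=[\CurA_n:D(\Lambda)]\cdot[\Lambda:\Princ_\CurP]$ is exactly the intended argument, with your $h_0=[\Lambda:\Princ_\CurP]$ agreeing with the paper's later identification of $h_0$ as $|\mathrm{Div}^0_{\CurP}(F)/\mathrm{Princ}_{\CurP}(F)|$ in Section 4. The index computation via $D(\Lambda)=M\cap\CurA_n$ and the second isomorphism theorem, the embedding of $\Lambda/\Princ_\CurP$ into $\mathrm{Cl}^0(F)$ giving $h_0\mid h$, and the covolume--index relation for (ii) are all sound.
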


The following examples elaborate the definition for well-know function field families and provide previously known results in the literature. 

\begin{example}[Rational Function Field Lattices]\label{eg:RationalFuncFieldLattices}

 Let $\F_q(x)$ denote the rational function field and choose a tuple $\mathcal{P}$ of any $n+1$ rational places in $K(x)$, i.e. 
\begin{align*}
    \mathcal{P} = (P_0, P_1, ... , P_n),
\end{align*}
    where $P_i \in \mathbb{P}^1_{K(x)}$ for each $i=1,\ldots, n+1$.
    Therefore, by Definition \ref{defn:FuncFieldLattice}, the function field lattice $\mathcal{L}_\mathcal{P}$ is 
    equal to the root lattice $\CurA_n$. More explicitly, we first consider the divisors $P_i - P_{i+1}$ for 
    $0\leq i \leq n$. Since the class number of the rational function 
    field is $h=1$, these divisors are all principle, which means the 
    $n$ vectors of the form 
    $$ (0, \dots, 0, 1, -1, 0, \ldots , 0),$$
    where $1$ and $-1$ are in the $i$-th and $i+1$-th coordinates respectively,
    are all contained in $\mathcal{L}_\mathcal{P}$. These vectors generate $\CurA_n$. Since every function field lattice is a sublattice of the root lattice $\CurA_n$, we conclude that the lattices  $\mathcal{L}_\mathcal{P}$ and $\CurA_n$ are indeed equal. Therefore, it is well-rounded, its minimum distance is $\sqrt{2}$, and its determinant is $\sqrt{n+1}$. 
\end{example}

\begin{example}[Elliptic Function Field Lattices]\label{eg:EllipticFuncFieldLattices}
    
Let $E/\F_q$ be an elliptic function field, that is, an algebraic function field of genus 1. 
By fixing $\CurP$ to be the set of all rational places of $E/\mathbb{F}q$, i.e., $\CurP = \mathbb{P}^{1}(E)$, these lattices were studied by \cite{fukshansky2014lattices} and \cite{sha2015lattices}. More specifically, the former establishes the well-roundedness property of elliptic function field lattices, while the latter builds on this by showing that elliptic function field lattices have a basis of minimal vectors, focusing mainly on the group structure of elliptic curves. In particular, \cite{fukshansky2014lattices} shows that when $n+1 \geq 4$, the minimum distance of $\CurL_\CurP$ is $d(\CurL_\CurP) = 2$, and the minimal vectors correspond to principal divisors of the form $P + Q - R - S$, where $P, Q, R$, and $S$ are distinct places in $\CurP$. In \cite{sha2015lattices}, it is proven that when $\CurP$ is cyclic, the elliptic function field lattice $\CurL_\CurP$ is equal to the Barnes lattice $\CurB_n$, defined as\begin{align}
        \CurB_n \coloneqq \{ v \in \CurA_n \: \mid \: \sum_{i=0}^{n} i v_i \equiv 
    0 \: \mathrm{mod} \: n+1 \}. \nonumber 
    \end{align} 
Furthermore, \cite{sha2015lattices} shows that for $n+1 \geq 5$, the elliptic function field lattice $\CurL_\CurP$ has a basis of minimal vectors. Even though this was already proven in the same paper, we can observe, as an immediate consequence of Proposition \ref{prop:ff_lat_det}, that for an elliptic function field lattice $\CurL_\CurP$ with $|\CurP| = n+1 \geq 2$, we have $\det(\CurL_\CurP) = (n+1)^{\frac{3}{2}}$ \cite{lilymaster} 
\end{example}

Since hyperelliptic function fields are more complex than rational and elliptic ones, we address their corresponding lattice structures in the following separate subsection.

\subsection{Hyperelliptic Function Field Lattices}\label{eg:HyperellipticFuncFieldLattices}
Here, we summarize all the results from \cite{atecs2017lattices} that are necessary for our work. Let $F = \mathbb{F}_q(x,u)$ be a hyperelliptic function field with defining equation 
\begin{align*}
    u^2 = f(x),
\end{align*}
where $f(x)$ is a square free polynomial. For the choice of $\CurP$ in hyperelliptic function fields, we have three possible cases: a degree one place can either remain ramified as a degree one place, split into two degree one places, or become inert as a degree two place.
With this in mind, rather than simply choosing all the rational places of $F$, we work primarily with the following specific set of places for $\CurP$.
\begin{align*}
    \CurP = (P_\infty, P_2, \ldots, P_r, Q_1, \ldots, Q_s), 
\end{align*}
where $P_\infty$ denotes the pole of $x$, and  $P_2, \ldots, P_r$ 
denote all of the other rational places which are ramified in $F$,
and $Q_1, \ldots, Q_s$ denote all inert places of degree $2$. 
Furthermore, in hyperelliptic function fields, the ramified places correspond one-to-one with the roots of the defining polynomial $f(x)$. This means that the number of ramified places of degree one in $F$ is $1 \leq r \leq 2g+2$ \cite{stichtenoth}.

To understand the generating vectors of the lattice $\CurL_\CurP$, one can first analyze the generating elements of the ring $\CurO_\CurP^*$. To this end, one fixes 
$\alpha_i, \beta_j\in \mathbb{F}_q$ for $2 \leq i \leq r, \: 1 \leq j \leq s$ such that $(x-\alpha_i)$ and $(x-\beta_j)$ are prime elements 
of $P_i$ and $Q_j$, respectively. Therefore, the group $\CurO_\CurP^*$ can be described as follows:
    \begin{enumerate}
        \item[(i)] In case $\mathrm{char}(F) = 2$ or in case 
        $\mathrm{char}(F) \neq 2$ and $f(x)$ does not split into linear 
        factors,
        \begin{align*}
            \CurO_\CurP^* = \{ \delta \cdot \prod_{i=2}^r (x-\alpha_i)^{a_i} \prod_{j=1}^s (x-\beta_j)^{b_j} \: | \: 
            \delta \in \mathbb{F}_q^*,\: a_i, b_j \in \Z \}.
        \end{align*}

        \item[(ii)] In case $\mathrm{char}(F) \neq 2$ and $f(x)$ splits into linear factors, 
        \begin{align*}
            \CurO_\CurP^* = \{ \delta \cdot u^\mu \cdot \prod_{i=2}^r 
            (x-\alpha_i)^{a_i} \prod_{j=1}^s (x-\beta_j)^{b_j} \: | \: 
            \delta \in \mathbb{F}_q^*,\: \mu \in \{0,1\},\:  a_i, b_j \in \Z \},
        \end{align*}
        
    \end{enumerate}

This clarifies which elements generate the group $\CurO_\CurP^*$. We primarily distinguish between two cases: whether $f(x)$ splits into linear factors or not. The reason for this distinction is as follows. In both cases we have the generating elements 
$x-\alpha_i$ and $x-\beta_j$ whose principal divisors are 
$(x-\alpha_i)^F = 2P_i-2P_\infty$ and $(x-\beta_j)^F = Q_j-2P_\infty$. 
However, in the case where $f(x)$ splits into linear factors 
the principal divisor of $u$ from the defining equation 
\begin{align*}
    u^2 = f(x)
\end{align*}
has support on $\CurP$ and hence is an element of $\CurO_\CurP^*$. 
The principal divisor of $u$ is then given by 
\begin{align*}
    (u)^F = -(2g+1)P_\infty + P_2 + \ldots + P_{2g+2},
\end{align*}
which cannot be generated by the elements $x-\alpha_i$ and $x-\beta_j$. 

Therefore, immediately, we see that the hyperelliptic function field lattice $\CurL_\CurP$ can be generated as follows. 
    \begin{enumerate}
        \item[(i)] In case $\mathrm{char}(F) = 2$ or in case 
        $\mathrm{char}(F) \neq 2$ and $f(x)$ does not split into linear 
        factors,
        \begin{align*}
            \CurL_\CurP = \langle \{ \Phi_\CurP(x-\alpha_i), 
            \Phi_\CurP(x-\beta_j) \: | \: 2 \leq i \leq r, \: 
            1 \leq j \leq s \}\rangle.
        \end{align*}
        \item[(ii)] In case $\mathrm{char}(F) \neq 2$ and $f(x)$ splits into linear factors,
        \begin{align*}
            \CurL_\CurP = \langle \{ \Phi_\CurP(x-\alpha_i), 
            \Phi_\CurP(x-\beta_j) \: | \: 2 \leq i \leq r, \: 
            1 \leq j \leq s \} \cup \{ \Phi_\CurP(u) \} \rangle.
        \end{align*}
        
    \end{enumerate}

By this representation, the following main result on hyperelliptic function lattices is obtained in \cite{atecs2017lattices}. Assume $g\geq 3$ then the following hold:
    \begin{enumerate}
        \item[(i)] The minimum distance of $\CurL_\CurP$ is 
        $d(\CurL_\CurP) = \sqrt{8}$ and 
        the vectors of the form 
        \begin{align*}
            \Phi_\CurP(x-\alpha_i), 
            \Phi_\CurP(x-\beta_j) \: \text{ for } \: 2 \leq i \leq r, \: 
            1 \leq j \leq s 
        \end{align*}
        are minimal vectors.
        \item[(ii)] The lattice $\CurL_\CurP$ is well-rounded.
    \end{enumerate}
        

All the algebraic function field lattices shown so far are well-rounded. The following case shows that well-roundness is not 
a universal property of function field lattices. Indeed, suppose $\CurP$ contains all the extensions of at least two rational places of $\mathbb{F}_q(x)$ which split in the extension $F$. Then the hyperelliptic function field lattice $\CurL_\CurP$ is not well-rounded. In particular this shows that in most cases, if $\CurP$ is chosen to be the set of rational places in $F$, then the resulting hyperelliptic function field lattice is not well-rounded.
 
\section{Successive Minima}
In this section, we address the natural question of determining the successive minima of the non-well-rounded function field lattices constructed in \cite{atecs2016note}. To do so, we first examine all possible cases by choosing $\CurP$ as in Subsection \ref{eg:HyperellipticFuncFieldLattices}. The following theorem covers all cases, showing whether the lattices are well-rounded or exhibit stronger properties. Additionally, we provide some new results not mentioned in \cite{atecs2017lattices}.

\begin{theorem}\label{thm:Hyperll_ff_lat_basis_vs_well-rounded}
    Let $F/\mathbb{F}_q(x)$ be a hyperelliptic function field 
    with defining equation $u^2 = f(x)$ where $f(x)$ is a 
    square-free polynomial of degree $2g+1$. Let 
    $\CurP$ be the set of rational ramified places $P_i$'s and inert places $Q_j$'s of degree $2$, denoted by  $$\CurP = \{ P_\infty, P_2, \ldots, P_r, Q_1, \ldots, Q_s \}.$$ Then the following hold. 
    \begin{enumerate}
        \item[(i)] If $\mathrm{char}(F) = 2$ then 
        $\CurL_\CurP$ has a basis of minimal vectors.
        \item[(ii)] If $\mathrm{char}(F) \neq 2$ and $f(x)$ does not split into linear factors then 
        $\CurL_\CurP$ has a basis of minimal vectors.
        \item[(iii)] If $\mathrm{char}(F) \neq 2$, $f(x)$ splits into linear factors and the genus is $g=3$ then 
        $\CurL_\CurP$ has a basis of minimal vectors.
        \item[(iv)]  If $\mathrm{char}(F) \neq 2$, $f(x)$ splits into linear factors and the genus is $g>3$ then 
        $\CurL_\CurP$ is well-rounded.
        \item[(v)] If $\mathrm{char}(F) \neq 2$, $f(x)$ splits into linear factors and the genus is $g=2$ then
        $\CurL_\CurP$ is not well-rounded.
    \end{enumerate}
\end{theorem}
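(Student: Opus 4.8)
The plan is to reduce everything to a single Euclidean minimization. Writing $e_P$ for the standard basis vector attached to a place $P\in\CurP$ and abbreviating $e_\infty=e_{P_\infty}$, the principal divisors recorded in Subsection~\ref{eg:HyperellipticFuncFieldLattices} give the explicit generators
\begin{align*}
\Phi_\CurP(x-\alpha_i)=-2e_\infty+2e_{P_i},\qquad \Phi_\CurP(x-\beta_j)=-2e_\infty+2e_{Q_j},
\end{align*}
each of squared length $8$, and, when $f$ splits, $\Phi_\CurP(u)=-(2g+1)e_\infty+\sum_{i=2}^{2g+2}e_{P_i}$, together with the relation $2\Phi_\CurP(u)=\sum_{i=2}^{2g+2}\Phi_\CurP(x-\alpha_i)$. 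Since $u^2$ lies in the group generated by the $x-\alpha_i$, every lattice vector can be written as
\begin{align*}
v=\mu\,\Phi_\CurP(u)+\sum_i a_i\,\Phi_\CurP(x-\alpha_i)+\sum_j b_j\,\Phi_\CurP(x-\beta_j),\qquad \mu\in\{0,1\},\ a_i,b_j\in\Z,
\end{align*}
with $\mu\equiv0$ in cases (i) and (ii). I would compute $\|v\|^2$ coordinatewise: for $\mu=0$ it equals $4S^2+4\sum a_i^2+4\sum b_j^2$ with $S=\sum a_i+\sum b_j$, and for $\mu=1$ it equals $\big((2g+1)+2S\big)^2+\sum_i(1+2a_i)^2+4\sum_j b_j^2$. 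All five parts then follow by minimizing these two expressions.

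For (i) and (ii) only $\mu=0$ occurs, and the first expression is at least $8$, with equality exactly for a single $\pm1$ exponent or a cancelling $\pm1$ pair; hence $d(\CurL_\CurP)=\sqrt8$. The $n=(r-1)+s$ generators $\Phi_\CurP(x-\alpha_i),\Phi_\CurP(x-\beta_j)$ are visibly linearly independent (they have pairwise distinct nonzero coordinates) and generate $\CurL_\CurP$, so they already constitute a basis of minimal vectors.

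When $f$ splits I would minimize the $\mu=1$ expression: each odd square $(1+2a_i)^2$ and the odd square $((2g+1)+2S)^2$ is at least $1$, which forces $b_j=0$, and taking $g$ (or $g+1$) of the $a_i$ equal to $-1$ attains the minimum $2g+2$. Comparing with $8$ yields the trichotomy $2g+2<8$, $=8$, $>8$ according to $g=2$, $g=3$, $g>3$. For $g>3$ the minimum is $\sqrt8$, realized by the $\mu=0$ vectors, and the $n$ independent generators $\Phi_\CurP(x-\alpha_i),\Phi_\CurP(x-\beta_j)$ prove well-roundedness; they no longer form a basis, as $\Phi_\CurP(u)$ is a genuine half-vector, which is exactly why only well-roundedness survives. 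For $g=2$ the minimum is $\sqrt6$, and the minimization shows every minimal vector has $\mu=1$ and all $b_j=0$; thus all minimal vectors lie in the span of $e_\infty,e_{P_2},\dots,e_{P_{2g+2}}$, a space of dimension $2g+1=5$. Since the presence of at least one inert place gives $n=5+s>5$, there cannot be $n$ independent minimal vectors, so $\CurL_\CurP$ is not well-rounded.

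The delicate case is $g=3$, where $2g+2=8$ and the two families of minimal vectors coexist. Here I would exploit the relation to build a basis directly: eliminating $\Phi_\CurP(x-\alpha_2)=2\Phi_\CurP(u)-\sum_{i\geq3}\Phi_\CurP(x-\alpha_i)$ shows that $\{\Phi_\CurP(u),\Phi_\CurP(x-\alpha_3),\dots,\Phi_\CurP(x-\alpha_8)\}$ together with the $\Phi_\CurP(x-\beta_j)$ is a basis of $\CurL_\CurP$. Replacing the single non-minimal vector $\Phi_\CurP(u)$ by $w=\Phi_\CurP(u)-\Phi_\CurP(x-\alpha_3)-\Phi_\CurP(x-\alpha_4)-\Phi_\CurP(x-\alpha_5)$ changes the integer span only by a unimodular move, and the $\mu=1$ minimization (with exactly three of the $a_i$ equal to $-1$) certifies $\|w\|^2=8$; this produces a basis of minimal vectors. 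The main obstacle I anticipate is the bookkeeping in this last step, namely verifying that the substitution is unimodular and that the resulting set is simultaneously generating, linearly independent, and entirely minimal. The underlying reason it works only at $g=3$ is the arithmetic coincidence $2g+2=8$, which is precisely the threshold at which the half-vector $\Phi_\CurP(u)$ can be corrected to minimal length; I would also flag the standing assumption $s\geq1$ that is needed for the conclusion of (v).
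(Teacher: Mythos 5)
Your proposal is correct, and it reaches the same conclusions by a somewhat different route. Where the paper imports the minimum distance and well-roundedness of cases (i)--(iv) from At\c{e}\c{s}'s thesis and, for case (v), rules out short vectors by function-field degree arguments (a vector of length $\sqrt{2}$ would force $F$ rational, one of length $\sqrt{4}$ or of shape $(2,-1,-1)$ would force $[F:\mathbb{F}_q(z)]=2$ and hence entries $\pm 2$), you instead parametrize every lattice vector by the exponent data $(\mu,a_i,b_j)$ from the explicit description of $\CurO_\CurP^*$ and minimize the two resulting quadratic forms $4S^2+4\sum a_i^2+4\sum b_j^2$ and $((2g+1)+2S)^2+\sum(1+2a_i)^2+4\sum b_j^2$. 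This is more self-contained and makes the trichotomy $2g+2\lessgtr 8$ transparent; it also yields, as a by-product, the full classification of minimal vectors that the paper needs for case (v) and for the subsequent successive-minima theorem. Your basis construction in case (iii) is the same trick as the paper's: they take $v^*=\Phi_\CurP(u)-\sum_{i=2}^{g+1}\Phi_\CurP(x-\alpha_i)$ and keep $\Phi_\CurP(x-\alpha_2),\dots,\Phi_\CurP(x-\alpha_{2g+1})$, verifying that $\Phi_\CurP(u)$ and $\Phi_\CurP(x-\alpha_{2g+2})$ are recovered, while you perform the equivalent unimodular substitution after first eliminating $\Phi_\CurP(x-\alpha_2)$ via $2\Phi_\CurP(u)=\sum_i\Phi_\CurP(x-\alpha_i)$; both are valid. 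Two small remarks: in case (v) the minimal vectors lie in the $2g+2=6$ ramified coordinates \emph{intersected with} the hyperplane $\sum_i v_i=0$, which is what gives dimension $5$ --- the coordinate subspace itself has dimension $6$, so your phrasing there should be adjusted even though the count of $5$ is right; and your observation that (v) genuinely requires $s\geq 1$ (otherwise the rank drops to $5$ and the lattice is well-rounded, e.g.\ $u^2=x^5-x$ over $\mathbb{F}_5$) is a hypothesis the paper leaves implicit and is worth making explicit.
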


\begin{proof}
   Well-roundedness for cases $(i)-(iv)$ was already proven in \cite{atecs2017lattices}.
    
Now, let us focus on cases $(i)$ and $(ii)$. By Subsection \ref{eg:HyperellipticFuncFieldLattices}, the vectors $\Phi_\CurP(x-\alpha_i)$ and $\Phi_\CurP(x-\beta_j)$, for $2 \leq i \leq r$ and $1 \leq j \leq s$, generate the lattice. Moreover, they are minimal vectors. Since there are exactly $r+s-1$ of these vectors, which is the rank of the lattice $\CurL_\CurP$, these vectors form a basis of minimal vectors.

To show $(iii)$, suppose the polynomial $f(x)$ splits and hence $u$ is an element in $\CurO_\CurP^*$. As stated in Subsection \ref{eg:HyperellipticFuncFieldLattices} the vectors 
$\Phi_\CurP(x-\alpha_i), \Phi_\CurP(x-\beta_j) \: 
\text{ for } \: 2 \leq i \leq 2g+2, \: 1 \leq j \leq s$ and $\Phi_\CurP(u)$ generate the lattice.
Consider the vector 
    \begin{align*}
        v^* &= \Phi_\CurP(u) - \Phi_\CurP(x-\alpha_2) - \ldots 
        - \Phi_\CurP(x-\alpha_{g+1})\\
        &= (-1, -1, \ldots, -1, 1, \ldots , 1, 0, \ldots, 0)
    \end{align*}
    where the first $g+1$ coordinates are $-1$, the next $g+1$ 
    coordinates are $1$ and the last $s$ coordinates are $0$.
    This vector has length $||v^*|| = \sqrt{2g+2} = \sqrt{8}$, since 
    the genus is $g=3$, and is therefore also a minimal vector.
    Now consider the set of $2g+1+s$ vectors 
    \begin{align*}
        \{v^*, \Phi_\CurP(x-\alpha_2), \ldots, \Phi_\CurP(x-\alpha_{2g+1}), 
        \Phi_\CurP(x-\beta_1), \ldots, \Phi_\CurP(x-\beta_s) \}.
    \end{align*}
    These vectors are all minimal and the rank of the lattice is $2g+1+s$. 
    Furthermore, they generate both $\Phi_\CurP(u)$ and $\Phi_\CurP(x-\alpha_{2g+2})$ as follows:
    \begin{align*}
        \Phi_\CurP(u) &= v^* + \Phi_\CurP(x-\alpha_2) + \ldots 
        + \Phi_\CurP(x-\alpha_{g+1})\\
        \Phi_\CurP(x-\alpha_{2g+2}) &= 2\cdot v^* + \Phi_\CurP(x-\alpha_2) 
        + \ldots + \Phi_\CurP(x-\alpha_{g+1})\\
        &-\Phi_\CurP(x-\alpha_{g+2})
        - \ldots - \Phi_\CurP(x-\alpha_{2g+1}).
    \end{align*}
  This implies that these vectors generate the entire lattice. Therefore, they form a basis of minimal vectors for $\CurL_\CurP$.

   Lastly, we show the last point $(v)$.
    First we prove that the minimum distance of $\CurL_\CurP$ in this case, is 
    $d(\CurL_\CurP) = \lambda_1(\CurL_\CurP) = \sqrt{6}$.
    Recall that by Proposition \ref{ff lat norm} all function field lattices are even, meaning that for all $z \in \CurO_\CurP^*$ we have 
    $||\Phi_\CurP(z)|| = \sqrt{2t}$ for some integer $t \geq 1$. Now, assume there exists a $z$ in $\CurO_\CurP^*$ such that 
    $||\Phi_\CurP(z)|| < \sqrt{6}$. We will go through the cases to show that this is not possible.
    
    If $||\Phi_\CurP(z)|| = \sqrt{2}$ then without loss of generality the vector has the 
    form 
    \begin{align*}
        \pm (0, \ldots, 0, 1, -1, 0, \ldots, 0).
    \end{align*}
    This implies that $z$ has degree $1$ and the extension degree $F/\mathbb{F}_q(z)$ 
    is $[F:\mathbb{F}_q(z)] = 1 $, which means $F$ must be the rational function field. However, this yields a contradiction.
    
    If $||\Phi_\CurP(z)|| = \sqrt{4}$ then without loss of generality the vector has the 
    form 
    \begin{align*}
        \pm (0, \ldots, 0, 1, 1, -1, -1, 0, \ldots, 0).
    \end{align*}
    Then, $z$ has degree 2, so $[F:\mathbb{F}_q(z)] = 2$. Since $\mathbb{F}_q(x)$ is the only rational subfield of degree 2, this implies that $\mathbb{F}_q(x) = \mathbb{F}_q(z)$. 
    If either the pole or zero of $z$ splits in $F$ then $z$ cannot be an element of $\CurO_\CurP^*$ due to how 
    $\CurP$ is defined. This means the pole and zero of $z$ must be either ramified or inert, 
    in which case $\Phi_\CurP(z)$ takes the form of
    \begin{align*}
        \pm (0, \ldots, 0, 2, -2, 0, \ldots, 0).
    \end{align*}
    By the same argument, we can exclude any $z$ such that the vector $\Phi_\CurP(z)$ has length $\sqrt{6}$ and is of the form 
    \begin{align*}
        \pm (0, \ldots, 0, 2, -1, -1, 0, \ldots, 0),
    \end{align*}
    since in this case $z$ would also have degree $2$. 

    Now consider the five vectors generated by $\Phi_\CurP(u)$ and 
    $\Phi_\CurP(x-\alpha_i)$ for $2\leq i \leq 6$, that is,
    \begin{align*}
    ( 1 , 1 , 1 , -1 , -1 , -1 , 0 ,\ldots , 0),\\
    ( 1 , 1 , -1 , 1 , -1 , -1 , 0 ,\ldots , 0),\\
    ( 1 , -1 , 1 , 1 , -1 , -1 , 0 ,\ldots , 0),\\
    ( -1 , 1 , 1 , 1 , -1 , -1 , 0 ,\ldots , 0),\\
    ( -1 , -1 , 1 , 1 , 1 , -1 , 0 ,\ldots , 0).
    \end{align*}
   One can verify that these vectors are linearly independent and all have length $\sqrt{6}$. We now need to show that there are no additional vectors of length $\sqrt{6}$ that are linearly independent from these.
    Recall that each place $Q_j$ for $1\leq j \leq s$ in $\CurP$ is 
    inert of degree $2$. This means the $(6+j)$-th entry of any vector $v$ in $\CurL_\CurP$ is a multiple of two. 
    Since we already eliminated all $z$ where $\Phi_\CurP(z)$ has length $\sqrt{6}$ with an entry equal to $\pm2$. This implies that any such element $z$ would have support only on the ramified places in $\CurP$.
    Since we are limited to six coordinates and must be sure that the sum of the entries is zero, there can be at most five linearly independent vectors that satisfy this condition. This implies that there are at most five linearly independent minimal vectors in $\CurL_\CurP$, meaning the lattice is not well-rounded.

\end{proof}

So far, we have not discussed the successive minima of function field lattices in detail because, for well-rounded lattices, all the successive minima are simply equal to the minimum distance. However, as shown in the previous theorem, there is a specific case where determining the successive minima becomes interesting. The following theorem addresses this case.

\begin{theorem}
    Let $\CurP$ be as in Theorem \ref{thm:Hyperll_ff_lat_basis_vs_well-rounded}.
    Assume $\mathrm{char}(F) \neq 2$, $f(x)$ splits into linear factors and the genus is $g=2$. Then the successive minima of $\CurL_\CurP$ 
    are 
    \begin{align*}
        \lambda_1(\CurL_\CurP) = \ldots = \lambda_5(\CurL_\CurP) = \sqrt{6}, \\ 
        \lambda_6(\CurL_\CurP) = \ldots = \lambda_{6+s-1}(\CurL_\CurP) = \sqrt{8}.
    \end{align*}

\end{theorem}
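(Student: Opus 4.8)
The plan is to leverage almost everything from the proof of Theorem~\ref{thm:Hyperll_ff_lat_basis_vs_well-rounded}(v), which already analyzed this exact genus-$2$, split case. Recall that there $2g+1 = 5$, so $\CurP$ consists of the six ramified places $P_\infty, P_2, \ldots, P_6$ together with the $s$ inert places $Q_1, \ldots, Q_s$, and $\CurL_\CurP$ has rank $n = 5+s$. First I would record the two facts established there: (a) the minimum distance is $d(\CurL_\CurP) = \sqrt{6}$, so no nonzero vector has length below $\sqrt{6}$; and (b) every vector of length exactly $\sqrt{6}$ is supported only on the six ramified coordinates, whence there are \emph{at most} five linearly independent minimal vectors, of which five were exhibited explicitly. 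Together these give that $\CurL_\CurP$ contains at most five linearly independent vectors of length $\leq \sqrt{6}$ and at least five, so $\lambda_1(\CurL_\CurP) = \cdots = \lambda_5(\CurL_\CurP) = \sqrt{6}$ follows at once.

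For the remaining minima I would produce $s$ further linearly independent vectors of length $\sqrt{8}$, and the natural candidates are the generators $\Phi_\CurP(x-\beta_j)$ for $1 \leq j \leq s$. Since $(x-\beta_j)^F = Q_j - 2P_\infty$ and $\deg(Q_j) = 2$, each such vector has entry $-2$ in the $P_\infty$-coordinate, entry $+2$ in the $Q_j$-coordinate, and $0$ elsewhere, giving $||\Phi_\CurP(x-\beta_j)|| = \sqrt{8}$. Because the five minimal vectors vanish on all $Q$-coordinates, while the matrix formed by the $\Phi_\CurP(x-\beta_j)$ restricted to the $Q$-coordinates is $2$ times the $s \times s$ identity, the full collection of $5+s = n$ vectors is linearly independent. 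This exhibits $n$ independent lattice vectors of lengths $\sqrt{6}$ (five of them) and $\sqrt{8}$ ($s$ of them), yielding the upper bounds $\lambda_k(\CurL_\CurP) \leq \sqrt{8}$ for $6 \leq k \leq 5+s$.

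It then remains only to prove the matching lower bound $\lambda_6(\CurL_\CurP) \geq \sqrt{8}$, after which monotonicity $\lambda_6 \leq \cdots \leq \lambda_{5+s} \leq \sqrt{8}$ forces equality throughout. Here I would use that $\CurL_\CurP$ is an even lattice by Proposition~\ref{ff lat norm}(i), so every nonzero length has the form $\sqrt{2t}$ with $t \geq 1$; in particular the only lengths strictly below $\sqrt{8}$ that can occur are $\sqrt{2}, \sqrt{4}, \sqrt{6}$, and the first two were already excluded in Theorem~\ref{thm:Hyperll_ff_lat_basis_vs_well-rounded}(v). Thus any six linearly independent vectors of length $< \sqrt{8}$ would have to all have length $\sqrt{6}$, contradicting fact (b). Hence $\lambda_6(\CurL_\CurP) \geq \sqrt{8}$, and combining with the upper bound completes the argument.

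The proof is short precisely because the hard content—the exact minimum distance and the sharp count of five independent minimal vectors—is inherited from the earlier theorem. The one genuine point I would write out in full is the linear independence of the five minimal vectors together with the $s$ vectors $\Phi_\CurP(x-\beta_j)$, i.e. that the inert places really contribute $s$ new directions disjoint from the ramified span; the block structure on the $Q$-coordinates is exactly what makes this transparent.
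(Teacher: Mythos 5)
Your proposal is correct and follows essentially the same route as the paper: inherit the first five minima from the analysis in Theorem \ref{thm:Hyperll_ff_lat_basis_vs_well-rounded}(v), exhibit the $s$ vectors $\Phi_\CurP(x-\beta_j)$ of length $\sqrt{8}$, and invoke evenness to rule out anything between $\sqrt{6}$ and $\sqrt{8}$. You are somewhat more explicit than the paper about why $\lambda_6 \geq \sqrt{8}$ (combining evenness with the cap of five independent $\sqrt{6}$-vectors) and about the block-diagonal linear independence argument, but these are expository refinements of the same proof.
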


\begin{proof}
    We note that the proof of part $(v)$ of Theorem \ref{thm:Hyperll_ff_lat_basis_vs_well-rounded} shows that the first five successive minima are 
    \begin{align*}
        \lambda_1(\CurL_\CurP) = \ldots = \lambda_5(\CurL_\CurP) = \sqrt{6}.
    \end{align*}

   The final step is straightforward. Since the lattice is even, the next possible shortest length of a lattice vector after $\sqrt{6}$ is $\sqrt{8}$. 
    We provide the remaining $s$ vectors in 
    the lattice $\CurL_\CurP$ which are linearly independent from the five we already showed in the proof of part $(v)$ of Theorem \ref{thm:Hyperll_ff_lat_basis_vs_well-rounded}. These are the vectors $\Phi_\CurP(x-\beta_j)$ 
    for $1\leq j \leq s$, or more explicitly,
\begin{equation*}
    \begin{matrix}
        (-2, & 0, & 0, & 0, & 0, & 0, & 2, & 0, & \ldots, & 0),\\
        (-2, & 0, & 0, & 0, & 0, & 0, & 0, & 2, & \ldots, & 0),\\
        \phantom{(-2,} & \phantom{0,} & \phantom{0,} & \phantom{0,} & \vdots & \phantom{0,} & \phantom{0,} & \phantom{0,} & \phantom{\ldots,} & \phantom{0)}\\
        (-2, & 0, & 0, & 0, & 0, & 0, & 0, & 0, & \ldots, & 2).
    \end{matrix}
\end{equation*}
    These all have length $\sqrt{8}$ and are clearly linearly independent from the first minimal vectors and to each other. Therefore, the last $s$ successive minima are 
    \begin{align*}
        \lambda_6(\CurL_\CurP) = \ldots = \lambda_{6+s-1}(\CurL_\CurP) = \sqrt{8}.
    \end{align*}
    
\end{proof}

Now that we have thoroughly discussed the cases where $\CurP$ consists 
of ramified and inert places. Let us explore what happens when 
$\CurP$ contains only the rational places of $F$ and the genus is $g\geq 3$. 
As we mention in Subsection \ref{eg:HyperellipticFuncFieldLattices}, when 
$\CurP$ contains all extensions of at least two rational places that split, the lattice is not well-rounded. 
So, naturally we are interested in what the shortest vectors in these lattices look like. To this end, we first fix some notation. Let 
\begin{align*}
    \CurP = (P_1, P_2, \ldots, P_r, S_{11}, S_{12}, S_{21}, S_{22}\ldots, 
    S_{t1}, S_{t2}),
\end{align*}
where $P_i$ for $1\leq i \leq r$, as before, represent the rational 
ramified places, and $S_{j1}, S_{j2}$ for $1\leq j \leq t$ represent the 
two extensions of a rational place $S_j$ in $\mathbb{F}_q(x)$ which splits completely in $F$. The resulting lattice $\CurL_\CurP$ then has 
rank $r+2t-1$.

By Proposition \ref{ff lat norm} the minimum distance satisfies 
$d(\CurL_\CurP) = 2$. In \cite{atecs2017lattices} 
it is detailed that the vectors that reach this minimum are those 
which are associated to principal divisors of the form 
\begin{align*}
    S_{i1} + S_{i2} - S_{j1} - S_{j2},
\end{align*}
for $1\leq i\neq j \leq t$. There are at most $t-1$ linearly independent vectors of this form in the lattice $\CurL_\CurP$. 

These vectors all have length $2$ which means that the next shortest vectors would have length $\sqrt{6}$. The only principal divisors whose associated vectors have length $\sqrt{6}$ are of the from 
\begin{align*}
    \pm (2P_i - S_{j1} - S_{j2}),
\end{align*}
for $1\leq i \leq r$ and $1\leq j \leq t$. We can choose $r$ of these 
vectors which are linearly independent to the first $t-1$ minimum vectors discussed.

Now we are missing $t$ vectors to reach the rank of the lattice. 
Notice how in each case so far the two extensions $S_{j1}, S_{j2}$ of 
a splitting place $S_j$ always appear together, with the same 
valuation.
This can be explained by the following fact.

\begin{lemma}\cite[Propopsition 4.4.11]{atecs2017lattices}
    Let $F/\mathbb{F}_q$ be a hyperelliptic function field with 
    genus $g$ and let 
    $z\in \CurO^*_\CurP$. If the length of the vector $||\Phi_\CurP(z)||$ satisfies 
    \begin{align*}
        ||\Phi_\CurP(z)|| < \sqrt{2g+2}
    \end{align*}
    then $z$ belongs to $\mathbb{F}_q(x)$.
\end{lemma}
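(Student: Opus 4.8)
The plan is to convert the hypothesis on the Euclidean length of $\Phi_\CurP(z)$ into a bound on the degree of $z$ as a function, and then to exploit the fact that on a hyperelliptic function field a function of small degree which is \emph{not} a rational function of $x$ would force the genus to be too large. First I would dispose of the trivial case $z\in\F_q^*$, where $\Phi_\CurP(z)=0$ and the claim is immediate, and so assume $z$ is non-constant and set $n:=[F:\F_q(z)]=\deg(z)$, the degree of the pole divisor of $z$. Applying Proposition \ref{ff lat norm}(ii), namely $\|\Phi_\CurP(z)\|\ge\sqrt{2\deg(z)}$, together with the hypothesis $\|\Phi_\CurP(z)\|<\sqrt{2g+2}$, yields $\sqrt{2n}<\sqrt{2g+2}$, hence $n\le g$.

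Next I would argue by contradiction, assuming $z\notin\F_q(x)$. Because $[F:\F_q(x)]=2$, there is no field strictly between $\F_q(x)$ and $F$, so $\F_q(x)(z)=F$; in other words $F=\F_q(x)\cdot\F_q(z)$ is the compositum of the two rational subfields $\F_q(x)$ and $\F_q(z)$, each of genus $0$ and with full constant field $\F_q$. Now I would invoke Castelnuovo's inequality (see \cite{stichtenoth}): for a compositum $F=F_1F_2$ with $[F:F_i]=n_i$ and genus $g_i$ of $F_i$, one has
\[
g\le n_1g_1+n_2g_2+(n_1-1)(n_2-1).
\]
With $F_1=\F_q(x)$, $n_1=2$, $g_1=0$ and $F_2=\F_q(z)$, $n_2=n$, $g_2=0$, this reads $g\le(2-1)(n-1)=n-1$, i.e. $n\ge g+1$. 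This contradicts $n\le g$ from the first step, so the assumption $z\notin\F_q(x)$ is untenable and $z\in\F_q(x)$, as required.

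The only non-formal step, and hence the main obstacle, is the degree bound $[F:\F_q(z)]\ge g+1$ for $z\notin\F_q(x)$: this is the geometric heart of the statement, encoding the classical fact that a hyperelliptic curve of genus $g$ carries a unique $g^1_2$ (the class of the hyperelliptic map $x$) and that every linear system of degree $\le g$ is composed with it, so that any function of degree $\le g$ factors through $\F_q(x)$. I would route this through Castelnuovo's inequality as above, since that keeps the argument self-contained and avoids appealing to Clifford's theorem or the geometry of the canonical embedding; the one point requiring care is checking the hypotheses of the inequality, namely that $F$ really is the compositum $\F_q(x)\cdot\F_q(z)$ and that $\F_q$ is the exact constant field of each rational subfield, both of which hold in the present setting.
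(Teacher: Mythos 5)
Your argument is correct. The paper itself does not prove this lemma --- it is imported verbatim from \cite[Proposition 4.4.11]{atecs2017lattices} --- so there is no in-paper proof to compare against, but your route is the natural one and is essentially the standard argument: Proposition \ref{ff lat norm}(ii) turns the length hypothesis into $2\deg(z)\le\|\Phi_\CurP(z)\|^2<2g+2$, hence $[F:\F_q(z)]=\deg(z)\le g$ for non-constant $z$; and if $z\notin\F_q(x)$ then, $[F:\F_q(x)]=2$ being prime, $F=\F_q(x)\cdot\F_q(z)$, so Castelnuovo's inequality with $g_1=g_2=0$, $n_1=2$, $n_2=\deg(z)$ gives $g\le\deg(z)-1$, a contradiction. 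The two hypotheses you flag are indeed the only ones needing verification and both hold here: $\F_q$ is the full (perfect) constant field of $F$ in the standing setup $u^2=f(x)$ with $f$ square-free, and the constant field of any rational subfield $\F_q(z)$ is again $\F_q$. This is exactly the statement that the $g^1_2$ on a hyperelliptic curve is unique and that every function of degree at most $g$ factors through it, so your proof can stand as a self-contained replacement for the external citation.
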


This lemma particularly implies that any element in $\CurO^*_\CurP$ 
that is not in the base field is mapped to a vector in the lattice 
with length greater or equal to $\sqrt{2g+2}$. Recall that the 
conorm of the place $S_j$ is $\mathrm{con}_{F/K(x)}(S_j) = 
S_{j1}+S_{j2}$. This means that elements in the base field $z \in \mathbb{F}_q(x)$, which by the above lemma are those with shorter length, will satisfy $v_{S_{j1}}(z) = v_{S_{j2}}(z)$.

It appears that the vectors linearly independent from the shortest ones we have already discussed are those with unequal valuations at the places $S_{j1}$ and $S_{j2}$ for $1 \leq j \leq t$. Therefore, these vectors are not in the base field and, according to the above lemma, have a length greater than or equal to $\sqrt{2g+2}$.

These observations lead us to the following theorem on the successive minima of $\CurL_\CurP$.

\begin{theorem}
    Let $F/\F_q$ be an algebraic function field of genus $g\geq 3$ and suppose that $\CurP$ consists of only rational places. Then the following hold.
    \begin{align*} 
    \begin{cases}
        \lambda_i(\CurL_\CurP) = 2, &\text{ for } 1\leq i \leq t-1,\\
        \lambda_i(\CurL_\CurP) = \sqrt{6}, &\text{ for } t\leq i \leq 
        t+r,\\
        \lambda_i(\CurL_\CurP) \geq \sqrt{2g+2},
        &\text{ for } t+r \leq i \leq 2t+r-1.
    \end{cases}
\end{align*}
\end{theorem}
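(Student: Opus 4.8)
The plan is to read off the successive minima in three layers, using throughout that $\CurL_\CurP$ is even (Proposition \ref{ff lat norm}) and the length bound of the preceding lemma. Set $W := \{ v \in \R^{r+2t} : \sum_i v_i = 0,\ v_{S_{j1}} = v_{S_{j2}} \text{ for } 1\le j \le t \}$; a direct count gives $\dim W = r+t-1$. I will exhibit $r+t-1$ short vectors spanning $W$, and then use the lemma to force everything outside $W$ to be long.

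First I would classify the vectors of length $2$. By Proposition \ref{ff lat norm}(ii), a non-constant $z$ lies over a proper extension (as $g\ge 3$ rules out degree-one functions), and $\|\Phi_\CurP(z)\| = 2 = \sqrt{2\deg(z)}$ forces $\deg(z)=2$ with pole and zero completely split; since the only index-two rational subfield of the hyperelliptic field $F$ is $\F_q(x)$, the zero and pole of $z$ are split rational places. Hence $\Phi_\CurP(z)$ has the form $\pm(S_{i1}+S_{i2}-S_{j1}-S_{j2})$ and, in particular, the ramified $P_i$ (which do not split) never occur. These vectors lie in the $(t-1)$-dimensional space of split-pair differences, so there are exactly $t-1$ independent minimal vectors, giving $\lambda_1=\dots=\lambda_{t-1}=2$. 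Since $\CurL_\CurP$ is even, the next possible length is $\sqrt{6}$. Because all degree-two fibres of the hyperelliptic map are linearly equivalent, $2P_i \sim S_{j1}+S_{j2}$, so each divisor $2P_i - S_{j1}-S_{j2}$ is principal; fixing $S_1$, the $r$ vectors $\Phi_\CurP(2P_i - S_{11}-S_{12})$ have length $\sqrt{6}$ and are independent from one another (distinct $+2$ entries) and from the length-$2$ vectors (which vanish on all $P_i$-coordinates).

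Next I would establish the upper bound through the preceding lemma: any $z$ with $\|\Phi_\CurP(z)\| < \sqrt{2g+2}$ lies in $\F_q(x)$, and since $\mathrm{con}_{F/\F_q(x)}(S_j)=S_{j1}+S_{j2}$ such a $z$ satisfies $v_{S_{j1}}(z)=v_{S_{j2}}(z)$ for all $j$; thus every lattice vector shorter than $\sqrt{2g+2}$ lies in $W$. As $g\ge 3$ gives $\sqrt{2g+2} > \sqrt{6}$, all vectors of length $\le \sqrt{6}$ lie in $W$, so there are at most $\dim W = r+t-1$ independent ones. A short linear-algebra check shows the $r+t-1$ vectors found above already span $W$, so they exhaust the short vectors and $\lambda_t=\dots=\lambda_{t+r-1}=\sqrt{6}$. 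Finally, since $\CurL_\CurP$ has rank $r+2t-1 > \dim W$, any family of $t+r$ or more independent lattice vectors must leave $W$; by the contrapositive of the lemma such a vector has length $\ge\sqrt{2g+2}$, whence $\lambda_{t+r}\ge\dots\ge\lambda_{2t+r-1}\ge\sqrt{2g+2}$. The hard part is not this final bound, which the lemma hands us, but the bookkeeping at the bottom: proving that the length-$2$ vectors are precisely the $S_{i1}+S_{i2}-S_{j1}-S_{j2}$, and that together with the $2P_i-S_{j1}-S_{j2}$ they span all of $W$, so that no short vectors are overlooked and the multiplicities $t-1$, $r$, and $t$ come out exactly.
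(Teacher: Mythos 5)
Your proposal is correct and follows essentially the same route as the paper: the paper's justification is the discussion preceding the theorem, which likewise classifies the length-$2$ vectors as $S_{i1}+S_{i2}-S_{j1}-S_{j2}$, the length-$\sqrt{6}$ vectors as $\pm(2P_i-S_{j1}-S_{j2})$, and then invokes the cited lemma (short vectors come from $\F_q(x)$, hence have equal valuations at split pairs) to force the remaining $t$ minima up to $\sqrt{2g+2}$. Your explicit subspace $W$ with $\dim W=r+t-1$ makes the "at most $r+t-1$ independent short vectors" step cleaner than the paper's informal remark, and your index range $t\le i\le t+r-1$ for the $\sqrt{6}$ layer is the consistent one (the theorem's stated range $t\le i\le t+r$ overlaps the third case).
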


\section{Determinant}\label{sec:Determinant}

Recall that the determinant of a function field lattice, as described in Proposition \ref{prop:ff_lat_det}, depends on the number $h_0$, which divides the class number $h$. Therefore, our focus is on determining $h_0$, which, as discussed, represents the number of classes in $\mathrm{Div}_\CurP^0(F)/ \mathrm{Princ}_\CurP(F)$. Here, $\mathrm{Div}_\CurP^0(F)$ and $\mathrm{Princ}_\CurP(F)$ refer to the degree-zero divisors and principal divisors whose support is a subset of $\CurP$, respectively. For rational and elliptic function field lattices, determining $h_0$ is straightforward, but it becomes more challenging in the hyperelliptic case. In this section, we consider the cases of hyperelliptic function fields where we were able to determine $h_0$.

To this end, let us first recall the definition of semi-reduced divisors, which play an important role in understanding $h_0$.
 \begin{definition}
\label{defn:reducedDivisors}
    Let $F=K(x,y)$ be a hyperelliptic function field over 
    an algebraically closed field $K$. A 
    degree zero divisor $D \in \mathrm{Div}^0(F)$ is called \textbf{reduced} if
    \begin{enumerate}
        \item $v_P(D) \geq 0 $ for all $ P \in \mathbb{P}(F) 
        \backslash \{ P_\infty\} $.
        \item If $P \cap K(x)$ is ramified in $F$ then 
        $v_P(D) \in \{ 0, 1\}$.
        \item If $P \cap K(x)$ splits in $F$ such that 
        $\mathrm{con}_{F/K(x)}(P) = P_1 + P_2$ then 
        $v_{P_1}(D) > 0 $ implies that $v_{P_2}(D) = 0 $.
        \item $\frac{1}{2} \mathrm{deg}(D) \leq g$.
    \end{enumerate}
    When only the first three conditions are fulfilled then the 
    divisor is called \textbf{semi-reduced}.
\end{definition}

 Note that in the above definition the inert places in $F$ are not mentioned. This is because the original definition assumes $K$ is algebraically closed. When the base field is not algebraically closed, for an inert place $P$, the valuation of any reduced divisor $D$ is $v_P(D) = 0$.

\begin{proposition}
    \label{prop:InertReducedDiv}
  Let $D$ be a reduced divisor then $v_P(D)=0$ whenever $P$ is inert.  
\end{proposition}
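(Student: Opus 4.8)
The plan is to reduce the statement to the algebraically closed setting in which Definition \ref{defn:reducedDivisors} is actually phrased, by passing to a constant field extension. Since an inert place has no analogue over an algebraically closed field, the only sensible reading of ``$D$ is reduced'' over $F/\F_q$ is that the extended divisor $\overline{D} = \mathrm{con}_{\overline{F}/F}(D)$ is reduced in $\overline{F} = F\cdot\overline{\F_q}$ in the sense of the definition, and this is the convention I would fix at the outset. The one structural fact I would rely on throughout is that constant field extensions are unramified, so that valuations are transported with ramification index one.

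First I would work out the local picture of a finite inert place. A place $P$ of degree $2$ lies over a rational place $p$ of $\F_q(x)$ with $e(P\mid p)=1$ and $f(P\mid p)=2$; over $\overline{\F_q}$ such a degree-two place splits into two places $\overline{P}_1,\overline{P}_2$ of degree one, so that $\mathrm{con}_{\overline{F}/F}(P) = \overline{P}_1 + \overline{P}_2$ (in characteristic $\neq 2$ this is just the statement that $f(a)$, a non-square in $\F_q$, becomes a square in $\overline{\F_q}$, but the degree argument is characteristic-free). Because ramification indices are preserved under constant field extension, the underlying rational place $\overline{p}$ of $\overline{\F_q}(x)$ is \emph{split} rather than ramified in $\overline{F}$, and the unramifiedness gives
\[
    v_{\overline{P}_1}(\overline{D}) = v_{\overline{P}_2}(\overline{D}) = v_P(D).
\]

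With this in hand the conclusion follows from two of the reducedness conditions applied to $\overline{D}$ in $\overline{F}$. Since $P$ is finite, $\overline{P}_i \neq \overline{P}_\infty$, so condition (1) gives $v_{\overline{P}_i}(\overline{D}) \geq 0$ and hence $v_P(D)\geq 0$. Since $\overline{p}$ splits with $\mathrm{con}_{\overline{F}/F}(P)=\overline{P}_1+\overline{P}_2$, condition (3) applies to the pair $\overline{P}_1,\overline{P}_2$: if $v_{\overline{P}_1}(\overline{D})>0$ then $v_{\overline{P}_2}(\overline{D})=0$. But the two valuations are equal, so a strictly positive common value is impossible; combined with non-negativity this forces $v_{\overline{P}_1}(\overline{D})=v_{\overline{P}_2}(\overline{D})=0$, that is, $v_P(D)=0$.

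I expect the main obstacle to be conceptual rather than computational: one must justify that the correct extension of Definition \ref{defn:reducedDivisors} to a non-algebraically closed constant field is precisely through constant field extension, and then verify carefully that an inert place really splits (rather than ramifies) into two extensions carrying equal valuations. Both the splitting behaviour and the preservation of valuations rest on the standard fact that constant field extensions are unramified together with the degree formula for how a place decomposes; once these are pinned down, the divisor-theoretic argument above is immediate.
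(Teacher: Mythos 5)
Your argument is correct, but it takes a genuinely different route from the paper's. The paper works with divisor classes: writing $P'$ for the inert place and $P = P'\cap K(x)$, it observes that $P'-2P_\infty$ is principal (the conorm of the principal divisor $P-P_\infty$ of the rational subfield), subtracts $v_{P'}(D)\,(P'-2P_\infty)$ from $D$ to obtain an equivalent divisor with zero valuation at $P'$ that is still reduced, and the vanishing of $v_{P'}(D)$ itself then rests on the uniqueness of the reduced representative in a class (Theorem \ref{thm:UniquenessReducedDivHyper}, quoted immediately afterwards, and left implicit in the paper's write-up). You instead stay entirely at the level of Definition \ref{defn:reducedDivisors}: you pass to the constant field extension $F\cdot\overline{\mathbb{F}}_q$, where the inert place splits into two degree-one places carrying the same valuation as $P$ (constant field extensions being unramified and degree-preserving in the right way), and condition (3) of the definition forbids both of these equal valuations from being positive. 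Your version is more self-contained --- it needs no uniqueness theorem and no manipulation of equivalence classes --- and it makes explicit a point the paper glosses over: the definition of reducedness is only stated over an algebraically closed constant field, so a convention (yours: test reducedness after constant field extension) is needed for the statement to be meaningful at all, and under a naive reading that simply ignores inert places the proposition would be false. What the paper's argument buys in exchange is precisely what is used later in Section \ref{sec:Determinant}: an explicit recipe for normalizing a class representative so that inert places do not appear, which is the operational content of the proposition in the determinant computation.
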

\begin{proof}
    Let $P'$ be an inert place of $F$ and $P$ be the place such that 
    $P=P'\cap K(x)$. 
    We consider the divisor $P-P_{\infty}$, which is a principal 
    divisor in $K(x)$. Then its conorm $P'-2P_{\infty}$ is also principal. Now assume there is a divisor class $[D]$ in  $\mathrm{Cl}^0(F)$ 
    with a reduced divisor representative $D$ such that $v_{P'}(D)>0$, then there is an equivalent divisor $D' = D-v_{P'}(D)(P-2P_\infty)$ with $v_P(D')=0$. Then $D'$ is still a reduced divisor. 
\end{proof}

\begin{theorem}{\cite{salvador2006topics}}\label{thm:UniquenessReducedDivHyper}
     Let $F=K(x,y)$ be a hyperelliptic function field with 
     Jacobian $\mathrm{Cl}^0(F)$. Then every class in $\mathrm{Cl}^0(F)$ 
     contains a unique reduced divisor $D$.
\end{theorem}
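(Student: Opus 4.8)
The plan is to establish existence and uniqueness separately, exploiting the special geometry of the odd-degree model. Since $\deg f = 2g+1$, there is a unique place $P_\infty$ lying over $x=\infty$, and it is fixed by the hyperelliptic involution $\sigma\colon y\mapsto -y$; moreover, for each $a\in K$ the principal divisor of $x-a$ is $\dv(x-a)=P+\sigma(P)-2P_\infty$, degenerating to $2P-2P_\infty$ when $a$ is a branch point. Condition (1) of Definition \ref{defn:reducedDivisors} lets me write every reduced divisor uniquely as $D=\mathfrak{a}-\deg(\mathfrak{a})\,P_\infty$ with $\mathfrak{a}\ge 0$ supported on affine places, and, reading $\tfrac12\deg(D)$ as the degree of the affine part, condition (4) becomes $\deg(\mathfrak{a})\le g$. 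The principal divisors $\dv(x-a)$ will be the engine of the reduction.

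For existence, I start from any representative $D_0$ of the given class. Riemann--Roch applied to $D_0+gP_\infty$ gives $\ell(D_0+gP_\infty)\ge \deg(D_0+gP_\infty)+1-g = 1$, so there is a nonzero $h$ with $\dv(h)+D_0+gP_\infty\ge 0$; replacing $D_0$ by the equivalent $D_0+\dv(h)$, I may assume $D_0\ge -gP_\infty$, i.e. $D_0=\mathfrak{a}-\deg(\mathfrak{a})P_\infty$ with $\mathfrak{a}\ge 0$ affine and $\deg(\mathfrak{a})\le g$. To enforce conditions (2)--(3), whenever $\mathfrak{a}$ still contains a split pair $P+\sigma(P)$ or a branch point of multiplicity $\ge 2$, I subtract the corresponding $\dv(x-a)$. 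Each such step keeps $\mathfrak{a}$ effective and strictly lowers $\deg(\mathfrak{a})$, so after finitely many steps $\mathfrak{a}$ is semi-reduced of degree $\le g$, i.e. $D_0$ is reduced.

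For uniqueness, suppose $D_1=\mathfrak{a}_1-d_1P_\infty$ and $D_2=\mathfrak{a}_2-d_2P_\infty$ are reduced with $D_1\sim D_2$; relabel so that $d_1\ge d_2$ and write $\dv(h)=D_1-D_2$. Setting $u=1/h$, one computes $\dv(u)+\mathfrak{a}_1=\mathfrak{a}_2+(d_1-d_2)P_\infty\ge 0$, so $u\in L(\mathfrak{a}_1)$. The entire statement therefore collapses to the following claim: for a semi-reduced effective affine divisor $\mathfrak{a}$ with $\deg(\mathfrak{a})\le g$ one has $L(\mathfrak{a})=K$. Granting the claim, $u$ is a constant, so $\dv(h)=0$ and $D_1=D_2$.

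Proving this claim is where the real work lies, and it is the step I expect to be the main obstacle. A nonconstant $u\in L(\mathfrak{a})$ is regular at $P_\infty$ with pole divisor $(u)_\infty\le\mathfrak{a}$ of degree $[F:K(u)]\le\deg(\mathfrak{a})\le g$. The decisive feature is that semi-reducedness forbids a split pair $P+\sigma(P)$ inside $\mathfrak{a}$: precisely such pairs support the nonconstant functions $1/(x-a)\in L(P+\sigma(P))$ that would otherwise enlarge $L(\mathfrak{a})$, so the semi-reduced hypothesis is exactly what removes these. I would make this precise through the involution, passing to the norm $N(u)=u\,\sigma(u)\in K(x)$, whose pole divisor is $\sigma$-invariant, hence a conorm from $\mathbb{P}^1$ and bounded by $\mathfrak{a}+\sigma(\mathfrak{a})$; comparing degrees against the bound $\deg(\mathfrak{a})\le g$ and against the canonical class $W\sim(2g-2)P_\infty$ via Riemann--Roch should force $(u)_\infty=0$. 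This final elimination of every nonconstant $u$ is the crux, since it is the only place where the genus bound (4) and the semi-reduced conditions (2)--(3) are used together; by contrast, existence and the reduction of uniqueness to the claim are comparatively routine.
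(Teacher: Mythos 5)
The paper offers no proof of this statement -- it is imported verbatim from Villa Salvador's book -- so there is nothing internal to compare against; I can only judge your argument on its own terms. Your existence argument is correct (Riemann--Roch on $D_0+gP_\infty$ followed by the descent via $\dv(x-a)$ terminates in a reduced representative), and your reduction of uniqueness to the single claim that $L(\mathfrak{a})=K$ for every semi-reduced effective affine $\mathfrak{a}$ with $\deg\mathfrak{a}\le g$ is clean and the claim is true. But that claim is precisely the content of the theorem, and you do not prove it: ``comparing degrees \dots should force $(u)_\infty=0$'' is a placeholder, not an argument, and you acknowledge as much. Worse, the norm route as sketched has a concrete hole: the pole divisor of $N(u)=u\,\sigma(u)$ is only bounded by $(u)_\infty+(\sigma u)_\infty$, not equal to it, because zeros of $\sigma(u)$ can cancel poles of $u$ in the product (in the extreme case $N(u)$ is a nonzero constant while $u$ is not), so a degree count on $N(u)$ never sees the semi-reduced hypothesis and cannot by itself rule out nonconstant $u$. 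Since this is the only place where conditions (2)--(4) of Definition \ref{defn:reducedDivisors} do any work, the proof as written establishes existence but not uniqueness.

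The gap is fixable along lines close to what you gesture at, but the $\sigma$-symmetrization has to be applied to the divisors rather than to $u$. From $D_1\sim D_2$ and the principality of $P+\sigma(P)-2P_\infty$ one gets $\mathfrak{a}_1+\sigma(\mathfrak{a}_2)-(d_1+d_2)P_\infty\sim 0$, so the corresponding function lies in $L\bigl((d_1+d_2)P_\infty\bigr)$ with $d_1+d_2\le 2g<2g+1=-v_{P_\infty}(y)$; hence it is a polynomial in $x$, its zero divisor $\mathfrak{a}_1+\sigma(\mathfrak{a}_2)$ is a sum of conorms $P+\sigma(P)$ and $2P_{\mathrm{ram}}$, and matching these blocks against conditions (2)--(3) for $\mathfrak{a}_1$ and $\mathfrak{a}_2$ forces $\mathfrak{a}_1=\mathfrak{a}_2$. (Alternatively, geometric Riemann--Roch: a semi-reduced $\mathfrak{a}$ of degree $d\le g$ pushes forward to $d$ conditions in general position on the rational normal curve of degree $g-1$, so $\ell(\mathfrak{a})=1$.) Some such argument must be supplied; as it stands, the crux of the theorem is asserted rather than proved.
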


Theorem \ref{thm:UniquenessReducedDivHyper} allows us count the classes in the class zero group by counting the possible reduced divisors. While this is generally quite difficult, we can significantly narrow our search by considering only divisors with support on $\mathcal{P}$, as we are focused on the number of classes in $\mathrm{Div}_\mathcal{P}^0(F) / \mathrm{Princ}_\mathcal{P}(F)$. In our case, $\mathcal{P}$ consists only of ramified and inert places. This simplifies the counting process because inert places do not appear at all, and ramified places appear with multiplicity of at most one in any reduced divisor. The next goal is to verify whether each reduced divisor represents a class in $\mathrm{Div}^0_\mathcal{P}(F) / \mathrm{Princ}_\mathcal{P}(F)$ and vice versa. If this holds, we can determine $h_0$.

 \begin{lemma}\label{lemma:form_of_reduced_div_hyp_ff}
     Let $\mathcal{P}$ be the set of $r$ rational ramified places
     and $s$ inert places of degree 2.
     Then each class in $\mathrm{Div}_\CurP^0(F) / \mathrm{Princ}_\CurP(F)$ contains a semi-reduced 
     divisor of the form $$D = 
     \sum_{i=1}^{r-1} a_i P_i - (\sum_{i=1}^{r-1} a_i) P_\infty$$
     for $a_i \in \{0,1\}$.
\end{lemma}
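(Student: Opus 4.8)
The plan is to produce, inside each class, an explicit representative of the asserted shape by a two-stage reduction that uses only the principal divisors already computed in Subsection~\ref{eg:HyperellipticFuncFieldLattices}. Recall from there that
\[
(x-\alpha_i)^F = 2P_i - 2P_\infty, \qquad (x-\beta_j)^F = Q_j - 2P_\infty,
\]
and that both lie in $\mathrm{Princ}_\CurP(F)$, since their supports are contained in $\CurP$. These are the only relations I would need. Any further principal divisor with support in $\CurP$ (for instance $(u)^F$ in the split case) can only enlarge $\mathrm{Princ}_\CurP(F)$, hence can only identify classes; it therefore does not obstruct the existence of a representative of the claimed form.

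I would start with an arbitrary element $D = c_\infty P_\infty + \sum_{i=1}^{r-1} c_i P_i + \sum_{j=1}^s d_j Q_j \in \mathrm{Div}^0_\CurP(F)$, so that $c_\infty + \sum_i c_i + 2\sum_j d_j = 0$ using $\deg P_i = 1$ and $\deg Q_j = 2$. First I would subtract $\sum_j d_j\,(x-\beta_j)^F$, which removes every inert term and leaves a divisor supported only on the ramified places; the degree-zero constraint then forces the new coefficient of $P_\infty$ to equal $-\sum_i c_i$. Next, for each $i$ I would write $c_i = 2k_i + a_i$ with $a_i \in \{0,1\}$ and subtract $k_i\,(x-\alpha_i)^F = k_i\,(2P_i - 2P_\infty)$, which replaces the coefficient $c_i$ by $a_i$. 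After both steps the divisor is congruent modulo $\mathrm{Princ}_\CurP(F)$ to $\sum_{i=1}^{r-1} a_i P_i + c'_\infty P_\infty$, and degree zero pins down $c'_\infty = -\sum_i a_i$, which is exactly the form in the statement.

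It then remains to verify that $D = \sum_{i=1}^{r-1} a_i P_i - (\sum_i a_i) P_\infty$ satisfies the first three conditions of Definition~\ref{defn:reducedDivisors}. Condition (1) holds because every coefficient away from $P_\infty$ is $a_i \ge 0$ (and the inert coefficients are $0$); condition (2) holds because the ramified coefficients are $a_i \in \{0,1\}$ by construction; condition (3) is vacuous, as $\CurP$ contains no split places. By the remark following Definition~\ref{defn:reducedDivisors} together with Proposition~\ref{prop:InertReducedDiv}, the absence of inert places is the correct requirement over the non-algebraically-closed field $\F_q$, so $D$ is semi-reduced.

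The reduction itself is elementary; the only points that require care are tracking the coefficient of $P_\infty$ through the degree-zero constraint rather than directly, and confirming that ``semi-reduced'' here means the $\F_q$-version in which inert places are absent. I do not aim to establish uniqueness at this stage: the lemma asserts only the existence of a divisor of this form in each class, which is all the reduction yields. The genuinely harder step, which this lemma deliberately postpones, will be to decide which tuples $(a_1,\dots,a_{r-1})$ give distinct classes, and thereby compute $h_0$; that will require Theorem~\ref{thm:UniquenessReducedDivHyper} and a finer analysis of the remaining principal relations among the divisors $P_i - P_\infty$.
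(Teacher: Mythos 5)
Your proposal is correct and follows essentially the same two-step reduction as the paper: first clear the inert coefficients using the principal divisors $Q_j-2P_\infty$, then reduce the ramified coefficients modulo the principal divisors $2P_i-2P_\infty$ (your $c_i=2k_i+a_i$ is exactly the paper's $\lfloor a_i/2\rfloor$ step), and finally observe the result is semi-reduced. Your version is if anything slightly more careful, in that it tracks the $P_\infty$-coefficient via the degree-zero constraint and grounds the inert-place step directly in the explicit principal divisors rather than in Proposition~\ref{prop:InertReducedDiv}.
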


\begin{proof}
    Let $[D]$ be a divisor class in $\mathrm{Div}_\mathcal{P}^0(F)$. Then by Proposition 
    \ref{prop:InertReducedDiv} there is a principal divisor 
    $D_1 \in \mathrm{Princ}_{\mathcal{P}}(F)$ such that 
    $D-D_1$ does not have support on any of the inert places in 
    $\mathcal{P}$. 
    Similarly, we know that the divisor $2P_i-2P_\infty$ in $F$ is 
    principal because it is the conorm of $P_i-P_\infty$ in $K$. 
    Then for $1\leq i \leq r-1$ let $a_i = v_{P_i}(D) 
    \in \mathbb{Z}$. This means the divisor $$D_2 = \sum_{i=1}^{r-1}  \lfloor \frac{a_i}{2} 
    \rfloor (2P_i-2P_\infty)$$ is principal and 
    $D-D_1-D_2$ is of the desired form. 
    Since $D_1$ and $D_2$ are principal, the class $[D] \in \mathrm{Div}_\CurP^0(F) / \mathrm{Princ}_\CurP(F)$ is equal to the class $[D-D_1-D_2]$. Further, this divisor 
    is semi-reduced because it only has support on ramified places where 
    the valuation is either $1$ or $0$. This completes the proof.
\end{proof}

This is the first step in counting the different classes. The important thing to 
check is whether any of these semi-reduced or reduced divisors are 
equivalent to each other in $\mathrm{Div}_\CurP^0(F) / \mathrm{Princ}_\CurP(F)$. This is addressed in the proof of the following 
theorem.

\begin{theorem}\label{thm:Hyp_ell_ff_lat_det_ram}
    Let $F = \F_q(x,y)$ be a hyperelliptic function field with 
    defining equation $y^2 = f(x)$ for a square free polynomial of degree 
    $\mathrm{deg}(f(x)) = 2g+1$.
    Let $\mathcal{P} = (P_\infty, P_2, \dots , P_r, Q_1, 
    \dots Q_s)$ where $P_i$ are all ramified places of 
    degree $1$ and $Q_j$ are all inert places of degree $2$.
    The determinant of the function field 
    lattice $\mathcal{L}_\mathcal{P}$ is 
    $\mathrm{det}(\mathcal{L}_\mathcal{P}) = 
    \sqrt{n+1}\cdot 2^s \cdot h_0$ where $h_0$ is 
    given by
    \begin{align*}
        \begin{cases}
            h_0 = 2^{r-1} &\text{ when } f(x) \text{ does not split into 
            linear factors },\\
            h_0 = 2^{2g}  &\text{ when } f(x) \text{ splits into 
            linear factors. } 
        \end{cases}
    \end{align*}

\end{theorem}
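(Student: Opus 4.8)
The determinant formula itself is immediate from Proposition \ref{prop:ff_lat_det}(ii): the $r$ ramified places $P_\infty,P_2,\dots,P_r$ have degree $1$ and the $s$ inert places $Q_1,\dots,Q_s$ have degree $2$, so $d_0d_1\cdots d_n = 1^{r}\cdot 2^{s}=2^{s}$ and $d=\gcd(1,\dots,1,2,\dots,2)=1$; hence $\det(\CurL_\CurP)=\sqrt{n+1}\cdot 2^{s}\cdot h_0$. The whole content is therefore the computation of $h_0=|\Div_\CurP^0(F)/\Princ_\CurP(F)|$, and my plan is to carry this out directly as a quotient of free abelian groups, using the explicit description of $\CurO_\CurP^*$ recalled in Subsection \ref{eg:HyperellipticFuncFieldLattices}.

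First I would fix the basis of $\Div_\CurP^0(F)$ given by the $n=r+s-1$ degree-zero divisors $P_i-P_\infty$ (for $2\le i\le r$) and $Q_j-2P_\infty$ (for $1\le j\le s$): every degree-zero divisor supported on $\CurP$ is uniquely an integer combination of these. Next, since $\Princ_\CurP(F)=\{(z):z\in\CurO_\CurP^*\}$ and $\CurO_\CurP^*$ is freely generated modulo $\F_q^*$ by the elements listed in Subsection \ref{eg:HyperellipticFuncFieldLattices}, the group $\Princ_\CurP(F)$ is generated by the principal divisors $(x-\alpha_i)=2(P_i-P_\infty)$ and $(x-\beta_j)=Q_j-2P_\infty$, together with $(u)=\sum_{i=2}^{r}(P_i-P_\infty)$ in the case where $f$ splits. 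With these two presentations in hand the quotient is read off by a Smith-normal-form computation: the $s$ inert generators $Q_j-2P_\infty$ coincide with basis vectors and contribute nothing, while each ramified basis vector $P_i-P_\infty$ survives only until it is doubled. Thus in the cases where $u\notin\CurO_\CurP^*$ (namely $\ch(F)=2$, or $\ch(F)\ne2$ with $f$ not splitting) one obtains $\Div_\CurP^0(F)/\Princ_\CurP(F)\cong(\Z/2)^{r-1}$, so $h_0=2^{r-1}$.

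When $f$ splits we have $r=2g+2$, hence $r-1=2g+1$ finite ramified places, and the extra generator $(u)=\sum_{i=2}^{r}(P_i-P_\infty)$ imposes precisely the single $\F_2$-relation $\sum_i\overline{P_i-P_\infty}=0$ on the group $(\Z/2)^{2g+1}$. Factoring out this order-two element gives $(\Z/2)^{2g+1}/\langle(1,\dots,1)\rangle\cong(\Z/2)^{2g}$, so $h_0=2^{2g}$, which together with the determinant formula yields the stated value of $\det(\CurL_\CurP)$ in each case.

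The step that genuinely uses the hyperelliptic machinery above — and the one I expect to be the main obstacle — is the assertion that the divisors just listed generate \emph{all} of $\Princ_\CurP(F)$, i.e. that there are no further principal divisors supported on $\CurP$ creating hidden relations among the classes $\overline{P_i-P_\infty}$. This is exactly what guarantees, in the non-split case, that none of the $2^{r-1}$ semi-reduced representatives $\sum_{i\in S}(P_i-P_\infty)$ furnished by Lemma \ref{lemma:form_of_reduced_div_hyp_ff} collapse, and in the split case that the only collapse is the involution $S\leftrightarrow S^{c}$ coming from $(u)$. One can secure this either from the cited computation of $\CurO_\CurP^*$, or intrinsically from the uniqueness of reduced divisors (Theorem \ref{thm:UniquenessReducedDivHyper}): a representative $\sum_{i\in S}(P_i-P_\infty)$ with $|S|\le g$ is already reduced, hence nontrivial in $\mathrm{Cl}^0(F)$ unless $S=\varnothing$, while for $|S|>g$ one reduces via $2(P_i-P_\infty)\sim 0$ (and $(u)$, when it is available over $\CurP$) to a complementary reduced divisor, so that $\sum_{i\in S}(P_i-P_\infty)\sim 0$ forces $S=\varnothing$ in the non-split case and $S\in\{\varnothing,\{2,\dots,r\}\}$ in the split case. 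The direct generator computation is the cleaner route and makes the factor-of-two drop from $2^{2g+1}$ to $2^{2g}$ transparent as the effect of adjoining $u$.
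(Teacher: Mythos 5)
Your proposal is correct, and for the essential content --- the computation of $h_0$ --- it takes a genuinely different route from the paper. You present $\mathrm{Div}_\CurP^0(F)$ on the free basis $\{P_i-P_\infty,\,Q_j-2P_\infty\}$ and read off the quotient by $\mathrm{Princ}_\CurP(F)$ via Smith normal form, using the explicit generator description of $\CurO_\CurP^*$ quoted in Subsection \ref{eg:HyperellipticFuncFieldLattices}; this identifies the quotient group itself as $(\Z/2\Z)^{r-1}$, respectively $(\Z/2\Z)^{2g+1}/\langle(1,\dots,1)\rangle\cong(\Z/2\Z)^{2g}$, not merely its order, and makes the halving caused by adjoining $u$ transparent. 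The paper instead only uses that the conorms $2P_i-2P_\infty$ and $Q_j-2P_\infty$ are principal to produce the $2^{r-1}$ semi-reduced representatives of Lemma \ref{lemma:form_of_reduced_div_hyp_ff}, and then proves distinctness of their classes by the uniqueness of reduced divisors (Theorem \ref{thm:UniquenessReducedDivHyper}), observing in the non-split case that reducing a non-reduced representative via $(u)^F$ forces support on a non-rational ramified place and hence cannot collide with any divisor from the list; in the split case the count $\sum_{i=0}^{g}\binom{2g+1}{i}=2^{2g}$ matches your answer. The trade-off is that your argument leans entirely on the cited structure of $\CurO_\CurP^*$ (equivalently, that the listed principal divisors exhaust $\mathrm{Princ}_\CurP(F)$) --- a nontrivial input you correctly flag as the crux --- whereas the paper's reduced-divisor argument needs only the conorm relations plus the uniqueness theorem, and your fallback sketch via reduced divisors is essentially the paper's proof. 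Both are sound.
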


\begin{proof}
    
    By Lemma \ref{lemma:form_of_reduced_div_hyp_ff} we know that each 
    class in $\mathrm{Div}_\mathcal{P}^0(F)
    / \mathrm{Princ}_\mathcal{P}(F)$ contains a semi reduced divisor 
    of the form 
    \begin{align}\label{eq:semi-reduced_div_form}
        D = 
     \sum_{i=1}^{r-1} a_i P_i - (\sum_{i=1}^{r-1} a_i) P_\infty
    \end{align}
     for $a_i \in \{0,1\}$.

    Since there only two options for each $a_i$, this means there are 
    $2^{r-1}$ different divisors of this form.
    First, let us assume that $f(x)$ does not split into linear 
    factors. Let $D$ be one of these $2^{r-1}$ divisors. 
    If $D$ is a reduced divisor then by Theorem \ref{thm:UniquenessReducedDivHyper} any two reduced divisors are in distinct 
    classes in $\mathrm{Div}^0(F)
    / \mathrm{Princ}(F)$. Since $\mathrm{Princ}_{\mathcal{P}}(F)$ 
     lies in $\mathrm{Princ}(F)$, they must also be in distinct classes in the 
    quotient group $\mathrm{Div}^0_{\mathcal{P}}(F)
    / \mathrm{Princ}_{\mathcal{P}}(F)$.    This shows that two reduced divisors are in distinct classes. 
    
    Next we show the same for one reduced and one semi-reduced divisor.
    Now let $D$ be only semi-reduced, i.e. 
    $\frac{1}{2}\mathrm{deg}(D) > g$. Because this divisor is already 
    semi-reduced, finding the equivalent reduced divisor is straightforward.
    Let $(u)^F$ be the principal divisor of $u$ in $F$
    \begin{align*}
        (u)^F = P_1 + \ldots +P_{r-1}+R_1+\ldots+R_t-(2g+1)P_\infty,
    \end{align*}
    where $R_1, \ldots R_t$ are the non-rational ramified places of $F$ with $t\geq1$.
    Then the divisor $\Tilde{D}= -(D-(u)^F)$ is reduced because it has
    \begin{align*}
        \frac{1}{2}(\mathrm{deg}(-D+(y))) = 
    \frac{1}{2}(2g+1 -\mathrm{deg}(D)) \leq g
    \end{align*} 
    and only has support on ramified places with valuation either 
    $0$ or $1$. Because $(u)^F$ is principal, $\Tilde{D}$ is in 
    the same equivalence class as $D$ in $\mathrm{Div}^0(F)
    / \mathrm{Princ}(F)$.
    This reduced divisor is not equal to any of the 
    reduced divisors given in Equation \ref{eq:semi-reduced_div_form}, which means 
    that $D$ is also in its distinct class. 
    It follows that all $2^{r-1}$ divisors given in Equation \ref{eq:semi-reduced_div_form} are in distinct classes.
    
    Finally, assume that $f(x)$ splits into linear factors. As before, the reduced divisors are all in distinct classes. However, in this case, the principal divisor $(u)^F$ is an element of $\mathrm{Princ}_\mathcal{P}(F)$ since all ramified places are rational and therefore in $\CurP$. This means, by the same argument as before, that for any semi-reduced divisor given in Equation \ref{eq:semi-reduced_div_form} there is an equivalent reduced divisor given in Equation \ref{eq:semi-reduced_div_form}.
    Therefore, the number of distinct classes is equal to the number of reduced divisors with support on $\mathcal{P}$, of which there are 
    \begin{align*}
        \sum_{i=0}^{g} {2g+1 \choose i} = 2^{2g}.
    \end{align*}  
\end{proof}    

\begin{remark}
    In the case where the polynomial $f(x)$ does not split into linear 
    factors, we prove in Lemma \ref{lemma:hyp_latt_similar_to_A} that the function field lattice is equal to 
    $\mathcal{L}_\mathcal{P} = 2\cdot \CurA_n$. This means we can easily 
    confirm our result in Theorem \ref{thm:Hyp_ell_ff_lat_det_ram}, since the determinant is then 
    \begin{align*}
      \mathrm{det}(\mathcal{L}_\mathcal{P}) &=\mathrm{det}( 2\cdot \CurA_n) \\ 
      &= 2^n \mathrm{det}(\CurA_n)= 2^n \sqrt{n+1}\\
      &= 
     2^{r-1}2^s\sqrt{n+1}.  
    \end{align*}
\end{remark}

Until now, we have restricted ourselves to the case where $\mathcal{P}$ 
only contains the ramified places of degree one and the inert places 
of degree 2. Here, we present one result on the case where 
$\mathcal{P}$ is chosen to be the set of all rational places.

\begin{theorem}\label{thm:Hyp_ell_ff_lat_det_rational_places}
    Let $F/K$ be a hyperelliptic function field over $\mathbb{F}_q$ 
    with genus $g$ and let $h$ denote the class number. 
    Take $\mathcal{P}$ to be the set of all rational 
    places of $F$, i.e. $\mathcal{P} = \mathbb{P}_F^1$. If $q$ and $g$
    satisfy 
    $$ \sqrt{q}+ \frac{1}{\sqrt{q}} > 2\cdot (2g-1),$$
    then the determinant of the function field lattice 
    $\mathcal{L}_\mathcal{P}$ is equal to 
    $$\mathrm{det}(\mathcal{L}_\mathcal{P}) = h\sqrt{n+1}.$$
\end{theorem}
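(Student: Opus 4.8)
The plan is to reduce the computation of $\det(\CurL_\CurP)$ to an identity between two class numbers and then settle that identity with a character/$L$-function argument tuned to the numerical hypothesis. Since $\CurP = \mathbb{P}_F^1$ consists entirely of places of degree one, every $d_i = 1$ and $d = \gcd(d_0,\dots,d_n) = 1$, so Proposition \ref{prop:ff_lat_det} gives $\det(\CurL_\CurP) = \sqrt{n+1}\cdot h_0$. It therefore suffices to prove $h_0 = h$. Recall that $h_0 = \#\big(\mathrm{Div}^0_\CurP(F)/\mathrm{Princ}_\CurP(F)\big)$. The natural homomorphism $\mathrm{Div}^0_\CurP(F)/\mathrm{Princ}_\CurP(F) \to \mathrm{Cl}^0(F)$ is injective: if two $\CurP$-supported degree-zero divisors differ by a principal divisor $(f)$, then $(f)$ is itself supported on $\CurP$ and hence already trivial in the left-hand quotient. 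Consequently $h_0 = |H|$, where $H \leq \mathrm{Cl}^0(F)$ is the subgroup generated by the classes $[P-P']$ with $P,P'$ rational. Thus $h_0 = h$ is equivalent to the statement that \emph{the rational places generate the degree-zero divisor class group} $\mathrm{Cl}^0(F)$.

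To prove this generation statement I will argue by contradiction. Suppose $H \neq \mathrm{Cl}^0(F)$. Then the nontrivial finite abelian quotient $\mathrm{Cl}^0(F)/H$ admits a nontrivial character, which pulls back to a nontrivial character $\chi_0 \colon \mathrm{Cl}^0(F) \to \Cx^\times$ that is trivial on $H$. Fix a rational place $P_0 \in \CurP$ and extend $\chi_0$ to an everywhere-unramified character $\chi$ of the full divisor class group by setting $\chi(D) = \chi_0\big([D - \deg(D)P_0]\big)$; this is well defined modulo principal divisors, has finite order, and restricts to $\chi_0$ on $\mathrm{Cl}^0(F)$. The key feature is that for every rational place $P$ we have $\chi([P]) = \chi_0([P-P_0]) = 1$, since $[P-P_0] \in H$.

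Now I bring in the theory of $L$-functions (see \cite{stichtenoth}). Because $\chi$ is a nontrivial unramified character, its $L$-function $L(\chi,t) = \prod_{P}(1-\chi([P])\,t^{\deg P})^{-1}$ is a polynomial of degree $2g-2$, and by the Riemann Hypothesis for function fields its inverse roots $\gamma_1,\dots,\gamma_{2g-2}$ all satisfy $|\gamma_i| = \sqrt{q}$. Comparing the coefficient of $t$ in the two expansions of $\log L(\chi,t)$ yields $\sum_{\deg P = 1}\chi([P]) = -\sum_{i=1}^{2g-2}\gamma_i$. The left-hand side equals the number $N_1$ of rational places, since $\chi$ is trivial on all rational classes, while the right-hand side is bounded in absolute value by $(2g-2)\sqrt{q}$. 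Hence $N_1 \leq (2g-2)\sqrt{q}$.

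Finally I derive the contradiction from the hypothesis. The Hasse--Weil bound gives $N_1 \geq q+1-2g\sqrt{q}$, and multiplying the assumption $\sqrt{q}+\tfrac{1}{\sqrt{q}} > 2(2g-1)$ by $\sqrt{q}$ rewrites it as $q+1 > (4g-2)\sqrt{q}$, i.e. $q+1-2g\sqrt{q} > (2g-2)\sqrt{q}$. Combining these gives $N_1 > (2g-2)\sqrt{q}$, contradicting the previous inequality. Therefore $H = \mathrm{Cl}^0(F)$, so $h_0 = h$ and $\det(\CurL_\CurP) = h\sqrt{n+1}$. The main obstacle is the $L$-function input: one needs that a nontrivial unramified character has $L$-function of degree \emph{exactly} $2g-2$ (not $2g$) with all inverse roots of absolute value $\sqrt{q}$, and it is precisely this value $2g-2$ that matches the bound $q+1-2g\sqrt{q} > (2g-2)\sqrt{q}$ and makes the stated numerical condition sharp. (Note that hyperellipticity plays no role here; the argument works for any genus-$g$ function field over $\F_q$.)
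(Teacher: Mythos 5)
Your proposal is correct, and it reaches the same pivotal statement as the paper --- that under the stated inequality the classes of degree-zero divisors supported on the rational places exhaust $\mathrm{Cl}^0(F)$, hence $h_0=h$ --- but by a genuinely different route. The paper treats this generation statement as a black box: it invokes a cited result of Hess (Theorem \ref{thm:ClassGroup_generators}), specializes it to $d=1$ with $D_0=P_\infty$, and checks via Hasse--Weil that the hypothesis $N_1>(2g-2)\sqrt{q}$ is met. You instead re-derive exactly that criterion from scratch: assuming the rational classes generate a proper subgroup $H$, you produce a nontrivial unramified character trivial on $H$, use that its $L$-function is a polynomial of degree $2g-2$ whose inverse roots have modulus $\sqrt{q}$, and read off $N_1\le(2g-2)\sqrt{q}$ from the linear coefficient --- contradicting $N_1\ge q+1-2g\sqrt{q}>(2g-2)\sqrt{q}$. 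Your version is self-contained, makes transparent why the constant $2g-2$ (and hence the exact inequality $\sqrt{q}+1/\sqrt{q}>2(2g-1)$) appears, and correctly observes that hyperellipticity is irrelevant. It also sidesteps a small blemish in the paper's argument, which restricts without real need to the case where the place at infinity is ramified in order to have a degree-one divisor $D_0$ at hand; your argument only needs the existence of some rational place, which the hypothesis already guarantees. Your preliminary reduction (injectivity of $\mathrm{Div}^0_\CurP(F)/\mathrm{Princ}_\CurP(F)\to\mathrm{Cl}^0(F)$, with image the subgroup generated by the $[P-P']$) is also a cleaner justification of the step the paper passes over quickly when it concludes $h_0=h$ from surjectivity. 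The only point to be careful about, which you flag yourself, is the degree of the $L$-polynomial: one must know the character is nontrivial on $\mathrm{Cl}^0(F)$ (so the degree is $2g-2$ rather than the zeta-function count of $2g$), and your construction does ensure this.
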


To prove this theorem we first need the following result \cite[Theorem 34]{hess2004computing}.
Let $N_d(F/K)$ denote the number of places of degree one in the constant 
field extension of $F/K$ of degree $d$.

\begin{theorem}\label{thm:ClassGroup_generators}
    Let $D_0$ be a divisor of degree one in $F/K$ and $d\in \mathbb{N}$ 
    such that $N_d(F/K) > (g-1)2q^{\frac{d}{2}}$. Denote by $P_1,...,P_r$ all 
    places of degree dividing $d$ and set 
    $D_i = P_i - \mathrm{deg}(P_i)D_0$. Then $\mathrm{deg}(D_i) = 0$ 
    and the classes $[D_i]$ generate $\mathrm{Cl}^0(F/K)$.
    
\end{theorem}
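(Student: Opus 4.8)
The first assertion is immediate: since $\deg(D_0)=1$, we have $\deg(D_i)=\deg(P_i)-\deg(P_i)\deg(D_0)=0$. The substance is the generation claim, and the plan is to argue by contradiction using the characters of the Jacobian together with the Riemann hypothesis for function fields (Weil). Write $G:=\mathrm{Cl}^0(F/K)$, and let $H\le G$ be the subgroup generated by the classes $[D_i]=[P_i-\deg(P_i)D_0]$, where $P_1,\dots,P_r$ runs over all places with $\deg(P_i)\mid d$. Suppose, for contradiction, that $H\neq G$. Then $G/H$ is a nontrivial finite abelian group, so its dual is nontrivial: there is a finite-order homomorphism $\chi\colon G\to\Cx^\times$ with $\chi|_H=1$ but $\chi\neq 1$.

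Next I would push $\chi$ up to the full divisor class group. Because $D_0$ has degree one, the degree map $\deg\colon\mathrm{Cl}(F/K)\to\Z$ is split surjective and $\mathrm{Cl}(F/K)\cong G\oplus\Z[D_0]$; hence $\chi$ extends to a finite-order character $\widetilde\chi$ of $\mathrm{Cl}(F/K)$ by setting $\widetilde\chi([D_0])=1$. For every place $P$ with $\deg(P)\mid d$ one then has $\widetilde\chi([P])=\widetilde\chi([P]-\deg(P)[D_0])\,\widetilde\chi([D_0])^{\deg(P)}=\chi([P-\deg(P)D_0])=1$, since that class lies in $H$. Thus $\widetilde\chi$ is trivial on the class of every place of degree dividing $d$.

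The key tool is the $L$-function $L(\widetilde\chi,T)=\prod_{P\in\mathbb{P}_F}\bigl(1-\widetilde\chi([P])\,T^{\deg(P)}\bigr)^{-1}$, about which I would establish two facts. First, it is a polynomial of degree at most $2g-2$: writing $L(\widetilde\chi,T)=\sum_{n\ge 0}a_nT^n$ with $a_n=\sum_{D\ge 0,\ \deg D=n}\widetilde\chi([D])$, Riemann--Roch gives, for $n>2g-2$, that every degree-$n$ class contains the same number $(q^{\,n+1-g}-1)/(q-1)$ of effective divisors, so $a_n$ is that common count times $\sum_{[\mathcal D]\in\mathrm{Cl}^n}\widetilde\chi([\mathcal D])$; the latter is $\widetilde\chi(n[D_0])\sum_{c\in G}\chi(c)=0$ by orthogonality, as $\chi=\widetilde\chi|_G$ is nontrivial. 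Second, by Weil's Riemann hypothesis for $F/K$ we may factor $L(\widetilde\chi,T)=\prod_k(1-\omega_kT)$ with $|\omega_k|=\sqrt q$ and at most $2g-2$ factors.

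Finally I would compare two expressions for the coefficient of $T^d$ in $T\frac{d}{dT}\log L(\widetilde\chi,T)$. From the Euler product this coefficient is $c_d:=\sum_{(P,m):\,m\deg(P)=d}\deg(P)\,\widetilde\chi([P])^m$; since each such $P$ satisfies $\deg(P)\mid d$ and hence $\widetilde\chi([P])=1$, it collapses to $c_d=\sum_{\deg(P)\mid d}\deg(P)=N_d(F/K)$. From the factorization it equals $c_d=-\sum_k\omega_k^d$, whence $|c_d|\le (2g-2)q^{d/2}=2(g-1)q^{d/2}$. Combining, $N_d(F/K)\le 2(g-1)q^{d/2}$, contradicting the hypothesis $N_d(F/K)>2(g-1)q^{d/2}$. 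Therefore $H=G$, i.e. the classes $[D_i]$ generate $\mathrm{Cl}^0(F/K)$. The main obstacle is precisely the degree bound in the third step: showing that the twisted $L$-function is a polynomial of degree $\le 2g-2$ (as opposed to the $2g$ appearing for the zeta function) is what produces the sharp constant $2(g-1)$, and it is this Riemann--Roch-plus-orthogonality refinement that makes the stated hypothesis exactly sufficient.
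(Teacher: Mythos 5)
The paper contains no proof of this statement to compare against: it is imported verbatim as \cite[Theorem 34]{hess2004computing}. Your argument is correct, and it is essentially the standard character-theoretic proof underlying Hess's theorem: the splitting $\mathrm{Cl}(F/K)\cong \mathrm{Cl}^0(F/K)\oplus \mathbb{Z}[D_0]$ afforded by the degree-one divisor $D_0$, the Riemann--Roch-plus-orthogonality computation showing $a_n=0$ for $n>2g-2$ (so that $L(\widetilde\chi,T)$ is a polynomial of degree at most $2g-2$, which is precisely what produces the constant $2(g-1)$ rather than $2g$), and the comparison of the two expressions for the coefficient of $T^d$ in $T\frac{d}{dT}\log L(\widetilde\chi,T)$ are all sound, and the contradiction with $N_d(F/K)>(g-1)2q^{d/2}$ is exactly as you state.

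Two steps deserve to be made explicit. First, your identification $c_d=N_d(F/K)$ silently uses the decomposition law for constant field extensions: with the paper's definition of $N_d(F/K)$ as the number of degree-one places of the constant field extension of degree $d$, one needs that a place $P$ of $F$ with $\deg(P)=e$ lies under degree-one places of $F\mathbb{F}_{q^d}$ exactly when $e\mid d$, and then under exactly $e$ of them, giving $N_d(F/K)=\sum_{\deg(P)\mid d}\deg(P)$, which is your collapsed sum; cite \cite{stichtenoth} for this. Second, the assertion $|\omega_k|=\sqrt{q}$ is not literally the Riemann hypothesis for the zeta function of $F$ itself but Weil's theorem for (everywhere unramified, finite-order) Hecke characters; it can be reduced to the zeta case by class field theory, since $L(\widetilde\chi,T)$ divides the numerator of the zeta function of the unramified abelian extension attached to $\ker\widetilde\chi$ --- here one uses that $\widetilde\chi$ has finite order and is nontrivial on the degree-zero classes, so it does not arise from a constant field extension. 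Finally, for $g\le 1$ your degree bound forces $L(\widetilde\chi,T)=1$, whence $c_d=N_d=0$, and the contradiction is immediate, so no edge case is lost. With these citations supplied, your proof is complete and matches the argument behind the quoted source.
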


Again, the goal here is to understand the group $\mathrm{Div}_{\mathcal{P}}^0(F) / \mathrm{Princ}_{\mathcal{P}}^0(F)$ and the order of its elements in order to determine $h_0$, when $\mathcal{P} = \mathbb{P}_F^1$. The theorem above helps us understand the conditions under which $\mathrm{Cl}^0(F/K)$ is generated by places of degree one. This, in turn, tells us when $h_0$ is equal to the class number $h$.

\begin{proof}[Proof of Theorem \ref{thm:Hyp_ell_ff_lat_det_rational_places}]
    We only consider the case where the place at infinity is ramified 
    in $F/K$ so we can choose $P_\infty$ as our divisor $D_0$. Next 
    recall by Hasse-Weil Theorem we have
    $\mid N_d(F/K) -(q^d-1) \mid \leq 2gq^\frac{d}{2}$. 
    If we take $d=1$ and use this bound it becomes clear when we fulfill 
    the necessary condition 
    $$ N_d(F/K) \geq (q+1)-2g\sqrt{q} > (g-1)2\sqrt{q} \text{ , that is,}$$
    $$ q+1 > (2g-1)2\sqrt{q} \text{ , or equivalently, }$$
    $$\sqrt{q}+\frac{1}{\sqrt{q}}> 2(2g-1).$$
    This means that when $g$ and $q$ satisfy the this inequality, then 
    by Theorem \ref{thm:ClassGroup_generators} the divisor classes $[P-P_\infty]$ for $P \in \mathbb{P}_F^1$ 
    generate $\mathrm{Cl}^0(F/K)$.
    This implies that every divisor class in $\mathrm{Cl}^0(F/K)$ has a 
    representative with support on $\mathcal{P} = \mathbb{P}_F^1$. 
    And hence, 
    $\mathrm{Div}_{\mathcal{P}}^0(F) / \mathrm{Princ}_{\mathcal{P}}^0(F)
    = \mathrm{Cl}^0(F/K)$. Therefore $h_0 = h$ and the determinant 
    of the function field lattice is $\mathrm{det}
    (\mathcal{L}_\mathcal{P}) = h\sqrt{n+1}$.
    
\end{proof}

\section{Automorphisms}\label{sec:Automorphisms}

In this section, we aim to understand the relationship between the automorphisms of the function field and the associated lattice. To achieve this, we discuss the explicit automorphism groups of specific function fields in the following subsections, addressing each case separately. Our focus is on the rational, elliptic, and hyperelliptic function fields.

\subsection{General Algebraic Function Field Lattices}\label{subsec:Aut of Lat from FF in general}

Let us first explain how we map automorphisms of the function field $F/K$ to automorphisms of the function field lattice $\mathcal{L}_\mathcal{P}$. For the induced map to be well-defined, it must be invariant on $\mathcal{O}_\mathcal{P}^*$. Otherwise, we would not be able to define the action of the map on a given lattice vector. This implies that any automorphism $\sigma \in \mathrm{Aut}_K(F)$ must be invariant on the set of places $\mathcal{P}$. In most cases considered in this chapter, $\mathcal{P} = \mathbb{P}_F^1$, which ensures that any automorphism will be invariant on $\mathcal{P}$.

For this reason, we first recall the following lemmas \cite{stichtenoth}.

\begin{lemma}\label{lemma:aut_bijection_places}
    Let $F/K$ be a function field and let $\sigma \in 
    \mathrm{Aut}_K(F)$. Then $\sigma$ induces a bijection on 
    the set of places $\mathbb{P}_F$.
\end{lemma}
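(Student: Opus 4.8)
The plan is to show that an automorphism $\sigma \in \mathrm{Aut}_K(F)$ maps places to places bijectively, and the cleanest route is to observe that $\sigma$ induces an automorphism of the valuation-theoretic structure of $F/K$. First I would recall that a place $P \in \mathbb{P}_F$ is equivalent to the data of a discrete valuation ring $\mathcal{O}_P \subset F$ with maximal ideal $\mathfrak{m}_P$, where $K \subseteq \mathcal{O}_P$ (so that $P$ is a place of the function field $F/K$). Given $\sigma$, an automorphism fixing $K$ pointwise, the idea is to define the image place $\sigma(P)$ via the pushed-forward valuation ring $\sigma(\mathcal{O}_P)$, or equivalently via the composed valuation $v_P \circ \sigma^{-1}$.

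The key steps I would carry out in order are the following. First, verify that $\sigma(\mathcal{O}_P)$ is again a valuation ring of $F/K$: since $\sigma$ is a field automorphism fixing $K$, it sends subrings to subrings, preserves inclusions, and maps $\mathcal{O}_P$ (a maximal proper subring with the valuation-ring property) to another ring with the same property; moreover $\sigma(K) = K$ guarantees $K \subseteq \sigma(\mathcal{O}_P)$. Second, check that the maximal ideal goes to the maximal ideal, i.e. $\sigma(\mathfrak{m}_P)$ is the maximal ideal of $\sigma(\mathcal{O}_P)$, so that $\sigma(P)$ is a well-defined place. Equivalently, one defines $v_{\sigma(P)} := v_P \circ \sigma^{-1}$ and checks directly that this is a discrete valuation of $F/K$ with the required properties. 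Third, establish bijectivity: the assignment $P \mapsto \sigma(P)$ has inverse $P \mapsto \sigma^{-1}(P)$, because $\sigma^{-1}$ is also an automorphism fixing $K$ and the two constructions are mutually inverse on valuation rings, giving a two-sided inverse on $\mathbb{P}_F$.

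I do not expect any serious obstacle here, since the statement is essentially a formal consequence of the correspondence between places and valuation rings together with the fact that ring-theoretic properties are preserved under isomorphism. The only point requiring a little care is confirming that the valuation-ring characterization of places is exactly preserved, namely that $\sigma(\mathcal{O}_P)$ is a proper subring, is not all of $F$, contains $K$, and satisfies the valuation-ring dichotomy $z \in \sigma(\mathcal{O}_P)$ or $z^{-1} \in \sigma(\mathcal{O}_P)$ for every nonzero $z \in F$. Each of these transfers immediately through $\sigma$ because $\sigma$ is a bijective ring homomorphism. Since the result is standard and cited from \cite{stichtenoth}, I would keep the argument brief, stating that $\sigma$ permutes valuation rings of $F/K$ and hence permutes $\mathbb{P}_F$, with $\sigma^{-1}$ inducing the inverse permutation.
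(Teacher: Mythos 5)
Your argument is correct: the paper itself gives no proof of this lemma, merely citing \cite{stichtenoth}, and your valuation-ring argument (defining $\sigma(P)$ via $\sigma(\mathcal{O}_P)$, or equivalently $v_{\sigma(P)} = v_P \circ \sigma^{-1}$, and obtaining bijectivity from the inverse automorphism $\sigma^{-1}$) is precisely the standard proof found there. No gaps.
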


 \begin{lemma}\label{lemma:aut_preserve_degree}
    Let $F/K$ be a function field and let $\sigma \in 
    \mathrm{Aut}_K(F)$. Then $\sigma$ preserves the degree 
    of a place. 
 \end{lemma}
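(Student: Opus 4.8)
The plan is to use the characterization of the degree of a place $P$ as the dimension $\deg(P) = [F_P : K]$ of its residue field $F_P = \mathcal{O}_P / P$ over the constant field $K$, and then to show that $\sigma$ induces a $K$-isomorphism between the residue field of $P$ and that of $\sigma(P)$. Once such a $K$-isomorphism is produced, the two residue fields must have the same dimension over $K$, and the claim follows.

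First I would make precise how $\sigma$ acts on a place. By Lemma \ref{lemma:aut_bijection_places}, $\sigma$ permutes $\mathbb{P}_F$; concretely, writing $\mathcal{O}_P$ for the valuation ring of $P$ with maximal ideal $P$, the place $\sigma(P)$ is the one whose valuation ring is $\sigma(\mathcal{O}_P)$ and whose maximal ideal is $\sigma(P)$. Equivalently, $\sigma$ sends a uniformizer of $P$ to a uniformizer of $\sigma(P)$, so that $v_{\sigma(P)}(\sigma(z)) = v_P(z)$ for all $z \in F$. Next I would build the comparison of residue fields: since $\sigma$ is a field automorphism carrying $\mathcal{O}_P$ onto $\mathcal{O}_{\sigma(P)} = \sigma(\mathcal{O}_P)$ and $P$ onto $\sigma(P)$, it descends to a well-defined field isomorphism $\bar{\sigma} \colon \mathcal{O}_P / P \to \mathcal{O}_{\sigma(P)} / \sigma(P)$, that is $\bar{\sigma} \colon F_P \to F_{\sigma(P)}$.

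Finally I would check $K$-linearity and conclude. Because $\sigma$ fixes $K$ pointwise, and $K$ sits inside each residue field as the image of the constants, $\bar{\sigma}$ restricts to the identity on $K$; hence $\bar{\sigma}$ is a $K$-isomorphism of fields. A $K$-isomorphism preserves dimension over $K$, so $[F_P : K] = [F_{\sigma(P)} : K]$, i.e. $\deg(P) = \deg(\sigma(P))$, which is exactly the assertion. There is no genuine obstacle here; the only point demanding a line of care is verifying that the copies of $K$ inside the two residue fields are matched by $\bar{\sigma}$, and this is immediate from $\sigma|_K = \mathrm{id}_K$.
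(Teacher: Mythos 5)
Your proof is correct. The paper does not actually prove this lemma---it is recalled from \cite{stichtenoth} without argument---and your reasoning (that $\sigma$ carries $\mathcal{O}_P$ onto $\mathcal{O}_{\sigma(P)}$ and $P$ onto $\sigma(P)$, hence descends to a $K$-isomorphism of residue class fields $F_P \to F_{\sigma(P)}$, so that $[F_P:K] = [F_{\sigma(P)}:K]$) is precisely the standard argument behind that citation, with the one delicate point, namely that $\bar{\sigma}$ matches the embedded copies of $K$ because $\sigma|_K = \mathrm{id}_K$, correctly identified and handled.
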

Now recall the construction of a function field lattice from Definition \ref{defn:FuncFieldLattice}. The constructed lattice is isomorphic to the group of principal divisors of $F$ with support on $\mathcal{P}$. At the same time, we are not concerned with automorphisms that act trivially on the lattice. More specifically, these are maps that are the identity on $\mathcal{P}$. To address this, we introduce the following notation: let $F/K$ be a function field, and let $\mathcal{P}$ be a finite subset of $\mathbb{P}_F$. Define $\mathrm{Inv}(\mathcal{P})$ as the subgroup of all automorphisms of $F/K$ that are the identity on $\mathcal{P}$.

When $\mathcal{P}$ is the set of all places of a fixed degree, $\mathrm{Inv}(\mathcal{P})$ is a normal subgroup of $\mathrm{Aut}_K(F)$. We briefly explain why. Let $\mathcal{P}$ be the set of all places of degree $d \in \mathbb{N}$. Let $\sigma \in \mathrm{Inv}(\mathcal{P})$ and $\Tilde{\sigma} \in \mathrm{Aut}_K(F)$. As stated in Lemmas \ref{lemma:aut_bijection_places} and \ref{lemma:aut_preserve_degree} an automorphism acts as a bijection on the places of a given degree. This means that the map $\Tilde{ \sigma} \sigma \Tilde{ \sigma}^{-1} $ will permute the places of degree $d$ and then permute them back, so it remains an element of $\mathrm{Inv}(\mathcal{P})$.

Under the assumption that $\CurP$ is the set of places of a fixed degree, we define the following map: 

\begin{align}
    \Psi_\CurP : \mathrm{Aut}_K(F)/\mathrm{Inv}(\mathcal{P})
&\rightarrow 
    \mathrm{Aut}(\mathcal{L}_\mathcal{P}), \nonumber\\
    [\sigma] &\mapsto \Big{(} \Phi_\CurP(z) \mapsto \Phi_\CurP(\sigma(z))\Big{)}. \nonumber
\end{align}

\begin{theorem}\cite{bottcher2015lattices}
\label{thm: Gen_Aut_FF_Lat}
    Let $F/K$ be a function field over a finite field $\mathbb{F}_q$. Let $\mathcal{P} = \mathbb{P}_F^1$. Then 
    the map $\Psi_\CurP$ is an injective group homomorphism.
\end{theorem}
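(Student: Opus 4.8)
The plan is to make precise the observation recorded in the introduction that an automorphism of $F/K$ acts on $\CurL_\CurP$ merely by permuting coordinates, and then to read off every required property from this description. First I would justify that the target map is even defined: since $\CurP = \mathbb{P}_F^1$, Lemmas \ref{lemma:aut_bijection_places} and \ref{lemma:aut_preserve_degree} show that any $\sigma \in \Aut_K(F)$ restricts to a bijection of $\CurP$, so $\sigma(z) \in \CurO_\CurP^*$ whenever $z \in \CurO_\CurP^*$ and $\Phi_\CurP(\sigma(z))$ makes sense. Using the transformation rule $v_P(\sigma(z)) = v_{\sigma^{-1}(P)}(z)$ for valuations under an automorphism, together with $\deg(P)=1$ for every $P \in \CurP$, I would show that the coordinate of $\Phi_\CurP(\sigma(z))$ indexed by $P$ equals the coordinate of $\Phi_\CurP(z)$ indexed by $\sigma^{-1}(P)$. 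Thus $\Psi_\CurP([\sigma])$ acts on $\CurL_\CurP$ as the permutation matrix $M_\sigma$ of the induced permutation of $\CurP$. This identification makes the target map manifestly well defined, since it depends only on the lattice vector $\Phi_\CurP(z)$ and not on the choice of $z$ representing it.

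Next I would settle the structural claims, all of which become routine once $\Psi_\CurP([\sigma])$ is identified with $M_\sigma$. The matrix $M_\sigma$ is orthogonal and carries $\CurL_\CurP$ bijectively onto itself, with inverse $M_{\sigma^{-1}}$, so $\Psi_\CurP([\sigma]) \in \Aut(\CurL_\CurP)$. If $\sigma \in \mathrm{Inv}(\CurP)$ then $\sigma$ fixes $\CurP$ pointwise, hence $M_\sigma = \Id$; combined with the composition law $M_{\sigma\tau} = M_\sigma M_\tau$ this shows simultaneously that $\Psi_\CurP$ is independent of the coset representative and that it is a homomorphism, because on any vector $\Phi_\CurP(z)$ one has $\Psi_\CurP([\sigma\tau])(\Phi_\CurP(z)) = \Phi_\CurP(\sigma\tau(z)) = \Psi_\CurP([\sigma])\bigl(\Psi_\CurP([\tau])(\Phi_\CurP(z))\bigr)$.

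The hard part will be injectivity, so that is where I would concentrate. Suppose $\Psi_\CurP([\sigma]) = \id$, i.e. $M_\sigma$ fixes every vector of $\CurL_\CurP$. The crucial input is that $\CurL_\CurP$ has rank $n$ and lies in the hyperplane $H = \{v \in \mathbb{R}^{n+1} : \sum_i v_i = 0\}$; being a finite-index sublattice of $\CurA_n$ (Proposition \ref{prop:ff_lat_det}), it spans $H$ over $\mathbb{R}$. Hence $M_\sigma$ restricts to the identity on all of $H$, and since $M_\sigma$ also fixes the all-ones vector and $\mathbb{R}^{n+1} = H \oplus \mathbb{R}\mathbf{1}$, it follows that $M_\sigma = \Id$ on $\mathbb{R}^{n+1}$. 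Then the induced permutation of $\CurP$ is trivial, so $\sigma$ fixes every place of $\CurP$, i.e. $\sigma \in \mathrm{Inv}(\CurP)$ and $[\sigma]$ is the identity class. I expect the only delicate point to be exactly this passage from ``$M_\sigma$ trivial on the lattice'' to ``$M_\sigma$ trivial on $H$'': it genuinely uses that $\CurL_\CurP$ is full-rank in $H$, rather than merely some sublattice of $\CurA_n$, to force the coordinate permutation itself to be trivial. Everything else is bookkeeping of the permutation action on coordinates.
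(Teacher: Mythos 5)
The paper does not actually prove this theorem; it is imported verbatim from \cite{bottcher2015lattices}, so there is no in-paper argument to compare yours against. Your proposal is nevertheless a correct and complete proof along the standard lines: identifying $\Psi_\CurP([\sigma])$ with the coordinate permutation matrix $M_\sigma$ (legitimate because every place in $\CurP=\mathbb{P}_F^1$ has degree $1$, so the degree factors drop out of $v_P(\sigma(z))\cdot\deg(P)=v_{\sigma^{-1}(P)}(z)\cdot\deg(\sigma^{-1}(P))$) simultaneously settles well-definedness modulo the kernel of $\Phi_\CurP$, the homomorphism property, independence of the coset representative, and membership in $\Aut(\CurL_\CurP)$. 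Your injectivity argument also isolates the right point: a permutation matrix fixing all of $\CurL_\CurP$ fixes the hyperplane $H$ it spans (full rank is what matters here) as well as the all-ones vector, hence is the identity, so $\sigma\in\mathrm{Inv}(\CurP)$. The only ingredient you use without comment is that $\mathrm{Inv}(\CurP)$ is normal so that the quotient is a group, but the paper establishes this immediately before defining $\Psi_\CurP$, so nothing is missing.
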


\begin{remark}
 For all lattices, including function field lattices, the map that multiplies all vectors by $-1$ is an automorphism. In the case of a function field lattice, multiplying all vectors by $-1$ corresponds to sending the underlying function $z \in F$ to its inverse $z^{-1}$, as this negates the valuations at any place, i.e. 
    $$v_P(z^{-1}) = -v_P(z).$$
    However, the map $z \mapsto z^{-1}$ is not an automorphism of the function field, as it is not a field homomorphism. This implies that the automorphism group of a function field and that of the associated lattice can never be isomorphic, unless both are trivial. In particular, this means that the images under our map $\Psi_\CurP$ will always have an index of at least 2. Additionally, the existence of the subgroup $\mathrm{Inv}(\CurP)$ implies that there are often automorphisms of the function field that do not define automorphisms on the lattice. As a result, the two automorphism groups are often not subgroups of each other. This is why, in the next section, we are motivated to observe them separately and then study their relationship.
\end{remark}

\subsection{Rational Function Field Lattices}\label{subsec:RationalFFLattices}

Recall that the automorphism group of the rational function field consists of maps in the form of M\"{o}bius transformations, that is, $x \mapsto \frac{ax+b}{cx+d}$, where $ad-bc\neq 0$. It is well-known that this group is explicitly the projective linear group
$$\mathrm{Aut}(K(x)/K)\simeq \mathrm{PGL}_2(K).$$
In this section, we claim that the automorphism group of the rational function field is a proper subgroup of the automorphism group of the lattice, denoted by $\mathrm{Aut}(\CurA_n)$. To support this, we first analyze $\Aut(\CurA_n)$ separately and then apply the results in Subsection \ref{sec:preliminaries}. The following lemma from \cite{martinet2013perfect} is the first step in our analysis.

\begin{lemma}\label{Lemma:AutOfRoot}
    For $n \geq 2$, $\mathrm{Aut}(\CurA_n)$ is isomorphic to $\{ \pm \Id \} \times S_{n+1}$. Here, $S_{n+1}$ denotes the permutation group given by permutations of the $n+1$ coordinates in $\mathbb{R}^{n+1}$ and
$\{ \pm \Id \}$ denotes the identity matrix and its additive inverse.
\end{lemma}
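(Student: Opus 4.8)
The plan is to prove the two containments separately, working with $\CurA_n = \{v \in \Z^{n+1} : \sum_i v_i = 0\}$ equipped with the inner product inherited from $\R^{n+1}$. For the inclusion $\{\pm\Id\}\times S_{n+1} \subseteq \Aut(\CurA_n)$, I would note that each coordinate permutation $\pi \in S_{n+1}$ and the map $-\Id$ are orthogonal transformations of $\R^{n+1}$ preserving both $\Z^{n+1}$ and the hyperplane $\sum_i v_i = 0$, hence restrict to automorphisms of $\CurA_n$. Since $-\Id$ is central and no nontrivial permutation matrix agrees with $-\Id$ on the hyperplane when $n \geq 2$, the two factors intersect trivially and generate a direct product of order $2\,(n+1)!$.

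For the reverse inclusion, the key reduction is that the minimal vectors of $\CurA_n$ are exactly the $n(n+1)$ roots $\pm(e_i - e_j)$ of squared length $2$: any nonzero $v \in \CurA_n$ has at least two nonzero entries, forcing $\|v\|^2 \geq 2$ with equality only for $\pm(e_i - e_j)$. An automorphism is an isometry preserving the lattice, so it permutes this set $M$ of minimal vectors, and because $M$ contains the basis $e_1 - e_2, \dots, e_n - e_{n+1}$, the restriction map $\Aut(\CurA_n) \to \mathrm{Sym}(M)$ is injective. It therefore suffices to show that every automorphism acts on $M$ as a signed coordinate permutation.

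To pin down the action on $M$, I would analyze the graph $\Gamma$ on $M$ joining $u,v$ whenever $\langle u,v\rangle = 1$, a relation preserved by every automorphism. A direct computation gives $\langle e_a - e_b, e_c - e_d\rangle = 1$ precisely when the two roots share a source ($a=c$) or a sink ($b=d$), and from this one checks that the maximal cliques of $\Gamma$ are exactly the $2(n+1)$ \emph{stars} $S_k^+ = \{e_k - e_j : j \neq k\}$ and $S_k^- = \{e_j - e_k : j \neq k\}$: once two roots in a clique share a source, the whole clique does, and symmetrically for sinks, while $n \geq 2$ keeps these stars distinct and of size $\geq 2$. An automorphism $\sigma$ thus permutes the stars, and since it commutes with the central map $-\Id$, which swaps $S_k^+ \leftrightarrow S_k^-$, it respects the pairing $\{S_k^+, S_k^-\}$ and hence induces a permutation $\pi \in S_{n+1}$ of the indices.

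The final and most delicate step is to show that $\tau = P_\pi^{-1}\sigma$, where $P_\pi \in S_{n+1} \subseteq \Aut(\CurA_n)$ is the coordinate permutation by $\pi$, equals $\pm\Id$. By construction $\tau$ fixes each pair $\{S_k^+, S_k^-\}$ setwise, so a priori it could fix some stars and flip others; the crux is a consistency argument ruling this out. Using $S_a^+ \cap S_b^+ = \emptyset$ while $S_a^+ \cap S_b^- = \{e_a - e_b\}$ for $a \neq b$, I would show that if $\tau$ fixed $S_a^+$ but sent $S_b^+$ to $S_b^-$, then $\tau(e_a - e_b)$ would have to lie in the empty set $S_a^+ \cap S_b^+$, a contradiction. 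Hence either $\tau(S_k^+) = S_k^+$ for all $k$, giving $\tau = \Id$ since each root is the unique element of some $S_a^+ \cap S_b^-$, or $\tau(S_k^+) = S_k^-$ for all $k$, giving $\tau = -\Id$. Either way $\sigma = P_\pi\tau \in \{\pm\Id\}\times S_{n+1}$. The main obstacle is exactly this sign-consistency argument; an alternative, less self-contained route would be to invoke the general description $\Aut(L) = W(L) \rtimes \Aut(\text{Dynkin diagram})$ for root lattices, observing that $W(\CurA_n) = S_{n+1}$ and that the order-two diagram automorphism of $A_n$ is realized by $-\Id$.
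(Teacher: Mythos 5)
Your proof is correct, but it takes a genuinely different route from the one in the paper (which is sketched there, following Martinet, and otherwise left to the citation). The paper writes $\CurA_n=\Z^{n+1}\cap H$ with $H$ the sum-zero hyperplane, extends an automorphism of $\CurA_n$ to an isometry of $\R^{n+1}$ stabilizing $H^{\perp}=\langle\mathbf{1}\rangle$, argues that this extension preserves the minimal vectors of $\Z^{n+1}$ and hence that $\Aut(\CurA_n)\subseteq\Aut(\Z^{n+1})=\{\pm1\}^{n+1}\rtimes S_{n+1}$, and finally observes that flipping some but not all signs destroys $H$, leaving only $\{\pm\Id\}\times S_{n+1}$. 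You instead work intrinsically with the root system: you classify the minimal vectors, form the graph on them with edges where the inner product equals $1$, identify its maximal cliques as the $2(n+1)$ stars $S_k^{\pm}$ (your clique analysis is correct: any clique of size at least three shares a common source or common sink, and for $n\geq 2$ the stars are distinct and contain every edge), and then run the sign-consistency argument via $S_a^{+}\cap S_b^{+}=\emptyset$ versus $S_a^{+}\cap S_b^{-}=\{e_a-e_b\}$ to force the residual map to be $\pm\Id$. The trade-off is that your argument is longer but fully self-contained, avoiding the paper's somewhat delicate intermediate claim that the extended isometry actually preserves $\Z^{n+1}$; the paper's route is shorter once one grants the structure of $\Aut(\Z^{n+1})$. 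One small point you assert without justification is that no nontrivial permutation matrix restricts to $-\Id$ on $H$ for $n\geq 2$; this is easily supplied (such a matrix would have trace $1-n<0$, impossible for a permutation matrix), and it is exactly where $n\geq 2$ enters, since for $n=1$ the transposition does act as $-\Id$ on $H$.
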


We can now discuss how the automorphism groups of the rational function field and the rational function field lattice are connected. The following theorem shows that they are indeed subgroups of each other. 
    
\begin{theorem}
    Let $K(x)/K$ be the rational function field. When 
    $\mathcal{P} = \mathbb{P}^1_F$ is the set of all rational places, then we have
    $\mathcal{L}_{\mathcal{P}} = \CurA_n$  where $|\mathcal{P}| = n+1$ 
    and  
    $$\mathrm{Aut}(K(x)) \subset \mathrm{Aut}(\CurA_n).$$
\end{theorem}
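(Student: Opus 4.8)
The plan is to combine the abstract embedding $\Psi_\CurP$ from Theorem \ref{thm: Gen_Aut_FF_Lat} with the explicit description of $\mathrm{Aut}(\CurA_n)$ from Lemma \ref{Lemma:AutOfRoot}. The identity $\mathcal{L}_\mathcal{P} = \CurA_n$ is exactly the content of Example \ref{eg:RationalFuncFieldLattices}, so it suffices to establish the (strict) inclusion of automorphism groups. Since $\CurP = \mathbb{P}^1_F$ is the full set of rational places, of which there are $n+1 = q+1$ when $K = \F_q$, Theorem \ref{thm: Gen_Aut_FF_Lat} applies directly and furnishes an injective group homomorphism
$$\Psi_\CurP : \mathrm{Aut}(K(x))/\mathrm{Inv}(\CurP) \longrightarrow \mathrm{Aut}(\mathcal{L}_\mathcal{P}) = \mathrm{Aut}(\CurA_n).$$

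Next I would show that $\mathrm{Inv}(\CurP)$ is trivial, so that the domain of $\Psi_\CurP$ is all of $\mathrm{Aut}(K(x))$. Recall that $\mathrm{Aut}(K(x)/K) \simeq \mathrm{PGL}_2(K)$ acts on the rational places through its usual action on the $q+1$ points of $\mathbb{P}^1(K)$. A nontrivial M\"obius transformation has at most two fixed points on $\mathbb{P}^1$, whereas any element of $\mathrm{Inv}(\CurP)$ must fix all $q+1 \geq 3$ rational places; hence $\mathrm{Inv}(\CurP) = \{\mathrm{id}\}$. Consequently $\Psi_\CurP$ embeds $\mathrm{Aut}(K(x))$ isomorphically onto a subgroup of $\mathrm{Aut}(\CurA_n)$.

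Finally, to obtain the \emph{proper} inclusion, I would locate the image of $\Psi_\CurP$ inside the splitting $\mathrm{Aut}(\CurA_n) \simeq \{\pm\Id\}\times S_{n+1}$ of Lemma \ref{Lemma:AutOfRoot}. Because every $\sigma \in \mathrm{Aut}(K(x))$ merely permutes the rational places by Lemmas \ref{lemma:aut_bijection_places} and \ref{lemma:aut_preserve_degree}, and each place in $\CurP$ has degree one, the relation $v_{P_i}(\sigma(z)) = v_{\sigma^{-1}(P_i)}(z)$ shows that the induced map $\Phi_\CurP(z) \mapsto \Phi_\CurP(\sigma(z))$ is simply a permutation of the $n+1$ coordinates. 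Thus the image of $\Psi_\CurP$ is contained in the subgroup $S_{n+1}$. Since $-\Id \in \mathrm{Aut}(\CurA_n)$ but $-\Id \notin S_{n+1}$ (equivalently, the lattice automorphism $z \mapsto z^{-1}$ is not induced by any field automorphism), the inclusion $\mathrm{Aut}(K(x)) \subset \mathrm{Aut}(\CurA_n)$ is strict.

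The main obstacle — indeed essentially the only point requiring care — is the pair of verifications that $\mathrm{Inv}(\CurP)$ is trivial and that the image of $\Psi_\CurP$ lands in the permutation part $S_{n+1}$. Both reduce to the elementary fact that $\mathrm{PGL}_2(\F_q)$ acts faithfully by permutations on $\mathbb{P}^1(\F_q)$ as soon as $q \geq 2$; everything else is a direct appeal to Theorem \ref{thm: Gen_Aut_FF_Lat}, Lemma \ref{Lemma:AutOfRoot}, and Example \ref{eg:RationalFuncFieldLattices}.
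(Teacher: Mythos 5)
Your proposal is correct and follows the same skeleton as the paper's proof: identify $\mathcal{L}_\mathcal{P}$ with $\CurA_n$ via Example \ref{eg:RationalFuncFieldLattices}, apply Theorem \ref{thm: Gen_Aut_FF_Lat}, and reduce everything to showing $\mathrm{Inv}(\mathcal{P})$ is trivial. You diverge in two places, and in both your version is the stronger one. First, the paper proves triviality of $\mathrm{Inv}(\mathcal{P})$ by a coefficient computation: fixing the divisor of $x$ forces $b=c=0$ in $\sigma(x)=\frac{ax+b}{cx+d}$, leaving $\sigma(x)=\frac{a}{d}x$, which it then declares to be the identity. That last step is incomplete as written, since $x\mapsto \lambda x$ with $\lambda\neq 1$ is a nontrivial automorphism; one still needs that it moves some third place (e.g.\ the zero of $x-1$). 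Your fixed-point argument --- a nontrivial element of $\mathrm{PGL}_2(K)$ fixes at most two points of $\mathbb{P}^1$, whereas an element of $\mathrm{Inv}(\mathcal{P})$ must fix all $q+1\geq 3$ rational places --- absorbs exactly this missing observation and is the cleaner route. Second, the paper's proof stops at the embedding $\mathrm{Aut}(K(x))/\mathrm{Inv}(\mathcal{P})\hookrightarrow \mathrm{Aut}(\CurA_n)$ and never actually establishes that the inclusion is \emph{proper}, although the text preceding the theorem claims properness. Your final step --- the image of $\Psi_\CurP$ consists of coordinate permutations, hence lies in $S_{n+1}$, while $-\Id\in\mathrm{Aut}(\CurA_n)$ is not a permutation matrix --- supplies the missing strictness, and it is consistent with the paper's own remark that $-\Id$ corresponds to $z\mapsto z^{-1}$, which is not a field automorphism. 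So your proof is not only valid but patches two small gaps in the published argument.
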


\begin{proof}
    By Theorem \ref{thm: Gen_Aut_FF_Lat} we know 
    that $\mathrm{Aut}(K(x))/\mathrm{Inv}(\mathcal{P})$ is a 
    subgroup of $\mathrm{Aut}(\CurA_n)$. Therefore, it remains to show that the subgroup 
    $\mathrm{Inv}(\mathcal{P})$ is trivial.\\
    Let $\sigma \in \mathrm{Inv}(\mathcal{P}),$
    where $\sigma$ is given by 
    $\sigma(x) = \frac{ax+b}{cx+d}$ with $a,b,c,d \in 
    K$ and $ad-bc\neq 0$. Since $\sigma$ is the identity on all rational places, it must, in particular, send $x$ to an element in $K(x)$ with the same divisor $P_x - P_\infty$. Because $\sigma(x) = \frac{ax+b}{cx+d}$, we can immediately conclude that $c$ and $b$ must be zero. This leaves us with $\sigma(x) = \frac{a}{d}x$. Since $\frac{a}{d}$ is a constant, this map is equivalent to $\sigma(x) = x$. Therefore, $\sigma$ is the identity map, and $\mathrm{Inv}(\mathcal{P})$ is trivial.

\end{proof}

\subsection{Lattices from Finite Abelian Groups}\label{subsec:LatticesFromAbelianGroups}

To understand the automorphisms of elliptic, hyperelliptic, and Hermitian function field lattices, we need the following construction from \cite{bottcher2015lattices}.

\begin{definition}
\label{defn:lat_from_abelian_groups_and_S}
    Let $G$ be an abelian group and $S= 
    \{ g_0 = 0, g_1, \ldots, g_{n} \}$ a subset of $G$. Then we define the lattice $\mathcal{L}_G(S)$ by
    $$ \mathcal{L}_G(S) \coloneqq \left\{ 
    (x_0, \ldots, x_{n}) \in \CurA_n \: \mid \: \sum_{i=0}^{n} x_i g_i = 0 \right\} .$$
    
\end{definition}

For these types of lattices, there are some useful results. We can induce automorphisms of the lattice from the automorphisms of the underlying group or set that defines its structure. This allows us to apply the automorphism groups of finite abelian groups, for which more is known. We will primarily use the following two lemmas.

\begin{lemma}\label{lemma:abelian_group_aut_1}
    Let $G$ be a finite abelian group and let $S$ be a subgroup of $G$. Then 
    $$\mathrm{Aut}(S) \cong \mathrm{Aut}(\mathcal{L}_G(S))
    \cap S_{n}.$$
\end{lemma}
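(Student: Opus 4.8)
The first thing I would pin down is the meaning of $S_n$ on the right-hand side. I read it as the stabilizer of the distinguished coordinate $g_0 = 0$, i.e. the group of permutations of the $n$ coordinates $g_1, \dots, g_n$, viewed inside $\Aut(\CurA_n)$. This reading is essentially forced: a translation $g_i \mapsto g_i + c$ by a fixed $c \in S$ induces a coordinate permutation preserving $\mathcal{L}_G(S)$, since $\sum_i x_i (g_i + c) = \sum_i x_i g_i + c \sum_i x_i = 0$ for every $x \in \mathcal{L}_G(S)$, yet it moves the coordinate $0$ unless $c = 0$. Hence dropping the stabilizer condition would make the right-hand group strictly larger than $\Aut(S)$ whenever $S$ is nontrivial, so the intended statement must fix the coordinate of $g_0$.

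The backbone of my argument would be the identification $\CurA_n / \mathcal{L}_G(S) \cong S$. Writing $e_0, \dots, e_n$ for the standard basis, the map $\phi \colon \CurA_n \to S$, $x \mapsto \sum_{i=0}^n x_i g_i$, is a surjective group homomorphism, because $\phi(e_i - e_0) = g_i$ and the $g_i$ generate $S$; and its kernel is exactly $\mathcal{L}_G(S)$ by Definition \ref{defn:lat_from_abelian_groups_and_S}. I would establish this at the outset.

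For the forward map, given $\psi \in \Aut(S)$ I note that $\psi$ permutes the set $S = \{g_0, \dots, g_n\}$ and fixes $g_0 = 0$, hence induces a permutation $\pi_\psi$ of $\{0, \dots, n\}$ with $\psi(g_i) = g_{\pi_\psi(i)}$ and $\pi_\psi(0) = 0$. That $\pi_\psi$ preserves the lattice is a one-line check: if $x \in \mathcal{L}_G(S)$ and $y$ is its image under $\pi_\psi$, then $y \in \CurA_n$ and $\sum_j y_j g_j = \sum_i x_i \psi(g_i) = \psi(\sum_i x_i g_i) = 0$. So $\psi \mapsto \pi_\psi$ is a homomorphism $\Aut(S) \to \Aut(\mathcal{L}_G(S)) \cap S_n$, injective because $\pi_\psi = \mathrm{id}$ forces $\psi = \mathrm{id}$.

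The step I expect to be the crux is surjectivity, and this is where the quotient picture pays off. Take $\pi \in \Aut(\mathcal{L}_G(S)) \cap S_n$. A coordinate permutation is a group automorphism of $\CurA_n$ that here carries the subgroup $\mathcal{L}_G(S)$ onto itself, so it descends to an automorphism of $\CurA_n/\mathcal{L}_G(S)$, i.e., via $\phi$, to an automorphism $\theta_\pi \in \Aut(S)$. Transporting through $\phi$ gives $\theta_\pi(g_i) = \phi(e_{\pi(i)} - e_{\pi(0)}) = g_{\pi(i)} - g_{\pi(0)} = g_{\pi(i)}$, the last equality using $\pi(0) = 0$; thus $\pi_{\theta_\pi} = \pi$. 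The virtue of this viewpoint is that the homomorphism property of $g_i \mapsto g_{\pi(i)}$ comes for free from the quotient, sparing me the case distinctions (coinciding versus distinct indices in a relation $g_i + g_j = g_k$) that a direct verification would require. Combining the two directions yields the stated isomorphism.
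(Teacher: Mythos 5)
Your proof is correct. One thing to note up front: the paper itself offers no proof of this lemma --- it is imported from \cite{bottcher2015lattices} as a known result --- so there is no internal argument to compare against; your write-up is a genuine, self-contained justification. Your reading of $S_n$ as the stabilizer of the coordinate of $g_0=0$ is the intended one (the paper confirms this interpretation just after Theorem \ref{thm:ell_lat_aut_cap_sn}, where it explains that group automorphisms fix the neutral element, ``which is why we intersect \dots with $S_n$ instead of $S_{n+1}$''), and your translation argument showing this reading is forced is a nice touch. The substantive choice you made is to base everything on the isomorphism $\CurA_n/\mathcal{L}_G(S)\cong S$ induced by $\phi(x)=\sum_i x_i g_i$: the forward map $\psi\mapsto\pi_\psi$ is then a routine check, and surjectivity follows because any lattice-preserving coordinate permutation fixing coordinate $0$ descends to the quotient, i.e.\ to an automorphism of $S$ that acts on indices exactly as $\pi$ does. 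The argument in the cited source instead verifies directly that a permutation preserving $\mathcal{L}_G(S)$ must respect the group relations, by observing that each relation $g_i+g_j=g_k$ is encoded by a relation vector (such as $e_i+e_j-e_k-e_0$) lying in the lattice and chasing these vectors through the permutation, which requires the case distinctions you mention (repeated indices, relations involving $g_0$). Your quotient formulation packages all of those relations at once, which is both shorter and makes transparent exactly where the hypothesis that $S$ is a subgroup (rather than a mere generating set, as in Lemma \ref{lemm:abelian_group_aut_generating_S}) enters: it is what guarantees $\phi$ surjects onto $S$ itself.
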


In more general cases, $S$ is not a subgroup. In such cases, we need to consider the set of permutations on $S$ that extend to automorphisms of the entire group $G$. Let us denote this set by $\mathrm{Aut}(G,S)$. This set forms a subgroup of $\mathrm{Aut}(G)$.

\begin{lemma}\label{lemm:abelian_group_aut_generating_S}
    Let $G$ be a finite abelian group and $S$ a subset of $G$.
    Then $\mathrm{Aut}(G,S)$ is a subgroup of 
    $\mathrm{Aut}(\mathcal{L}_G(S)) \cap S_{n}$.\\
    Moreover, if $S$ is a generating set of $G$ then 
    $\mathrm{Aut}(G,S) \cong \mathrm{Aut}(\mathcal{L}_G(S))\cap S_{n}$.
\end{lemma}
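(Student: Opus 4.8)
The plan is to exhibit an explicit injective group homomorphism
\[
\Theta : \mathrm{Aut}(G,S) \longrightarrow \mathrm{Aut}(\mathcal{L}_G(S)) \cap S_n
\]
and then, under the generating hypothesis, to prove it is surjective. Throughout I identify a permutation $\tau \in \mathrm{Aut}(G,S)$ of $S = \{g_0 = 0, g_1, \ldots, g_n\}$ with the permutation $\sigma_\tau$ of the index set $\{0,1,\ldots,n\}$ determined by $\tau(g_i) = g_{\sigma_\tau(i)}$; since the $g_i$ are distinct this is unambiguous. As $\tau$ is the restriction of some group automorphism $\phi$ of $G$, it fixes the identity $g_0 = 0$, so $\sigma_\tau$ fixes the index $0$ and thus lies in $S_n$, acting on the coordinates $1,\ldots,n$.

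First I would set $\Theta(\tau) := \sigma_\tau$, viewed as the coordinate permutation $(\sigma_\tau \cdot x)_j = x_{\sigma_\tau^{-1}(j)}$ of $\mathbb{R}^{n+1}$, and verify it lands in $\mathrm{Aut}(\mathcal{L}_G(S))$. A coordinate permutation automatically preserves $\CurA_n$ (it preserves integrality and the coordinate-sum-zero condition), so the only nontrivial point is that it preserves the defining relation $\sum_i x_i g_i = 0$. For $x \in \mathcal{L}_G(S)$, writing $\phi$ for an automorphism of $G$ extending $\tau$,
\[
\sum_j (\sigma_\tau \cdot x)_j\, g_j = \sum_i x_i\, g_{\sigma_\tau(i)} = \sum_i x_i\, \phi(g_i) = \phi\Big(\sum_i x_i g_i\Big) = \phi(0) = 0 ,
\]
so $\sigma_\tau \cdot x \in \mathcal{L}_G(S)$; applying the same to $\tau^{-1}$ shows $\sigma_\tau$ is a bijection of the lattice. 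The map $\Theta$ is a homomorphism because $\sigma_{\tau\tau'} = \sigma_\tau \sigma_{\tau'}$, and it is injective because $\sigma_\tau = \mathrm{id}$ forces $\tau(g_i) = g_i$ for all $i$, i.e. $\tau = \mathrm{id}$. This already establishes the first assertion, and note it uses only that each $\tau$ extends to some automorphism of $G$, not that $S$ generates.

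For the second assertion I would prove surjectivity of $\Theta$ when $S$ generates $G$. Given $\sigma \in \mathrm{Aut}(\mathcal{L}_G(S)) \cap S_n$ (so $\sigma$ fixes the index $0$), the candidate preimage is $\phi : G \to G$ defined by $\phi(\sum_i x_i g_i) = \sum_i x_i g_{\sigma(i)}$. The hard part will be well-definedness: if $\sum_i x_i g_i = \sum_i y_i g_i$, then setting $z_i = x_i - y_i$ with $\sum_i z_i g_i = 0$, I must deduce $\sum_i z_i g_{\sigma(i)} = 0$. The vector $(z_i)$ need not lie in $\CurA_n$, so the lattice-automorphism property of $\sigma$ cannot be applied directly; the key trick is to replace $z_0$ by $z_0' := -\sum_{i \geq 1} z_i$, keeping $z_i' = z_i$ for $i \geq 1$. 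Because $g_0 = 0$, this alters neither $\sum_i z_i' g_i$ nor $\sum_i z_i' g_{\sigma(i)}$, yet now $(z_i') \in \CurA_n$ and $\sum_i z_i' g_i = 0$, so $(z_i') \in \mathcal{L}_G(S)$. Applying the lattice automorphism $\sigma$ and using $\sigma(0) = 0$, $g_{\sigma(0)} = g_0 = 0$, gives $\sum_i z_i' g_{\sigma(i)} = 0$, hence $\sum_i z_i g_{\sigma(i)} = 0$, which is exactly well-definedness.

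Once $\phi$ is well-defined it is visibly a group homomorphism; it is surjective since its image contains $S = \{g_{\sigma(i)}\}$, a generating set, and therefore bijective as $G$ is finite. Thus $\phi \in \mathrm{Aut}(G)$ restricts to the $S$-permutation $\tau : g_i \mapsto g_{\sigma(i)}$, so $\tau \in \mathrm{Aut}(G,S)$ with $\Theta(\tau) = \sigma$, proving surjectivity and hence the isomorphism. I expect the well-definedness step, and in particular the idea of adjusting the $g_0$-coordinate to enter $\CurA_n$ while exploiting $g_0 = 0$, to be the crux of the argument; everything else is routine bookkeeping.
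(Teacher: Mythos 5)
Your proof is correct. Note, however, that the paper itself offers no proof of this lemma: it is stated as an imported result from \cite{bottcher2015lattices} (Böttcher, Fukshansky, Garcia, Maharaj), so there is no in-paper argument to compare against; what you have written is essentially a self-contained reconstruction of the argument from that reference. Both halves check out: the embedding $\Theta$ only needs each permutation of $S$ to extend to \emph{some} automorphism of $G$, exactly as you observe, and the converse correctly identifies the crux as well-definedness of $\phi\bigl(\sum_i x_i g_i\bigr) = \sum_i x_i g_{\sigma(i)}$. Your device of replacing $z_0$ by $-\sum_{i\geq 1} z_i$ — harmless because $g_0 = 0$, but forcing the relation vector into $\CurA_n$ so that the lattice automorphism $\sigma$ can be applied — is precisely the step that makes the hypothesis $(x_i) \in \CurA_n$ in the definition of $\mathcal{L}_G(S)$ compatible with arbitrary integer relations among the generators, and it is also where the hypothesis that $\sigma$ fixes the index $0$ (i.e.\ $\sigma \in S_n$ rather than $S_{n+1}$) is genuinely used, since otherwise $g_{\sigma(0)}$ need not vanish. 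The conclusion via surjectivity onto the generating set $S$ and finiteness of $G$ is sound.
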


\subsection{Elliptic Function Field Lattices}

In Subsection \ref{subsec:RationalFFLattices}, the automorphism group of the rational function field lattice is completely determined. However, finding all possible automorphisms of a lattice is generally not an easy task. For this reason, we focus on identifying large subgroups of the automorphism group instead. There are two main approaches to finding such large subgroups. The first approach is to note that all function field lattices are sublattices of $\CurA_n$, so we search for a subgroup of $\mathrm{Aut}(\CurA_n)$ that stabilizes a sublattice $\mathcal{L}$. This approach is used in \cite{martinet2013perfect} when studying the Barnes lattice $\CurB_n$. Essentially, it involves determining which permutations in $S_{n+1}$ still stabilize the lattice.. 

A different approach is taken in \cite{bottcher2015lattices} for lattices generated by finite abelian groups, as explained in Subsection \ref{subsec:LatticesFromAbelianGroups}. This approach considers that these lattices have rank $n$ and therefore focuses on subgroups of $S_n$. As we see in this section, elliptic function field lattices fall into this category of lattices.

Function field lattices always have size $n+1$ and rank $n$. The key difference between these two approaches lies in how one views the automorphisms: either as permutations of the coordinates of the lattice vectors or as permutations of the basis vectors.

We combine elements from both approaches to, hopefully, provide a more complete understanding of the automorphism group of these lattices.

Let us first examine the automorphisms of elliptic function fields themselves, closely following the results from \cite{ma2020group}.

\begin{theorem}\label{thm:ell_ff_aut_group}
    Let $E$ be an elliptic function field over $\mathbb{F}_q$. 
    Let $\mathrm{Aut}(E,\mathcal{O})$ be the subgroup of 
    automorphisms over $\mathbb{F}_q$ which stabilize 
    the place at infinity $\mathcal{O}$ and let $T_E$ be the 
    translation group of $E$. Then the automorphism group of 
    $E$ is given by 
    $$ \mathrm{Aut}(E) = T_E \rtimes \mathrm{Aut}(E,\mathcal{O}).$$
\end{theorem}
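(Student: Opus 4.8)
The plan is to exploit the group structure that the distinguished place $\mathcal{O}$ confers on $E$. Via the classical isomorphism $E \to \mathrm{Cl}^0(E)$, $P \mapsto [P - \mathcal{O}]$, the points of the elliptic curve form an abelian group with identity $\mathcal{O}$, and for each $\mathbb{F}_q$-rational point $P$ the translation $\tau_P \colon Q \mapsto Q + P$ is an automorphism of $E$ defined over $\mathbb{F}_q$. These translations constitute the subgroup $T_E$. I would begin by recording these facts together with the observation that translations commute, $\tau_P \tau_Q = \tau_{P+Q}$, so that $T_E$ is abelian and isomorphic to the group of rational points, and that $\tau_P = \mathrm{id}$ forces $P = \mathcal{O}$.

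The heart of the argument is the decomposition $\mathrm{Aut}(E) = T_E \cdot \mathrm{Aut}(E,\mathcal{O})$. Given $\sigma \in \mathrm{Aut}(E)$, I set $P = \sigma(\mathcal{O})$ and consider $\phi := \tau_{-P} \circ \sigma$. Then $\phi(\mathcal{O}) = \mathcal{O}$, so $\phi$ stabilizes the place at infinity and hence $\phi \in \mathrm{Aut}(E,\mathcal{O})$, giving $\sigma = \tau_P \circ \phi$. To make this precise I would invoke the rigidity statement for elliptic curves: any automorphism that fixes $\mathcal{O}$ is automatically a homomorphism of the group structure, i.e.\ an element of $\mathrm{Aut}(E,\mathcal{O})$. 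This is the standard fact underlying the identification of isogenies with group homomorphisms, and it is exactly the step where the abelian-variety structure does the real work.

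With the product decomposition in hand, it remains to verify that the product is semidirect with $T_E$ as the normal factor. For normality I would compute, for $\sigma = \tau_Q \circ \psi$ with $\psi \in \mathrm{Aut}(E,\mathcal{O})$, the conjugate $\sigma \tau_P \sigma^{-1}$; since translations commute with one another and a group automorphism $\psi$ satisfies $\psi \tau_P \psi^{-1} = \tau_{\psi(P)}$ (a one-line check: $\psi(\psi^{-1}(R)+P) = R + \psi(P)$), the conjugate equals $\tau_{\psi(P)}$ and is again a translation, so $T_E \trianglelefteq \mathrm{Aut}(E)$. The intersection $T_E \cap \mathrm{Aut}(E,\mathcal{O})$ is trivial because a translation fixing $\mathcal{O}$ must be $\tau_{\mathcal{O}} = \mathrm{id}$. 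Combining normality, trivial intersection, and the decomposition yields $\mathrm{Aut}(E) = T_E \rtimes \mathrm{Aut}(E,\mathcal{O})$.

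I expect the only genuine obstacle to be the rigidity step asserting that an $\mathcal{O}$-fixing automorphism respects the group law; everything else is routine group theory. In the function-field formulation this can be seen by transporting $\sigma$ to the Picard group $\mathrm{Cl}^0(E)$, where the induced map on divisor classes is visibly additive, and then pulling back through the isomorphism $P \mapsto [P-\mathcal{O}]$.
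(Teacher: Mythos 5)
Your proof is correct, but note that the paper itself does not prove this statement: Theorem \ref{thm:ell_ff_aut_group} is quoted from \cite{ma2020group} and used as a black box, so there is no internal argument to compare against. Your decomposition is the standard one and is self-contained given two inputs: (a) translations by rational points are automorphisms of $E$ over $\mathbb{F}_q$, and (b) rigidity, i.e.\ an automorphism fixing $\mathcal{O}$ acts additively on points. One correction about where rigidity enters: since $\mathrm{Aut}(E,\mathcal{O})$ is \emph{defined} in the statement as the stabilizer of the place $\mathcal{O}$, the membership $\phi = \tau_{-P}\circ\sigma \in \mathrm{Aut}(E,\mathcal{O})$ follows by definition once $\phi(\mathcal{O})=\mathcal{O}$; no rigidity is needed at that step. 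Rigidity is genuinely needed only for the conjugation formula $\psi\tau_P\psi^{-1} = \tau_{\psi(P)}$ underlying the normality of $T_E$, where you do invoke it correctly, and your Picard-group justification (an $\mathcal{O}$-fixing automorphism induces an additive map on $\mathrm{Cl}^0(E)$ compatible with $P\mapsto [P-\mathcal{O}]$) is the right way to obtain it in the function-field setting. Two small points you should make explicit: $P=\sigma(\mathcal{O})$ is a rational place because automorphisms preserve degrees of places (Lemma \ref{lemma:aut_preserve_degree}), so $\tau_{-P}$ really does lie in $T_E$; and the action on places should be fixed with the covariant convention so that composition of field automorphisms matches composition of the induced place maps. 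Finally, it is worth observing that your conjugation computation is precisely the one the paper does carry out, one level up, in the proof of Lemma \ref{lemma:big_subgroup_ell_lat_auts} for the induced lattice automorphisms, so your argument fits naturally alongside the paper's use of this theorem.
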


Naturally, $T_E$ has the structure of the group of rational places of the elliptic function field, where each place $Q$ induces an automorphism $\tau_Q(P) = P \oplus Q$. On the other hand, the subgroup $\mathrm{Aut}(E, \mathcal{O})$ is a subgroup of the automorphisms of the elliptic curve and can have at most $24$ elements. In this case, $\mathrm{Inv}(\mathcal{P})$ is trivial since all of these maps affect the rational places of $E$. However, these automorphisms do not cover the entire automorphism group of these lattices. To fully understand the group, it is more effective to approach the lattices directly.

Let $\mathcal{P} = {P_0, P_1, \ldots, P_n }$ be the set of rational places of $E$, where $P_0 = \mathcal{O}$ is the place at infinity. Recall that the set of rational places in an elliptic function field forms a group. As mentioned earlier in this chapter, lattices from elliptic function fields are special cases of lattices generated by abelian groups. 
They can equivalently be defined by 
$$ \mathcal{L}_\mathcal{P} = \left\{x \in \CurA_n \: | \: 
\sum_{i=0}^{n} [x_i] \bP_i = \mathcal{O} \right\},$$
where $\bP_i$ denotes the corresponding point of the rational place $P_i$.

Now we can apply Lemma \ref{lemma:abelian_group_aut_1} to our lattices from elliptic function fields.

\begin{theorem}\label{thm:ell_lat_aut_cap_sn}
    Let $\mathcal{P}$ be the group of rational places of 
    an elliptic function field $E$. Let 
    $\mathcal{L}_\CurP$ denote the lattice from 
    $\mathcal{O}_\mathcal{P}^*$. 
    Then $$\mathrm{Aut}(\mathcal{L}_\mathcal{P}) \cap 
    S_n \cong \mathrm{Aut}(\mathcal{P}).$$
\end{theorem}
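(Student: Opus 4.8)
The plan is to realize the elliptic function field lattice $\CurL_\CurP$ as a lattice $\CurL_G(S)$ in the sense of Definition \ref{defn:lat_from_abelian_groups_and_S} and then invoke Lemma \ref{lemma:abelian_group_aut_1}. First I would set $G = \CurP$, the group of rational places of $E$, with the point $P_0 = \CurO$ playing the role of the identity element $g_0 = 0$, and take $S = \{\bP_0, \bP_1, \ldots, \bP_n\}$ to be the full set of group elements. The excerpt already records the key identification
\begin{align*}
    \CurL_\CurP = \left\{ x \in \CurA_n \: \mid \: \sum_{i=0}^{n} [x_i]\bP_i = \CurO \right\},
\end{align*}
which is precisely the defining condition of $\CurL_G(S)$ once we write the group law additively. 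So the first step is just to check that the hypotheses of Lemma \ref{lemma:abelian_group_aut_1} are met, and the crucial observation here is that when $\CurP$ is \emph{the whole} group of rational places, the set $S$ equals $G$ itself and is therefore trivially a subgroup of $G$.

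With $S = G$ a subgroup (indeed the full group), Lemma \ref{lemma:abelian_group_aut_1} applies directly and yields
\begin{align*}
    \mathrm{Aut}(S) \cong \mathrm{Aut}(\CurL_G(S)) \cap S_n.
\end{align*}
Substituting $S = G = \CurP$ and $\CurL_G(S) = \CurL_\CurP$ gives exactly $\mathrm{Aut}(\CurP) \cong \mathrm{Aut}(\CurL_\CurP) \cap S_n$, which is the desired conclusion. In this guise the proof is essentially a one-line deduction from the lemma, so the real content lies in justifying the reduction rather than in any new computation.

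The main obstacle, and the step I would spell out most carefully, is the verification that the two descriptions of $\CurL_\CurP$ genuinely coincide and that the indexing conventions line up. One has to confirm that the summation map $z \mapsto \sum_i [x_i]\bP_i$ with $x = \Phi_\CurP(z)$ really does land at $\CurO$ exactly when $z \in \CurO_\CurP^*$; this is the principal-divisor condition, since an element of the divisor group with support on $\CurP$ is principal if and only if its class in $\mathrm{Cl}^0(E)$ vanishes, and on an elliptic curve the Abel–Jacobi map identifies $\mathrm{Cl}^0(E)$ with the group of rational points via $\sum_i [x_i]\bP_i$. The other delicate point is the appearance of $S_n$ rather than $S_{n+1}$: because $g_0 = 0$ is fixed as the identity and the lattice has rank $n$, the relevant permutations are those of the $n$ nonzero group elements (equivalently the coordinates indexed $1, \ldots, n$ with the convention built into the lemma), and I would make sure the statement's $S_n$ matches the lemma's $S_n$ under the chosen labeling. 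Once these bookkeeping matters are settled, the isomorphism follows immediately.
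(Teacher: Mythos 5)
Your proposal is correct and matches the paper's approach exactly: the paper likewise identifies $\CurL_\CurP$ with $\CurL_G(S)$ for $G = S = \CurP$ (via the displayed description $\CurL_\CurP = \{x \in \CurA_n \mid \sum_{i=0}^{n} [x_i]\bP_i = \CurO\}$, which rests on the same Abel--Jacobi identification you invoke) and then applies Lemma \ref{lemma:abelian_group_aut_1} with $S$ equal to the whole group. Your write-up is, if anything, more careful than the paper's, which leaves the reduction and the indexing conventions implicit.
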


Recall that the group of rational places is isomorphic to the 
group $\mathbb{Z}/n_1 \mathbb{Z} \times \mathbb{Z}/ 
n_2\mathbb{Z}$ where $n_1 | n_2$ and $n_1 \cdot n_2 = n+1$.

The automorphism groups of abelian groups have been studied in \cite{hillar2007automorphisms}. Below, we provide a brief note on how this applies to $\mathrm{Aut}(\mathcal{P})$ in the following remark.

\begin{remark}\label{rmk:ell_ff_aut_groupauts}
    When the group $\CurP$ is cyclic, i.e. $\mathcal{P} \cong 
    \mathbb{Z}/(n+1) \mathbb{Z}$, then the number of 
    automorphisms is simply $\varphi(n+1)$, where $\varphi$ is the Euler Totient function. When the group is not cyclic, we use the fact that for two 
    groups $G$ and $H$ with relatively prime order we have 
    $\mathrm{Aut}(G\times H)\cong \mathrm{Aut}(G)\times \mathrm{Aut}(H)$.
    Thus, we reduce this problem to the case  $\mathcal{P} \cong \mathbb{Z} / p^{e_1} \mathbb{Z} \times
    \mathbb{Z} / p^{e_2} \mathbb{Z}$ where $p$ is prime and  
    $e_1 \leq e_2$. 
    We have two cases, the order of the automorphism group 
    $\mathrm{Aut}(\mathbb{Z} / p^{e_1} \mathbb{Z} \times
    \mathbb{Z} / p^{e_2} \mathbb{Z})$ is  
    \begin{align*}
        \begin{cases}
            \varphi(p^{e_1})^2 \cdot p^{2e_1 -1} 
            \cdot (p+1) &\text{ for } e_1 = e_2,\\
            \varphi(p^{e_1})\cdot \varphi(p^{e_2})
            \cdot p^{2e_1 -1} \cdot p &\text{ for } e_1 \neq e_2.
        \end{cases}
    \end{align*}
\end{remark}

Note that all automorphisms from $\mathrm{Aut}(\CurP)$ fix the place at infinity, and therefore fix the first coordinate of the lattice vectors. This is because an automorphism of a group fixes the neutral element, which is why we intersect the automorphism group $\mathrm{Aut}(\mathcal{L}\CurP)$ with $S_n$ instead of $S{n+1}$. However, this limits the types of maps we can consider for the automorphism group. What is missing is the group of translations $T_E$, which, as discussed earlier in this section, are also automorphisms of the elliptic function field. Recall that this subgroup is isomorphic to the group of rational places $\mathcal{P}$.

\begin{lemma}\label{lemma:big_subgroup_ell_lat_auts}
    Let $\mathcal{L}_\mathcal{P}$ be an elliptic function field 
    lattice with $\mathcal{P} = \mathbb{P}_E^{(1)}$. Then the 
    automorphism group $\mathrm{Aut}(\mathcal{L}_\mathcal{P})$ 
    has a subgroup isomorphic to 
    $$ \{ \pm \Id \} \times (\mathcal{P} \rtimes 
    \mathrm{Aut}(\mathcal{P}) ),$$
    where $\rtimes$ denotes the semi-direct product of two groups.
    
\end{lemma}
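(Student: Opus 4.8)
The plan is to realize the holomorph $\CurP \rtimes \Aut(\CurP)$ as a group of coordinate permutations preserving $\CurL_\CurP$, and then to enlarge it by the central involution $-\Id$. Throughout I use the description
$$\CurL_\CurP = \left\{ x \in \CurA_n \: \mid \: \sum_{i=0}^n [x_i]\bP_i = \mathcal{O} \right\}$$
from the discussion preceding Theorem \ref{thm:ell_lat_aut_cap_sn}, where $\bP_i$ is the point of the rational place $P_i$ and $\mathcal{O} = \bP_0$ is the origin.

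First I would consider the affine group of the abelian group $\CurP$, i.e. the holomorph $\CurP \rtimes \Aut(\CurP)$, whose elements are the maps $\phi_{\sigma, Q}\colon \bP \mapsto \sigma(\bP) \oplus Q$ for $\sigma \in \Aut(\CurP)$ and $Q \in \CurP$. Here the translations $\{\phi_{\id, Q}\}$ form the normal copy of $\CurP$ and $\Aut(\CurP)$ is the stabilizer of $\mathcal{O}$, with conjugation action $\sigma\,\phi_{\id, Q}\,\sigma^{-1} = \phi_{\id, \sigma(Q)}$; this is exactly the semidirect product in the statement, and the translations here are the lattice incarnations of the function-field translations $T_E$ of Theorem \ref{thm:ell_ff_aut_group}. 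Since each $\phi_{\sigma, Q}$ is a bijection of the finite set $\{\bP_0, \ldots, \bP_n\}$, it induces a permutation of the $n+1$ coordinates of $\mathbb{R}^{n+1}$, and this assignment is a faithful homomorphism into $S_{n+1}$ because the affine group acts faithfully on $\CurP$.

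The key step is to check that each such permutation preserves $\CurL_\CurP$. Writing $\pi$ for the permutation with $\bP_{\pi(i)} = \sigma(\bP_i)\oplus Q$ and letting $y$ be the image of $x \in \CurL_\CurP$ under $\pi$, I would compute
$$\sum_{j} [y_j]\bP_j = \sum_i [x_i]\bP_{\pi(i)} = \sum_i [x_i]\bigl(\sigma(\bP_i)\oplus Q\bigr) = \sigma\Bigl(\sum_i [x_i]\bP_i\Bigr)\oplus \Bigl[\sum_i x_i\Bigr]Q.$$
Because $x \in \CurA_n$ we have $\sum_i x_i = 0$, so the translation term collapses to $\mathcal{O}$; and since $\sum_i [x_i]\bP_i = \mathcal{O}$ and $\sigma$ is a group automorphism, the first term is also $\mathcal{O}$. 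Hence $y \in \CurL_\CurP$. This is the crux of the argument, and I expect it to be the main obstacle to state cleanly: it is precisely the defining constraint $\sum_i x_i = 0$ of $\CurA_n$ that annihilates the translation contribution, which is why the whole affine group, and not merely $\Aut(\CurP)$, acts. This produces an injective homomorphism $\CurP \rtimes \Aut(\CurP) \hookrightarrow \Aut(\CurL_\CurP) \cap S_{n+1}$ whose restriction to the stabilizer of $\mathcal{O}$ recovers the isomorphism $\Aut(\CurP) \cong \Aut(\CurL_\CurP)\cap S_n$ of Theorem \ref{thm:ell_lat_aut_cap_sn}, so the two pictures are consistent.

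Finally I would adjoin $-\Id$. As noted in the remark following Theorem \ref{thm: Gen_Aut_FF_Lat}, $-\Id$ is an automorphism of every lattice. Since permutation matrices have nonnegative entries while $-\Id$ has $-1$ on the diagonal, $-\Id \notin S_{n+1}$ for $n \geq 1$, so it lies outside the image $H \cong \CurP \rtimes \Aut(\CurP)$; moreover $-\Id$ commutes with every permutation matrix and squares to $\Id$. Therefore $\langle H, -\Id\rangle = H \times \{\pm \Id\} \cong \{\pm \Id\} \times (\CurP \rtimes \Aut(\CurP))$, the desired subgroup of $\Aut(\CurL_\CurP)$. Beyond the lattice-preservation computation, the remaining assembly into this direct-times-semidirect product is formal, using only that $-\Id$ is central among permutation matrices and is not itself one.
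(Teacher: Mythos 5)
Your proof is correct, but it takes a genuinely different route from the paper's. The paper assembles the subgroup from two previously established pieces: it gets $\Aut(\CurP)$ inside $\Aut(\CurL_\CurP)$ from Theorem \ref{thm:ell_lat_aut_cap_sn}, and it gets the translations by pushing $T_E \cong \CurP$ (Theorem \ref{thm:ell_ff_aut_group}) through the injective homomorphism $\Psi_\CurP$ of Theorem \ref{thm: Gen_Aut_FF_Lat}; it then checks that these two subgroups of $\Aut(\CurL_\CurP)$ intersect trivially and that conjugation satisfies $\sigma^{-1}\circ\tau_P\circ\sigma = \tau_{\sigma^{-1}(P)}$, so their product is the claimed semidirect product. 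You bypass the function-field machinery entirely: you realize the whole holomorph $\CurP \rtimes \Aut(\CurP)$ at once as coordinate permutations and verify by a single computation that each affine map $\bP \mapsto \sigma(\bP)\oplus Q$ preserves $\CurL_\CurP$, with the defining constraint $\sum_i x_i = 0$ of $\CurA_n$ annihilating the translation term. Your route is more self-contained and makes the semidirect-product structure automatic (it is inherited from the holomorph), whereas the paper's route has the conceptual payoff, in line with its stated theme, of exhibiting the translations as images of genuine function-field automorphisms under $\Psi_\CurP$. You are also more careful than the paper on one point: the paper's proof never explains how the factor $\{\pm\Id\}$ is adjoined, while you do. One small caveat in that step: arguing that $-\Id$ is not a permutation because permutation matrices have nonnegative entries compares matrices on $\R^{n+1}$, but elements of $\Aut(\CurL_\CurP)$ should be compared as maps on the span of the lattice, and distinct ambient matrices can agree there (for $n=1$ the coordinate swap restricts exactly to $-\Id$). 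For $n\geq 2$ your conclusion is correct and needs only one extra line: a permutation $\pi$ restricting to $-\Id$ on the hyperplane would have to send $e_i - e_j$ to $e_j - e_i$ for every pair $i\neq j$, forcing $\pi$ to transpose every pair simultaneously, which is impossible once there are at least three coordinates.
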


\begin{proof}
    We know 
    that by Theorem \ref{thm:ell_lat_aut_cap_sn} that $\mathrm{Aut}(\mathcal{P})$ is a subgroup of 
    $\mathrm{Aut}(\mathcal{L}_\mathcal{P})$. 
    By Theorem \ref{thm:ell_ff_aut_group} and Theorem \ref{thm: Gen_Aut_FF_Lat} we map over the group of translations $T_E$, which is isomorphic 
    to $\mathcal{P}$, to the automorphism group of the lattice. 
    This implies that $\mathcal{P}$ is also a 
    subgroup of $\mathrm{Aut}(\mathcal{L}_\mathcal{P})$. 
    It is easy to see that the intersection of these two 
    subgroups is only the identity, since all other 
    elements from 
    $\mathcal{P} \cong T_E$ shift the coordinate of the lattice 
    associated to $\mathcal{O}$ and all elements in 
    $\mathrm{Aut}(\mathcal{P})$ fix it. 
    
    Now we claim that $\mathcal{P}$ is normal in the 
    product $\mathcal{P}\cdot \mathrm{Aut}(\mathcal{P})$.
    Let $P,Q,R \in \mathcal{P}$ and $\sigma \in 
    \mathrm{Aut}(\mathcal{P})$ so that $\tau_Q \circ \sigma$ is 
    an element in the group $\mathcal{P}\cdot \mathrm{Aut}(\mathcal{P})$ and $\tau_P$ is an element in $\CurP \cong T_E$. Then 
    \begin{align*}
    (\sigma^{-1}\circ \tau_{-Q})\circ \tau_P\circ (\tau_Q\circ \sigma) (R) 
    &= \sigma^{-1}(\sigma(R)+Q+P-Q) \\
    &= \sigma^{-1}(\sigma(R)+P) \\
    &= R+ \sigma^{-1}(P)\\
    &= \tau_{\sigma^{-1}(P)}(R).  
    \end{align*}

    This proves that the product of these two subgroups is 
    isomorphic to their semidirect product. Hence 
    $\mathcal{P} \rtimes \mathrm{Aut}(\mathcal{P})$ is 
    isomorphic to a subgroup of $\mathrm{Aut}(\mathcal{L}_\mathcal{P})$.
\end{proof}

To the best of our knowledge, this provides a fairly large subgroup of the automorphism group of elliptic function field lattices. It is difficult to estimate how much larger the full group might be. However, in the specific case where $\mathcal{P}$ is cyclic, we can determine the full automorphism group. Indeed, when $\mathcal{P}$ is cyclic then this lattice is the 
Barnes lattice.
In \cite{martinet2013perfect} it has been proven that 
for $n \geq 11$ the subgroup in 
Lemma \ref{lemma:big_subgroup_ell_lat_auts} is the 
full automorphism group of the lattice and has order 
$2\cdot (n+1) \cdot \varphi(n+1)$ where $\varphi$ is the Euler Totient function.

\begin{remark}
\label{rmk:more_auts_with_dual}
    In this section we almost treat the automorphism group of the 
    Barnes lattice $\CurB_n$, like a subgroup of that of the root lattice $\CurA_n$. However, in some dimensions $\CurB_n$ has 
    automorphism which $\CurA_n$ does not have, i.e. it has more than 
    those we have shown so far. 
    This is also explained in \cite{martinet2013perfect}, that we 
    can use the dual of the lattice to perhaps find more automorphism of the lattice. By \cite[Theorem 5.3.7.]{martinet2013perfect} for $6\leq n \leq 10$ the dual of the Barnes lattice $\CurB_n^*$ has more 
    vectors of length $d(\CurA_n^*)$ than $\CurA_n^*$ does.
    
\end{remark}

\subsection{Hermitian Function Field Lattices}

In this section, we closely follow the results from \cite{bottcher2016lattices}. Hermitian function field lattices are also lattices derived from finite abelian groups. In this case, the group $G$ is the zero class group $\mathrm{Cl}^0(F)$, and $S$ is the 
set of divisor classes
\begin{align*}
   S &= \big\{ [P_i-P_\infty ] \mid 0\leq i \leq n \big\}, 
\end{align*} where the places $P_0 = P_\infty, P_1, \ldots, P_{n}$ are all the rational 
places in $H/K(x)$. Then, using the notation as in 
Definition \ref{defn:lat_from_abelian_groups_and_S}, the Hermitian function field lattice 
$\mathcal{L}_\mathcal{P}$ is isomorphic to 
\begin{align*}
 \mathcal{L}_{\mathrm{Cl}^0(F)}(S) \coloneqq 
\left\{ 
    (x_0, \ldots, x_{n}) \in \CurA_{n} \: \mid \: \sum_{i=1}^{n} x_i [P_i-P_\infty ] = [0] \in \mathrm{Cl}^0(F) \right\} .   
\end{align*}

In this case $S$ is a generating set of the zero class group, while $\CurP$ is the set of rational places in $H$. Therefore, by Lemma \ref{lemm:abelian_group_aut_generating_S} we conclude 
the following result on the automorphism group of the function field lattice. Recall that $\mathrm{Aut}(\mathrm{Cl}^0(F),S)$ is the group 
of permutations on $S$ that extend to automorphisms of $\mathrm{Cl}^0(F)$.

\begin{lemma}
    Let $H/K(x)$ be a hermitian function field. Let 
    $\mathcal{P}$ be the set of rational places of $H$. 
    Then $\mathrm{Aut}(\mathrm{Cl}^0(F),S) \cong \mathrm{Aut}(\mathcal{L}_\mathcal{P} )
    \cap S_{n}$.
\end{lemma}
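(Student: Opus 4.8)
The plan is to read this off from the ``moreover'' clause of Lemma \ref{lemm:abelian_group_aut_generating_S}, applied with the abelian group $G = \mathrm{Cl}^0(F)$ and the subset $S = \{[P_i - P_\infty] \mid 0 \leq i \leq n\}$. First I would record the identification, set up in the paragraph preceding the statement, that the Hermitian function field lattice $\mathcal{L}_\mathcal{P}$ is isomorphic to the group lattice $\mathcal{L}_{\mathrm{Cl}^0(F)}(S)$ of Definition \ref{defn:lat_from_abelian_groups_and_S}: sending $\Phi_\mathcal{P}(z)$ to the tuple of its coordinates, the defining relation $\sum_i x_i [P_i - P_\infty] = [0]$ in $\mathrm{Cl}^0(F)$ is precisely the condition that $\sum_i x_i(P_i - P_\infty)$ be principal, which is exactly what membership in $\mathcal{L}_\mathcal{P}$ encodes. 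Once this identification is in place, the assertion to be proved is literally Lemma \ref{lemm:abelian_group_aut_generating_S} for this particular pair $(G,S)$, conditional only on $S$ being a generating set.

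Hence the one substantive point is to certify that the classes $[P_i - P_\infty]$ generate $\mathrm{Cl}^0(F)$. The clean route is Theorem \ref{thm:ClassGroup_generators} with $d = 1$ and $D_0 = P_\infty$. The constant field of a Hermitian function field is $\mathbb{F}_{q^2}$, so the relevant field size in that theorem is $q^2$ and the hypothesis for $d=1$ reads $N_1(F/\mathbb{F}_{q^2}) > (g-1)\cdot 2\sqrt{q^2} = (g-1)\cdot 2q$. Using the standard data for the Hermitian curve, namely $N_1 = q^3 + 1$ rational places and genus $g = q(q-1)/2$, a direct comparison gives $q^3 + 1 > q^3 - q^2 - 2q = (g-1)\cdot 2q$, which holds for every $q$. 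Therefore the hypothesis is satisfied, the degree-one place classes generate $\mathrm{Cl}^0(F)$, and so $S$ is a generating set. Alternatively one may simply cite the corresponding generation result for Hermitian fields from \cite{bottcher2016lattices}.

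Feeding the generating property into the second assertion of Lemma \ref{lemm:abelian_group_aut_generating_S} yields $\mathrm{Aut}(\mathrm{Cl}^0(F),S) \cong \mathrm{Aut}(\mathcal{L}_{\mathrm{Cl}^0(F)}(S)) \cap S_n$, and transporting along the lattice isomorphism above gives the stated $\mathrm{Aut}(\mathrm{Cl}^0(F),S) \cong \mathrm{Aut}(\mathcal{L}_\mathcal{P}) \cap S_n$. I expect the generating-set verification to be essentially the only obstacle; the remainder is bookkeeping around the identification of the two lattice descriptions. One subtlety worth flagging is that Theorem \ref{thm:ClassGroup_generators} furnishes generators from all places of degree dividing $d=1$, so I would confirm that every such place has degree exactly one and hence lies in $\mathcal{P}$, ensuring the generating classes genuinely sit inside $S$ rather than arising from places outside $\mathcal{P}$.
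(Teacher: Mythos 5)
Your proposal is correct and follows essentially the same route as the paper: identify $\mathcal{L}_\mathcal{P}$ with the group lattice $\mathcal{L}_{\mathrm{Cl}^0(F)}(S)$ and apply the ``moreover'' clause of Lemma \ref{lemm:abelian_group_aut_generating_S}, since $S$ generates $\mathrm{Cl}^0(F)$. The only difference is that the paper simply asserts the generating property (following \cite{bottcher2016lattices}), whereas you verify it directly via Theorem \ref{thm:ClassGroup_generators} with $d=1$, $D_0=P_\infty$, and the Hermitian data $N_1=q^3+1$, $g=q(q-1)/2$ over $\mathbb{F}_{q^2}$ --- a correct and self-contained supplement, not a different argument.
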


\begin{remark}
As it is mentioned in \cite{bottcher2016lattices}, we know that the zero class  group of Hermitian function fields is isomorphic to the group $\mathbb{Z}_{q+1}^{q^2-q}$. 
Hence we know that $\mathrm{Aut}(\mathrm{Cl}^0(F),\mathcal{P})$, and therefore by extension $\mathrm{Aut}(\CurL_\CurP)\cap S_{n}$, is a subgroup of 
$\mathrm{Aut}(\mathbb{Z}_{q+1}^{q^2-q})$.
\end{remark}

Next, we check the connection between the automorphisms 
of the function field and those of the lattice. From Theorem \ref{thm: Gen_Aut_FF_Lat},
we know that $\mathrm{Aut}(H)/\mathrm{Inv}(\mathcal{P})$ is a 
subgroup of the automorphism group of the lattice. 
Fortunately, it is shown in \cite{bottcher2016lattices} that in the case of Hermitian function fields 
$\mathrm{Inv}(\mathcal{P})$ is trivial. 

\begin{theorem}
    $\mathrm{Aut}(H)$ is a subgroup of $\mathrm{Aut}(
    \mathcal{L}_\mathcal{P})$. 
    
\end{theorem}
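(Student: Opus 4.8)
The plan is to obtain the embedding $\mathrm{Aut}(H) \hookrightarrow \mathrm{Aut}(\mathcal{L}_\mathcal{P})$ as a direct application of the general machinery developed above, once a single obstruction is removed. Since $\mathcal{P} = \mathbb{P}_H^1$ is exactly the set of all places of degree one, it is a set of places of fixed degree, so $\mathrm{Inv}(\mathcal{P})$ is a normal subgroup of $\mathrm{Aut}(H)$ and Theorem \ref{thm: Gen_Aut_FF_Lat} applies verbatim. It tells us that the induced map $\Psi_\CurP$ is an injective group homomorphism, hence $\mathrm{Aut}(H)/\mathrm{Inv}(\mathcal{P})$ is isomorphic to a subgroup of $\mathrm{Aut}(\mathcal{L}_\mathcal{P})$, with $[\sigma]$ acting by $\Phi_\CurP(z) \mapsto \Phi_\CurP(\sigma(z))$.

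The only gap between this and the claimed statement is the quotient by $\mathrm{Inv}(\mathcal{P})$. First I would observe that if $\mathrm{Inv}(\mathcal{P}) = \{\id\}$, then the quotient collapses, $\mathrm{Aut}(H)/\mathrm{Inv}(\mathcal{P}) \cong \mathrm{Aut}(H)$, and the injectivity of $\Psi_\CurP$ upgrades immediately to an embedding of the full group $\mathrm{Aut}(H)$ into $\mathrm{Aut}(\mathcal{L}_\mathcal{P})$. So the entire proof reduces to showing that the only automorphism of $H/K$ fixing every rational place is the identity.

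This triviality of $\mathrm{Inv}(\mathcal{P})$ is the real content, and I would establish it by a fixed-point count. The Hermitian function field has genus $g = q(q-1)/2 \geq 2$ and exactly $q^3+1$ rational places. By the Lefschetz fixed-point formula on the first cohomology, a nontrivial automorphism of a curve of genus $g \geq 2$ fixes at most $2g+2$ points, since the trace of $\sigma^*$ on the $2g$-dimensional $H^1$ is bounded below by $-2g$. An element of $\mathrm{Inv}(\mathcal{P})$ fixes all $q^3+1$ rational places, so it would have to fix more than $2g+2 = q^2-q+2$ points; but $q^3+1 - (q^2-q+2) = (q-1)(q^2+1) > 0$ for every $q \geq 2$. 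Hence no nontrivial automorphism can fix all rational places, forcing $\mathrm{Inv}(\mathcal{P}) = \{\id\}$. Alternatively, one may simply cite the explicit verification of this fact in \cite{bottcher2016lattices}.

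I expect the fixed-point bound to be the only genuinely nontrivial ingredient; everything else is a formal consequence of Theorem \ref{thm: Gen_Aut_FF_Lat}. The subtlety to watch is that the $2g+2$ bound counts geometric fixed points, so I would note that the $\mathbb{F}_{q^2}$-rational places of $\mathcal{P}$ are honest fixed points of the geometric automorphism and thus legitimately contribute to the Lefschetz number; and that the inequality $q^3+1 > 2g+2$ must be checked to hold uniformly, including the smallest admissible $q$. If one prefers to sidestep these bookkeeping points entirely, invoking \cite{bottcher2016lattices} for the triviality of $\mathrm{Inv}(\mathcal{P})$ is the cleanest route to the conclusion $\mathrm{Aut}(H) \subset \mathrm{Aut}(\mathcal{L}_\mathcal{P})$.
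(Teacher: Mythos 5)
Your proposal is correct and follows the same route as the paper. The paper's proof of this theorem is entirely contained in the two sentences preceding it: invoke Theorem \ref{thm: Gen_Aut_FF_Lat} to embed $\mathrm{Aut}(H)/\mathrm{Inv}(\mathcal{P})$ into $\mathrm{Aut}(\mathcal{L}_\mathcal{P})$, and then cite \cite{bottcher2016lattices} for the fact that $\mathrm{Inv}(\mathcal{P})$ is trivial --- which is precisely your fallback option. Where you go beyond the paper is in supplying a self-contained proof of that triviality via the Lefschetz fixed-point bound: a nontrivial finite-order automorphism fixes at most $2g+2$ geometric points, whereas an element of $\mathrm{Inv}(\mathcal{P})$ would fix all $q^3+1$ rational places, and $q^3+1-(q^2-q+2)=(q-1)(q^2+1)>0$. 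This is sound and buys independence from the reference, at the cost of needing the trace formula in positive characteristic (where wild fixed points have local index possibly greater than $1$, but still positive, so the bound survives). One small inaccuracy to fix: the Hermitian function field over $\mathbb{F}_{q^2}$ has genus $g=q(q-1)/2$, which equals $1$ when $q=2$, so your claim $g\geq 2$ fails in that edge case. This is harmless, since the $2g+2$ bound for a nontrivial finite-order automorphism does not actually require $g\geq 2$ (the eigenvalues of $\sigma^*$ on the $2g$-dimensional $H^1$ are roots of unity and the local terms at fixed points are positive integers), and your inequality $(q-1)(q^2+1)>0$ already covers $q=2$; alternatively, one may exclude $q=2$ outright as the degenerate case where $H$ is elliptic rather than properly Hermitian.
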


Much is known on the automorphisms of Hermitian function fields, see \cite{stichtenoth}.

\subsection{Hyperelliptic Function Field Lattices}\label{subsec:Automorphism_Hyperelliptic_FF_Lat}

In this section, we represent our main results on automorphism groups of hyperelliptic function fields. 

\begin{lemma}\label{lemma:hyp_latt_similar_to_A}
    Let $H$ be a hyperelliptic function field with defining 
    equation $y^2 = f(x)$ for a square free polynomial $f(x)$. 
    Let $\CurP$ be the set of ramified places of degree $1$ 
    and inert places of of degree $2$ in $H$, with 
    $| \mathcal{P}| = n+1$. 
    If either $\mathrm{char}(H) = 2$, or 
    $\mathrm{char}(H) \neq 2$ and $f(x)$ does not split into 
    linear factors, then the hyperelliptic function field lattice 
    $\CurL_\CurP$ is similar to $\CurA_n$.
\end{lemma}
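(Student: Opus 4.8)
The plan is to establish the stronger and cleaner statement that $\CurL_\CurP = 2\cdot\CurA_n$ exactly; similarity to $\CurA_n$ then follows immediately, since scaling by $\tfrac{1}{2}$ is a similarity of lattices. This also matches the equality $\CurL_\CurP = 2\cdot\CurA_n$ already asserted in the Remark following Theorem \ref{thm:Hyp_ell_ff_lat_det_ram}.

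First I would invoke the generating set recorded in Subsection \ref{eg:HyperellipticFuncFieldLattices}: under either hypothesis ($\mathrm{char}(H)=2$, or $\mathrm{char}(H)\neq 2$ with $f(x)$ not splitting into linear factors), the element $u$ does \emph{not} lie in $\CurO_\CurP^*$, so the lattice is generated purely by the vectors $\Phi_\CurP(x-\alpha_i)$ for $2\leq i\leq r$ and $\Phi_\CurP(x-\beta_j)$ for $1\leq j\leq s$. Next I would compute these generators explicitly from the principal divisors listed in the same subsection. Since $(x-\alpha_i)^F = 2P_i - 2P_\infty$ and $\deg(P_i)=1$, the vector $\Phi_\CurP(x-\alpha_i)$ has entry $2$ in the coordinate of $P_i$, entry $-2$ in the coordinate of $P_\infty$, and zeros elsewhere; that is, it equals $2(e_k - e_0)$, where $e_k$ is the basis vector of the $P_i$-coordinate and $e_0$ that of $P_\infty$. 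Likewise $(x-\beta_j)^F = Q_j - 2P_\infty$, but now $\deg(Q_j)=2$, so the $Q_j$-coordinate is $v_{Q_j}(x-\beta_j)\cdot\deg(Q_j) = 1\cdot 2 = 2$, again yielding $2(e_k - e_0)$. The conceptual point is that ramified degree-one places and inert degree-two places are handled uniformly: the factor $2$ from the ramification-doubled valuation and the factor $2$ from the degree of an inert place play the same role in the definition of $\Phi_\CurP$.

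Finally I would observe that, as $i$ and $j$ range over all their values, the coordinates indexed by the generators sweep out all of $1,\dots,n$, so the generating set is exactly $\{\,2(e_k - e_0) : 1\leq k\leq n\,\}$. Because $\{e_k - e_0 : 1\leq k\leq n\}$ is a $\Z$-basis of $\CurA_n$, its termwise double generates precisely $2\CurA_n$. Hence $\CurL_\CurP = 2\CurA_n$, and the map $v\mapsto \tfrac12 v$ gives a similarity onto $\CurA_n$. I do not expect a genuine obstacle here; the only step needing care is the bookkeeping of the degree factor in $\Phi_\CurP$, which is exactly what forces the inert and ramified generators to take the same shape and thereby collapses the whole lattice onto a uniform scaling of $\CurA_n$.
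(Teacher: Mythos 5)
Your proposal is correct, and it rests on the same core computation as the paper's proof, but it is packaged more directly and actually proves more. The paper's own proof invokes Theorem \ref{thm:Hyperll_ff_lat_basis_vs_well-rounded} to get that $\CurL_\CurP$ has a basis of minimal vectors, notes that all minimal vectors have the form $(0,\ldots,0,2,0,\ldots,0,-2,0,\ldots,0)$, and concludes that the basis matrices satisfy $B_H = 2\cdot B_A$; as written, this quietly uses the fact that $n$ linearly independent minimal vectors of $\CurA_n$ (i.e.\ vectors $e_i - e_j$ corresponding to a spanning tree on the $n+1$ coordinates) automatically form a basis of $\CurA_n$, a point the paper does not justify. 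You bypass this entirely: by computing the generators $\Phi_\CurP(x-\alpha_i)$ and $\Phi_\CurP(x-\beta_j)$ explicitly from their principal divisors, you see they are exactly $2(e_k - e_0)$ for $k = 1,\ldots,n$, and since $\{e_k - e_0\}$ is a standard $\Z$-basis of $\CurA_n$, you get the exact equality $\CurL_\CurP = 2\CurA_n$ rather than mere similarity. This stronger equality is precisely what the Remark following Theorem \ref{thm:Hyp_ell_ff_lat_det_ram} attributes to this lemma but which the lemma's stated proof only establishes up to the similarity claim, so your route both closes a small gap and delivers the statement the paper later relies on. Your observation that the degree factor for inert places and the ramification factor for degree-one places play identical roles in $\Phi_\CurP$ is exactly the right bookkeeping point.
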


\begin{proof}
    By Theorem \ref{thm:Hyperll_ff_lat_basis_vs_well-rounded} we 
    know that when $\mathrm{char}(H) = 2$, or when $f(x)$ does not split into linear factors, that the lattice $\CurL_\CurP$ has a basis 
    of minimal vectors. On top of this, by Section \ref{eg:HyperellipticFuncFieldLattices} we know that these basis vectors and the minimal vectors of 
    $\CurL_\CurP$ are all vectors of the form 
    \begin{align*}
        (0, \ldots, 0, 2, 0, \ldots, 0, -2, 0, \ldots, 0)
    \end{align*}
    These vectors are all exactly $2$ times the minimal vectors of 
    the root lattice $\CurA_n$, which also form a basis of $\CurA_n$. 
    Hence there are basis matrices of these two lattices, which fulfill 
    the relation 
    \begin{align*}
        B_H = 2\cdot B_A.
    \end{align*}
    This implies that they are similar.
\end{proof}
This result together with the following lemma yields an immediate corollary on the automorphism group of hyperelliptic function field lattices.

\begin{lemma}\label{lemma:similar_lat_iso_aut}
    If two lattices are similar then their automorphism 
    groups are isomorphic.
\end{lemma}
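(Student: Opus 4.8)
The plan is to work with the geometric descriptions of the two notions involved. Two lattices $\CurL_1$ and $\CurL_2$ being \emph{similar} means that there is a scalar $c > 0$ and a Euclidean isometry $U$ fixing the origin (equivalently, an orthogonal map) with $\CurL_2 = c\,U(\CurL_1)$; correspondingly, $\mathrm{Aut}(\CurL_i)$ is the group of orthogonal transformations $\sigma$ satisfying $\sigma(\CurL_i) = \CurL_i$. First I would define the candidate map
\[
    \phi : \mathrm{Aut}(\CurL_1) \to \mathrm{Aut}(\CurL_2), \qquad \sigma \mapsto U\sigma U^{-1},
\]
that is, conjugation by the orthogonal part of the similarity. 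The key observation that makes everything go through is that scalar multiplication by $c$ is central among linear maps, so it commutes with every orthogonal transformation and thus never interferes with the computations.

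Next I would verify that $\phi$ is well defined, i.e.\ that $U\sigma U^{-1}$ is genuinely an automorphism of $\CurL_2$. It is orthogonal, being a composite of orthogonal maps; and it preserves $\CurL_2$ since
\[
    (U\sigma U^{-1})(\CurL_2) = U\sigma U^{-1}\bigl(c\,U(\CurL_1)\bigr) = c\,U\sigma(\CurL_1) = c\,U(\CurL_1) = \CurL_2,
\]
using that $c$ commutes with $U\sigma U^{-1}$ and that $\sigma(\CurL_1) = \CurL_1$. Then I would record the group-theoretic properties. The map $\phi$ is a homomorphism because conjugation is one: $\phi(\sigma_1\sigma_2) = U\sigma_1\sigma_2 U^{-1} = (U\sigma_1 U^{-1})(U\sigma_2 U^{-1}) = \phi(\sigma_1)\phi(\sigma_2)$. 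It is injective, since $U\sigma U^{-1} = \mathrm{Id}$ forces $\sigma = \mathrm{Id}$. For surjectivity I would exhibit the inverse assignment $\tau \mapsto U^{-1}\tau U$: the same computation as above (with $U^{-1}$ in place of $U$, and using $U(\CurL_1) = \tfrac{1}{c}\CurL_2$) shows $U^{-1}\tau U \in \mathrm{Aut}(\CurL_1)$, and $\phi(U^{-1}\tau U) = \tau$. Hence $\phi$ is a group isomorphism.

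There is no real obstacle here; the only point requiring any care is confirming that the scaling factor $c$ does not obstruct the argument, which it does not precisely because automorphisms are norm-preserving and a homothety commutes with every orthogonal map. As an alternative route, one could argue via Gram matrices: a similarity replaces a Gram matrix $G$ by $c^2 G$ (the orthogonal conjugation leaving $G$ unchanged), and the integral automorphism group $\{M \in \mathrm{GL}_n(\Z) : M^{\mathsf{T}} G M = G\}$ is manifestly unaltered when $G$ is rescaled by the positive factor $c^2$, so the two lattices share the same automorphism group as subgroups of $\mathrm{GL}_n(\Z)$. In the application of interest we have $B_H = 2\cdot B_A$ from Lemma \ref{lemma:hyp_latt_similar_to_A}, so this gives $\mathrm{Aut}(\CurL_\CurP) \cong \mathrm{Aut}(\CurA_n)$ directly.
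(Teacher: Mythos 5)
Your proposal is correct. The paper actually states this lemma without any proof, treating it as a standard fact, so there is nothing to compare against; your conjugation argument $\sigma \mapsto U\sigma U^{-1}$ (with the observation that the homothety factor $c$ commutes with all linear maps) is precisely the standard justification, and both it and your Gram-matrix alternative $G \mapsto c^2 G$ are complete and correct, including in the paper's application where $B_H = 2\cdot B_A$ is a pure scaling with $U = \Id$.
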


\begin{corollary}
    Let $\CurL_\CurP$ be a hyperelliptic function field lattice as 
    defined in Lemma \ref{lemma:hyp_latt_similar_to_A}.
    Then the automorphism group of $\mathcal{L}_\mathcal{P}$ is 
    $$\mathrm{Aut}(\mathcal{L}_\mathcal{P}) 
    \cong \{ \pm \Id \} \times S_{n+1}.$$
\end{corollary}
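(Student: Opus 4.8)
The plan is to chain together the three preceding results, since the corollary is essentially a direct consequence of them and requires no new computation. First I would invoke Lemma~\ref{lemma:hyp_latt_similar_to_A}, which under exactly the stated hypotheses (either $\mathrm{char}(H)=2$, or $\mathrm{char}(H)\neq 2$ with $f(x)$ not splitting into linear factors) guarantees that the hyperelliptic function field lattice $\CurL_\CurP$ is similar to the root lattice $\CurA_n$. This is the only place where the hypotheses of the corollary are actually used; they are inherited verbatim from the lemma, so the corollary's assumption that $\CurL_\CurP$ is ``as defined in Lemma \ref{lemma:hyp_latt_similar_to_A}'' is precisely what licenses this first step.

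Next I would apply Lemma~\ref{lemma:similar_lat_iso_aut}, which asserts that similar lattices have isomorphic automorphism groups. Combining this with the previous step immediately yields $\mathrm{Aut}(\CurL_\CurP) \cong \mathrm{Aut}(\CurA_n)$. Finally, I would appeal to Lemma~\ref{Lemma:AutOfRoot}, which identifies $\mathrm{Aut}(\CurA_n)$ explicitly as $\{\pm\Id\}\times S_{n+1}$ for $n\geq 2$. Substituting this description into the isomorphism obtained from the second step produces the claimed identity $\mathrm{Aut}(\CurL_\CurP) \cong \{\pm\Id\}\times S_{n+1}$, completing the argument.

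Since every step is a direct citation of an already-established result, there is no genuine obstacle here; the substance of the corollary lives entirely in the proofs of those three lemmas. The only bookkeeping point I would verify is that the dimension hypothesis $n\geq 2$ needed by Lemma~\ref{Lemma:AutOfRoot} is met in the hyperelliptic setting. This is harmless: the rank of $\CurL_\CurP$ is $n = r+s-1$, and in the configurations covered by Lemma~\ref{lemma:hyp_latt_similar_to_A} this quantity is at least $2$, so the explicit description of $\mathrm{Aut}(\CurA_n)$ applies without exception.
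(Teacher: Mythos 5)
Your proposal is correct and follows exactly the paper's own argument: cite Lemma~\ref{lemma:hyp_latt_similar_to_A} for similarity to $\CurA_n$, then Lemma~\ref{lemma:similar_lat_iso_aut} to transfer the automorphism group, with Lemma~\ref{Lemma:AutOfRoot} supplying the explicit description $\{\pm\Id\}\times S_{n+1}$. Your additional check that $n\geq 2$ holds in this setting is a small bookkeeping point the paper omits, but it does not change the route.
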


\begin{proof}
   By Lemma \ref{lemma:hyp_latt_similar_to_A}, we know that the function field lattice is similar to 
    $\CurA_n$. Therefore, By Lemma \ref{lemma:similar_lat_iso_aut} means they have isomorphic automorphism groups.
\end{proof}

We note that this result only covers a specific case. 
Next we analyze what happens when $f(x)$ splits into 
linear factors. Recall that by Subsection \ref{eg:HyperellipticFuncFieldLattices} this case in particular is 
interesting because the lattice is generated by 
the vectors of the form $\Phi_\CurP(2P_i-2P_j)$, which have length 
$2$, and one additional vector $\Phi_\CurP(u)$, which has length 
$\sqrt{2g+2}$. The following theorem addresses this case.

\begin{theorem}\label{thm:Auts_Hyp_FF_Lat_fsplits}
    Let $H$ be a hyperelliptic function field with defining 
    equation $y^2=f(x)$, where $f(x)$ is a square-free polynomial of degree $2g+1$, which splits into linear 
    factors. Let $\mathcal{P}$ be the set of ramified places and inert places of of degree $2$ in $H$, with 
    $|\mathcal{P}| = n+1$.
    Then the automorphism group of the function field lattice $\mathrm{Aut}(\mathcal{L}_\mathcal{P})$ 
    has a subgroup isomorphic to 
    $$ S_{2g+2} \times S_s $$
    where $2g+2$ is the number of ramified places and $s$ is the number of inert places of degree $2$.
\end{theorem}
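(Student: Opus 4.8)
The plan is to exhibit an injective homomorphism $S_{2g+2}\times S_s \to \mathrm{Aut}(\mathcal{L}_\mathcal{P})$ realized by coordinate permutations: the factor $S_{2g+2}$ permutes the $2g+2$ coordinates indexed by the ramified places $P_\infty, P_2,\dots,P_{2g+2}$ among themselves, and the factor $S_s$ permutes the $s$ coordinates indexed by the inert places $Q_1,\dots,Q_s$ among themselves. Every coordinate permutation is an isometry of $\mathbb{R}^{n+1}$ stabilizing $\Z^{n+1}$ and the hyperplane $\sum_i v_i = 0$, hence it restricts to a lattice automorphism as soon as it maps $\mathcal{L}_\mathcal{P}$ onto itself. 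So the whole problem reduces to checking that block-permutations of this kind preserve $\mathcal{L}_\mathcal{P}$; injectivity and the product structure will then be routine.

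First I would record a description of $\mathcal{L}_\mathcal{P}$ that does not single out any place. By Subsection \ref{eg:HyperellipticFuncFieldLattices}, in the splitting case $\mathcal{L}_\mathcal{P}$ is generated by the vectors $\Phi_\mathcal{P}(x-\alpha_i)$, $\Phi_\mathcal{P}(x-\beta_j)$ and $\Phi_\mathcal{P}(u)$. Writing $e_P$ for the standard basis vector at the coordinate of a place $P$, the principal divisors $(x-\alpha_i)^F = 2P_i - 2P_\infty$ and $(x-\beta_j)^F = Q_j - 2P_\infty$ show that each vector in the first two families equals $2(e_P - e_{P_\infty})$ for a single non-infinite place $P$ (the factor $\deg(Q_j)=2$ accounting for the $2$ in the inert coordinate). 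Since the vectors $e_P - e_{P_\infty}$ form a basis of $\CurA_n$, these two families together generate $2\CurA_n$, and therefore $\mathcal{L}_\mathcal{P} = 2\CurA_n + \Z\,\Phi_\mathcal{P}(u)$. From $(u)^F = -(2g+1)P_\infty + P_2 + \cdots + P_{2g+2}$, the vector $\Phi_\mathcal{P}(u)$ has odd entries in exactly the $2g+2$ ramified coordinates and zero entries in the inert coordinates.

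The key step is to convert this into a parity condition. Reduction modulo $2$ identifies $\CurA_n/2\CurA_n$ with the sum-zero subspace of $(\Z/2)^{n+1}$, and under this identification the class of $\Phi_\mathcal{P}(u)$ is the indicator vector $\mathbf{1}_{\mathrm{ram}}$ of the ramified coordinates. Thus $\mathcal{L}_\mathcal{P}/2\CurA_n = \langle \mathbf{1}_{\mathrm{ram}}\rangle$ is of order two, and a vector $v\in\CurA_n$ lies in $\mathcal{L}_\mathcal{P}$ if and only if all inert coordinates of $v$ are even and the $2g+2$ ramified coordinates of $v$ all share a common parity. This characterization depends only on the partition of the coordinates into the ramified block and the inert block, so it is preserved by every element of $S_{2g+2}\times S_s$; each such permutation therefore stabilizes $\mathcal{L}_\mathcal{P}$ and defines a lattice automorphism.

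For injectivity, note that $\mathcal{L}_\mathcal{P}$ contains all differences $2(e_P - e_{P'}) = 2(e_P - e_{P_\infty}) - 2(e_{P'} - e_{P_\infty})$; a coordinate permutation fixing every such vector must fix every index, so the homomorphism is injective, and since the two blocks are permuted independently the image is isomorphic to $S_{2g+2}\times S_s$. I expect the only real obstacle to be the coordinate-free reformulation in the third step: the given generators break the symmetry by distinguishing $P_\infty$ and by treating $u$ separately, and the substance of the proof is precisely that, despite this, the lattice is symmetric in all $2g+2$ ramified places (including $P_\infty$) and in all $s$ inert places — which is exactly what the parity description makes manifest.
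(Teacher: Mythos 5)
Your proof is correct, and it takes a genuinely different route from the paper's. The paper argues divisor-theoretically in two stages: permutations of the inert coordinates preserve the lattice because each $Q_i-Q_j$ is the conorm of a principal divisor of $K(x)$, hence principal; for the ramified block it applies the abelian-group machinery of Lemma \ref{lemm:abelian_group_aut_generating_S} to the subgroup $(\mathbb{Z}/2\mathbb{Z})^{2g}\subset\mathrm{Cl}^0(F)$ generated by the classes $[P_i-P_\infty]$, which only yields $S_{2g+1}$ (permutations fixing $P_\infty$), and must then construct by hand an extra transposition swapping the $P_\infty$- and $P_1$-coordinates via a parity-and-conorm argument before assembling $S_{2g+2}\times S_s$. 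You instead pin down the lattice itself: from the generators, $\mathcal{L}_{\mathcal{P}} = 2\mathcal{A}_n + \mathbb{Z}\,\Phi_{\mathcal{P}}(u)$, which is the full preimage in $\mathcal{A}_n$ of the order-two subgroup $\langle \mathbf{1}_{\mathrm{ram}}\rangle$ of $\mathcal{A}_n/2\mathcal{A}_n$; equivalently, $v\in\mathcal{L}_{\mathcal{P}}$ iff $v\in\mathcal{A}_n$, all inert coordinates of $v$ are even, and all $2g+2$ ramified coordinates share one parity (here one uses $\mathcal{A}_n\cap 2\mathbb{Z}^{n+1}=2\mathcal{A}_n$, which holds, so reduction modulo $2\mathcal{A}_n$ really is coordinate-wise parity). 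Invariance under all block permutations, including those moving $P_\infty$, is then immediate --- precisely the step that is ad hoc in the paper --- and your injectivity argument via the vectors $2(e_P-e_{P'})$ is fine. Your coset description buys more than the theorem: it exhibits $\mathcal{L}_{\mathcal{P}}$ as an index-$2^{n-1}$ sublattice of $\mathcal{A}_n$, recovering the determinant $2^{n-1}\sqrt{n+1}=2^{2g+s}\sqrt{n+1}$ of Theorem \ref{thm:Hyp_ell_ff_lat_det_ram} for free, and it is the natural companion to Lemma \ref{lemma:hyp_latt_similar_to_A} (the non-split case $\mathcal{L}_{\mathcal{P}}=2\mathcal{A}_n$). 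What the paper's divisor-theoretic method buys in exchange is that it does not require an explicit coset description of the lattice, so it transfers to settings like Hermitian function field lattices where no such description is available, and it keeps the link between lattice automorphisms and the structure of the divisor class group in view.
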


\begin{proof}
    For this proof we are fundamentally splitting the 
    lattice in two. First we tackle the $s$ coordinates 
    associated with the inert places of degree $2$. 
    Remember that all function field lattices are sublattices 
    of the root lattice $\CurA_n$, which has automorphism group 
    $\{ \pm \Id \} \times S_{n+1}$. Then we can focus on the 
    subgroup $S_s \subset S_{n+1}$ which only permutes the 
    last $s$ coordinates of the vector. We claim that these 
    automorphisms of $\CurA_n$ all define automorphisms on $\mathcal{L}_\mathcal{P}$.

    Let $v \in \mathcal{L}_\mathcal{P}$ and let 
    $\sigma \in S_s$ be an automorphism of $\CurA_n$ which only 
    permutes the last $s$ places. This means $\sigma$ 
    defines a 
    bijection on the places $Q_1, \ldots , Q_s$. We just need to 
    check that $\sigma(v)$ is still in the lattice 
    $\mathcal{L}_\mathcal{P}$. For convenience we 
    will work with the equivalent principle divisor, 
    instead of the vector $v$, 
    $$D = \sum_{i=0}^{2g+1} a_i P_i + 
    \sum_{j=1}^{s} b_j Q_j.$$  
    Then the image of this divisor under $\sigma$ is 
    \begin{align*}
      \sigma(D) &=  \sum_{i=0}^{2g+1} a_i P_i + \sum_{j=1}^{s} 
    b_{\sigma(j)} Q_{\sigma(j)}\\ &= \sum_{i=0}^{2g+1} a_i P_i + \sum_{j=1}^{s} b_j Q_j 
    + \sum_{j=1}^{s} b_{\sigma(j)} (Q_{\sigma(j)}-Q_j).  
    \end{align*}
    
    Because the places $Q_i$ and $Q_j$ are inert and of the same degree, the divisor 
    $Q_i-Q_j$ is the conorm of a principal divisor in $K(x)$ and therefore is principal itself. This implies that  
    $\sigma(D)$ is really equal to 
    $$ \sigma(D) = D + \Tilde{D}$$
    where $\Tilde{D}$ is a principal divisor with support on 
    $\mathcal{P}$. This means $\sigma(D)$ is again a 
    principal divisor with support on $\mathcal{P}$ and 
    therefore a vector of the lattice 
    $\mathcal{L}_\mathcal{P}$.
    This is enough to show that $\sigma$, when 
    restricted to $\mathcal{L}_\mathcal{P}$, is an 
    automorphism.

    Next we focus on the ramified places. Hyperelliptic function field lattices can also be viewed as a type of lattice generated by abelian groups. The natural choice for the group from 
    Definition \ref{defn:lat_from_abelian_groups_and_S} is 
    $\mathrm{Cl}^0(F)$ and we take the set $S$ to be the set 
    of divisor classes $[P_i-P_\infty ]$ for $1\leq i \leq 2g+1$ and 
    $[Q_j- 2 P_\infty ]$ for $1\leq j \leq s$. Then the 
    lattice $\mathcal{L}_\mathcal{P}$ is equivalent to
    
    \begin{align*}
       \mathcal{L}_{\mathrm{Cl}^0(F)}(S) = \Biggl\{
    (a_0, ...&, a_{2g+1}, b_1, ... , b_s) \in A_n \: \mid \: 
    \\
      &\sum_{i=1}^{2g+1} a_i [P_i-P_\infty ]  + \sum_{j=1}^{s} b_j [Q_j- 2 P_\infty ] 
    = [0] \in \mathrm{Cl}^0(F) \Biggr\} . 
    \end{align*}
 
    Again, recall that all of the divisors $Q_j- 2 P_\infty $ are 
    already principal, since they are the conorms of principal divisors in $K(x)$, so the above equation simplifies to 
    \begin{align*}
        \mathcal{L}_{\mathrm{Cl}^0(F)}(S) = 
    \left\{ 
    (a_0, ..., a_{2g+1}, b_1, ... , b_s) \in A_n \: \mid \: 
    \sum_{i=1}^{2g+1} a_i [P_i-P_\infty ]  
    = [0] \in \mathrm{Cl}^0(F) \right\}.
    \end{align*}
    
    Even though $\mathrm{Cl}^0(F)$ seems like the natural choice here, we actually get the exact same lattice 
    if we just use the subgroup generated by the elements 
    $[P_i-P_\infty]$. From \cite{stichtenoth} we know that this group 
    is isomorphic to $(\mathbb{Z}/2\mathbb{Z})^{2g}$, since these divisors all have order two and they fulfill the relation $\sum_{i=1}^{2g+1} [P_i-
    P_\infty ] = [0]$. Therefore 
    this lattice is equivalent to 

    \begin{align*}
        \mathcal{L}_{(\mathbb{Z}/2\mathbb{Z})^{2g}}(S) = 
    \left\{ 
    (a_0, ..., a_{2g+1}, b_1, ... , b_s) \in A_n \: \mid \: 
    \sum_{i=1}^{2g+1} a_i [P_i-P_\infty ]  
    = [0] \in (\mathbb{Z}/2\mathbb{Z})^{2g} \right\}.
    \end{align*}

    Now we can apply Lemma \ref{lemm:abelian_group_aut_generating_S}
    to this lattice. We need to 
    check $\mathrm{Aut}((\mathbb{Z}/2\mathbb{Z})^{2g},S)$, i.e. the permutations 
    of $S$ which extend to automorphisms of $(\mathbb{Z}/2\mathbb{Z})^{2g}$. 
    We claim that all permutations of $S$ can be extended to 
    automorphisms of this group.


    Let $\sigma$ be a permutation in $S_{2g+1}$. We can view 
    $(\mathbb{Z}/2\mathbb{Z})^{2g}$ as a $2g$ dimensional 
    vector space over $ \mathbb{Z}/2\mathbb{Z}$. Then define the map 
    $\varphi : (\mathbb{Z}/2\mathbb{Z})^{2g} \rightarrow 
    (\mathbb{Z}/2\mathbb{Z})^{2g}$ by sending the generating elements 
    $ P_1, ... , P_{2g}$ to
    $ P_{\sigma(i)}, ... , P_{\sigma(2g)}$ 
    respectively. Then the last place $P_{2g+1} = \sum_{i=1}^{2g} P_i$ 
    is mapped to $\sigma(P_{2g+1}) = $
\begin{align*}
    \varphi(P_{2g+1}) = \varphi(\sum_{i=1}^{2g} P_i) = 
    \sum_{i=1}^{2g} P_\sigma(i) = P_{\sigma(2g+1)}.
\end{align*}
    
    Since $\sigma$ is a bijection the map 
    $\varphi$ is a bijective homomorphism and therefore an automorphism of the group.
    This shows that $S_{2g+1}$ is a subgroup of the 
    automorphism group of the lattice.

    Then we can explicitly construct a transposition of the coordinates of the place at infinity $P_\infty$ and another ramified place $P_1$. To this end, define the map $\tau$ on 
    $\R^{2g+2+s}$ to be the linear map which permutes the 
    first two coordinates of the vectors. Since this is already
    an isometry, we only need to show that is sends lattice 
    vectors to lattice vectors. Again let us consider principal 
    divisors instead of vectors for convenience. 
    Let $D = a_0 P_\infty + \sum_{i=1}^{2g+1}a_i P_i +  \sum_{j=1}^{s}b_i Q_i$ be a principal divisor of $F$. 
    Then $\tau(D)$ is equal to 
    \begin{align*}
        \tau(D) &= a_1 P_\infty + a_0 P_1 +\sum_{i=2}^{2g+1}a_i P_i +  \sum_{j=1}^{s}b_i Q_i \\
        &= D + [(a_1-a_0)P_\infty + (a_0-a_1)P_1].
    \end{align*}
    Since $D$ is principal and the places $P_\infty$, $P_1$ are ramified, we 
    know that the integers $a_0$ and $a_2$ are either both even or 
    both odd. This follows from how these divisors are generated, see 
    Subsection \ref{eg:HyperellipticFuncFieldLattices}. 
    Hence $(a_1-a_0)$ and $(a_0-a_1)$ are both even. 
    Therefore the divisor $(a_1-a_0)P_\infty + (a_0-a_1)P_1$ is 
    principal since it is the conorm of the 
    divisor $\frac{1}{2}(a_1-a_0)P_\infty + \frac{1}{2}(a_0-a_1)P_1$ in $K(x)$.
    This implies that $\tau(D)$ is the sum of two principal divisors, 
    making it also principal. We conclude that $\tau$ defines a 
    lattice automorphism of $\mathcal{L}_\mathcal{P}$.
    
    Together, $\tau$ and $S_{2g+1}$ generate a subgroup of the 
    automorphism group which is isomorphic to 
    $S_{2g+2}$. In addition, it is easy to check that the two 
    subgroups $S_{2g+2}$ and $S_s$ intersect trivially.

\end{proof} 

\begin{remark}
    Unlike in the first case, any permutation of a ramified and inert 
    place will not define an automorphism on the lattice.  Due to the inclusion of vectors like $\Phi_\CurP(u)$, it becomes clear that entries in coordinates corresponding to inert places are always even, while those corresponding to ramified places are not. Permuting these would result in a map that is not a bijection on the lattice.
\end{remark}

Let us now take a closer look at the case where the genus $g$ equals 3. As mentioned in previous sections, lattices that are more symmetric or have large kissing numbers typically have larger automorphism groups.

When $g=3$ and the defining polynomial $f(x)$ splits into linear factors, all the generating vectors have the same length. In this case, the function field lattice has a much larger kissing number compared to other lattices from hyperelliptic function fields.

Fortunately, $g=3$ is small enough to be easily implemented by hand in Magma, making it straightforward to check the exact automorphism group.

\begin{example}
    Let $H$ be the hyperelliptic function field over $\mathbb{F}_{11}$, 
    with defining equation $y^2=f(x)$, where
    \begin{align*}
      f(x)= x^7+5x^6+3x^5+9x^4+4x^3+2x^2+9.  
    \end{align*}
    
    Then $H$ has genus $g=3$ and has 
    $8$ ramified places of degree $1$, which are the roots of $f(x)$ and 
    the place at infinity, and two inert places of degree $2$.
    The resulting lattice is of length $10$ and rank $9$ with basis 
    matrix
    $$B = \begin{pmatrix}
        -7&1&1&1&1&1&1&1&0&0\\
        -2&0&2&0&0&0&0&0&0&0\\
        -2&0&0&2&0&0&0&0&0&0\\
        -2&0&0&0&2&0&0&0&0&0\\
        -2&0&0&0&0&2&0&0&0&0\\
        -2&0&0&0&0&0&2&0&0&0\\
        -2&0&0&0&0&0&0&2&0&0\\
        -2&0&0&0&0&0&0&0&2&0\\
        -2&0&0&0&0&0&0&0&0&2
    \end{pmatrix}$$
    With a given basis matrix it is easy to implement the lattice in 
    magma and compute the size of its automorphism group.


    \begin{Verbatim}[frame=single, tabsize=5]
    > L := Lattice(10, [-7,1,1,1,1,1,1,1,0,0, -2,0,2,0,0,0,0,0,0,0,
    -2,0,0,2,0,0,0,0,0,0, -2,0,0,0,2,0,0,0,0,0, -2,0,0,0,0,2,0,0,0,
    0, -2,0,0,0,0,0,2,0,0,0, -2,0,0,0,0,0,0,2,0,0, -2,0,0,0,0,0,0,
    0,2,0, -2,0,0,0,0,0,0,0,0,2]);
    > G := AutomorphismGroup(L);
    > FactoredOrder(G);
    [ <2, 9>, <3, 2>, <5, 1>, <7, 1> ]
	\end{Verbatim}

    The result is that this lattice has an automorphism group of size 
    $2\cdot 8! \cdot 2!$. By Theorem \ref{thm:Auts_Hyp_FF_Lat_fsplits} the group 
    is therefore isomorphic to $\{ \pm \Id \} \times S_8 \times S_2$.
    This is rather surprising, since we would expect that, if any of the 
    hyperelliptic function field lattices had a larger automorphism 
    group, it would be this one. 
    
\end{example}

\begin{center}
    \subsection*{Acknowledgments}
\end{center}
We acknowledge Renate Scheidler for her guidance on semi-reduced divisors in algebraic function fields, which made an important contribution to the results in Section \ref{sec:Determinant}.




\end{document}